\documentclass[12pt,oneside,english,reqno]{amsart}
\usepackage[T1]{fontenc}
\usepackage[utf8]{inputenc}
\usepackage[a4paper]{geometry}
\geometry{verbose,lmargin=2cm,rmargin=2cm, tmargin=2cm, bmargin=2cm}
\usepackage{mathrsfs}
\usepackage{amstext}
\usepackage{amsthm}
\usepackage{amssymb}
\usepackage{color}
\usepackage[dvipsnames]{xcolor}
\usepackage[normalem]{ulem}

\linespread{1.05}
\setlength{\parskip}{3.0pt}
\usepackage[colorlinks,linkcolor=blue, urlcolor  = blue,citecolor=blue,pagebackref,hypertexnames=false, breaklinks]{hyperref}
\usepackage{dsfont}
\usepackage{enumerate}
\usepackage{verbatim} 
\usepackage{stmaryrd}
\usepackage{enumitem}
\usepackage{graphicx}
\usepackage{todonotes}
\usepackage{xcolor}
\usepackage{cleveref}

\usepackage{shuffle}

\usepackage{caption}
\usepackage{subcaption}

\usepackage{xstring}
\newcommand{\perm}[1]{%
  \textcolor{black}{%
  \mathtt{%
  [%
    \StrLen{#1}[\stringLength]
    \foreach \n in {1,...,\stringLength}{
      \StrChar{#1}{\n}
      \ifnum\n<\stringLength
      \,
      \fi
    }%
  ]%
  }%
}%
}

\makeatletter
\numberwithin{equation}{section}
\numberwithin{figure}{section}
\theoremstyle{plain}
\newtheorem{thm}{\protect\theoremname}[section]
\theoremstyle{definition}
\newtheorem{defn}[thm]{\protect\definitionname}
\newtheorem{nota}[thm]{Notation}
\theoremstyle{remark}
\newtheorem{rem}[thm]{\protect\remarkname}
\theoremstyle{plain}
\newtheorem{lem}[thm]{\protect\lemmaname}
\theoremstyle{plain}
\newtheorem{prop}[thm]{\protect\propositionname}
\theoremstyle{plain}

\theoremstyle{plain}
\newtheorem{example}[thm]{\protect\examplename}

\makeatother

\usepackage{babel}
\providecommand{\corollaryname}{Corollary}
\providecommand{\definitionname}{Definition}
\providecommand{\lemmaname}{Lemma}
\providecommand{\propositionname}{Proposition}
\providecommand{\remarkname}{Remark}
\providecommand{\theoremname}{Theorem}
\providecommand{\examplename}{Example}


\newcommand{\hdd}{\hat{\mathrm{d}}}
\newcommand{\hw}{\hat{w}}
\newcommand{\id}{\mathsf{id}} 

\newcommand{\ott}{[0,T]}

\newcommand{\sh}{\textnormal{Sh}}
\newcommand{\sq}{\square}

\newcommand{\cA}{\mathcal{A}}

\newcommand{\cC}{\mathcal{C}}

\newcommand{\cF}{\mathcal{F}}
\newcommand{\cG}{\mathcal{G}}

\newcommand{\cL}{\mathcal{L}}

\newcommand{\cT}{\mathcal{T}}

\newcommand{\cW}{\mathcal{W}}



\newcommand{\NN}{\mathbb{N}}

\newcommand{\RR}{\mathbb{R}}

\def\dsqcup{\sqcup\mathchoice{\mkern-7mu}{\mkern-7mu}{\mkern-3.2mu}{\mkern-3.8mu}\sqcup}


\newcommand{\br}{\mathbf{r}}
\newcommand{\bs}{\mathbf{s}}
\newcommand{\bS}{\mathbf{S}}
\newcommand{\bt}{\mathbf{t}}
\newcommand{\bT}{\mathbf{T}}

\newcommand{\bv}{\mathbf{v}}

\newcommand{\boo}{\mathbf{0}}



\newcommand{\ssi}{\Sigma}
\newcommand{\si}{\sigma}

\newcommand{\lp}{\left(}
\newcommand{\rp}{\right)}
\newcommand{\lc}{\left[}
\newcommand{\rc}{\right]}
\newcommand{\lcl}{\left\{}
\newcommand{\rcl}{\right\}}

\newcommand{\lla}{\left\langle}
\newcommand{\rra}{\right\rangle}



\newcommand{\cdg}{\color{dg}}

\definecolor{dg}{rgb}{0, 0.5, 0}
\definecolor{dp}{rgb}{0.50, 0, 0.40}

\newcommand{\la}{\langle}
\newcommand{\ra}{\rangle}

\newcommand{\sym}{\mathrm{Sym}}

\newcommand{\eps}{\varepsilon}
\newcommand{\dd}{\mathop{}\!\mathrm{d}}

\setcounter{tocdepth}{2}

\begin{document}


\title[On the signature of an image]{On the signature of an image}


\author{Joscha Diehl \and Kurusch Ebrahimi-Fard \and Fabian N.~Harang \and Samy Tindel}

\address{Joscha Diehl: Universit\"at Greifswald,
Institut fur Mathematik und Informatik, Walther-Rathenau-Str. 47, 17489 Greifswald }
\email{joscha.diehl@uni-greifswald.de}

\address{Kurusch Ebrahimi-Fard: Department of Mathematical Sciences, Norwegian University of Science and Technology (NTNU), 7491 Trondheim, Norway. Centre for Advanced Study (CAS), 0271 Oslo, Norway.}
\email{kurusch.ebrahimi-fard@ntnu.no}
\urladdr{https://folk.ntnu.no/kurusche/}

\address{Fabian A. Harang: Department of Economics, BI Norwegian Business School, Handelshøyskolen BI, 0442, Oslo, Norway.}
\email{fabian.a.harang@bi.no}

\address{Samy Tindel, Department of Mathematics, 
Purdue University, 150 N. University Street, West Lafayette, IN 47907, USA}
\email{stindel@purdue.edu}
\urladdr{https://www.math.purdue.edu/~stindel/}

\date{\today}


\begin{abstract}
Over the past decade, the importance of the 1D signature which can be seen as a functional defined along a path, has been pivotal in both path-wise stochastic calculus and the analysis of time series data. By considering an image as a two-parameter function that takes values in a $d$-dimensional space, we introduce an extension of the path signature to images. We address numerous challenges associated with this extension and demonstrate that the 2D signature satisfies a version of Chen's relation in addition to a shuffle-type product. Furthermore, we show that specific variations of the 2D signature can be recursively defined, thereby satisfying an integral-type equation. We analyze the properties of the proposed signature, such as continuity, invariance to stretching, translation and rotation of the underlying image. Additionally, we establish that the proposed 2D signature over an image satisfies a universal approximation property.  
\end{abstract}

\keywords{Signatures, rough paths theory, random fields, feature extraction, image analysis}

\thanks{\emph{AMS 2020 Mathematics Subject Classification:} Primary: 60L10, 60L20, 60L70; Secondary: 60L90. \\
\emph{Acknowledgments.} 
All authors are grateful to the Center for Advanced Studies (CAS) in Oslo for funding the "Signatures for Images" project over the academic year 2023/2024, during which the writing of this article happened.
JD was supported by a Trilateral ANR-DFG-JST AI program, DFG Project 442590525. }


\maketitle

\tableofcontents
\addtocontents{toc}{~\hfill\textbf{Page}\par}



\section{Introduction}
\label{sec:intro}

To enhance the accuracy of representations in data analysis, it is often important to enrich the original data using nonlinearities. 
In much of the current literature on data processing, neural networks are often trained without explicit guidance on determining the appropriate type of nonlinearity to be added. Nevertheless, it may be more efficient to begin with a natural and easily interpretable notion of nonlinearity within the relevant context.

With this general understanding, the concept of a 1D signature%
\footnote{In this work
1D and 2D will correspond to
the \emph{parameter} space.
All curves or fields (or membranes)
will have arbitrary ambient dimension $d$.
}
defined over a curve has proven to be crucial in both path-wise stochastic calculus and the analysis of time series data. For a given ambient dimension $d \ge 1$ and interval $\ott$, let us briefly recall the definition of the signature of a differentiable path 
\begin{equation*}
x=\lcl
    x_{t}^{i}\ | \, i=1,\ldots,d, \, t\in\ott
\rcl.
\end{equation*}
To achieve this, let us initially define the set of words on a finite alphabet.

\newcommand\e{\mathsf{e}}
\begin{defn}
\label{def:words}
Let $\cA=\{1,\ldots, d\}$ be an alphabet. Denoting the collection of words with $n$ letters
\begin{equation}
\label{a0}
\cW_{n} :=
\lcl
w=(i_{1},\ldots,i_{n})\ | \, 
n\ge 0 \text{ and } i_{j}\in\cA \text{ for all } j=1,\ldots, n
\rcl,
\end{equation}
where the case $n=0$ corresponds to the empty word $\e$, we define the set of all words over $\cA$ as $\cW := \cup_{n\ge 0} \cW_{n}$ ($\cW' := \cW - \{\e\}$). The length of a word $w=(i_{1},\ldots,i_{m})$ is denoted $|w|=m$.
\end{defn}

\noindent
Now one can define the signature $S(x)$ as an infinite dimensional object in $(\cC(\Delta_{0,T}^{2}))^{\cW}$, where the simplex
$$
\Delta_{0,T}^{2}:=\{(s,t)\ | \,0\le s \le t \le T\},
$$
by specifying recursively the projections $\la  S(x), \, w \ra$ for every word $w\in\cW$. Namely, we set $\lla S_{st}(x) , \, \e  \rra :=1$ and if  $w=(i_{1},\ldots,i_{n})\in\cW'$, then we define
\begin{equation}
\label{a1}
\lla S_{st}(x) , \, w  \rra
:=
\int_{s}^{t} \lla S_{sr}(x) , \, (i_{1},\ldots,i_{n-1})  \rra \, \dd x_{r}^{i_{n}} \, .
\end{equation}

The rich structure enjoyed by $S(x)$ has first been emphasized by the mathematician K.~T. Chen~\cite{Ch} within an algebraic context. Subsequently, the concept of signature was imported by T.~Lyons \cite{Ly} to the realm of stochastic analysis, leading to the development of the fruitful theory of rough paths. Among the properties which have made the notion of path signature so insightful, let us mention the following ones:

\begin{enumerate}[wide, labelwidth=!, labelindent=0pt, label= \textbf{(\roman*)}]
\setlength\itemsep{.05in}

\item
Algebraic properties like Chen's relation as well as shuffle products. This yields a wealth of structures, such as Lie groups and algebras, as well as Hopf algebras, providing a convenient mathematical framework for studying the path signature $S(x)$.

\item
Regularity properties in terms of H\"older continuity and $p$-variations.

\item
Ability to expand solutions of differential equations in terms of $S(x)$, leading to pathwise analysis of noisy equations.

\item
The fact that a generic curve can be recovered from its signature, up to tree-like equivalence.

\end{enumerate}

\noindent
We refer e.g.~to \cite{FH,FV,LCL} for more details about those properties. In any case, items (i)-(iv) above have been a sufficient motivation to consider signatures as proper features for classification tasks related to curves. See e.g.~references \cite{ChevyKrom2016,LyonsMcL2022}.  Notice the special importance of (iv) above in this regard.  In the past decade, there has been a significant upswing in the importance of utilizing signature methods in diverse data analysis applications. This trend extends across various domains, encompassing Chinese character recognition \cite{Graham2013}, topological data analysis \cite{ChevNanObe2018}, and the development of signature-based machine learning models for psychiatric diagnosis \cite{PerGoGedLySa2018,WuGoodLySau2022}. 

Based on those preliminary remarks, it is rather natural to ask if appropriate generalizations of the notion of signature can shed new light on the analysis of images. Indeed, even if nonlinear features have been a prominent fixture of image processing in the recent past (see e.g.~\cite{SM}), those features are often defined in a somewhat arbitrary fashion. Moreover, one may remark that a precise and transparent algebraic structure, which is useful for basic analysis as well as efficient implementation of those objects, is also lacking in~\cite{SM} and related papers. Hence, there exists a compelling need for an appropriate extension of the signature concept applicable to 2D-indexed $\mathbb{R}^d$-valued fields. Specifically, our objective is to emulate at least some of the desirable properties (i)-(iv) mentioned earlier, bridging the gap in the current literature and fostering a more comprehensive understanding of image analysis.

We briefly mention earlier work where the concept of 2D signature is touched. In \cite{horozov2015noncommutative}, Horozov explores a generalization of Chen's iterated integrals from paths to membranes, i.e., 2D surfaces or fields in our parlance. He describes a shuffle identity which is similar to ours. However, we note two major differences. First,  Horozov integrates first-order differentials. Second, he works in ambient dimension $2$ which implies homotopy invariance, something that is not true in our more general setting. 
Lee et al. \cite{giusti2022topological,lee2023random} pursue
an abstract algebraic-geometric approach, which in the latter reference 
is built around a categorical generalization of Chen's identity to surfaces. In contrast to their works, our proposed signature is constructed from mixed partial differentials of the underlying field, as motivated from multiplicative noisy controls seen in the field of SPDEs. This approach creates different properties of the resulting signature, which will be discussed in depth in the next paragraph. 
In \cite{diehl2022two}, a two-parameter version
of the iterated-sums signature \cite{diehl2020time,diehl2022tropical}
is elaborated.
Finally, in reference \cite{ibrahim2022imagesig} images are considered
as time-series, by going through it ``line by line''.

To extend the path signature \eqref{a1} to surfaces or fields indexed by pairs of parameters from the square $\ott \times \ott$, an initial approach that naturally comes to mind is to employ concepts from rough pathwise calculus within the two-dimensional plane (as introduced in~\cite{CG,CT}). In reference~\cite{ZLT}, the fourth author together with collaborators discretized part of the two-parameter analog of the 1D signature considered in~\cite{CG,CT}. A low (that is $15$-)dimensional set of features was obtained, with good classification performances for a standard set of textures. This preliminary result provided sufficient encouragement to delve deeper into that particular direction.

\smallskip

In the broader context outlined, the present contribution seeks to establish a conceptually straightforward definition of the signature for 2D-indexed random fields. A remark is in order regarding item (iii) above. Our point of view is that the full power of rough calculus in the plane (as exhibited in~\cite{CG,CT}) might entail a level of generalization of the signature that could prove overly intricate for the specific objectives of data analysis. Therefore, referring to items (i)-(iv) above, we shall de-emphasize item (iii) in favor of prioritizing items (i)-(ii) and (iv). We assert that this emphasis adjustment is effectively realized through the consideration of the various 2D signatures described below. Indeed, we introduce three candidates for 2D signatures over a field $X=X(s,t)$: 2D $\id$-signature, $\bS^{\id}(X)$, the full 2D signature, $\bS(X)$, and the symmetrised full 2D signature, $\bS^{\text{sym}}(X)$.

Let us conclude the introduction with a motivation for the coming analysis through a brief overview of some of the main properties that we will prove to hold for the proposed full signature $\bS(X)$:
\begin{enumerate}[wide, labelwidth=!, labelindent=0pt, label= \textbf{(\roman*)}]
\setlength\itemsep{.05in}

    \item The full 2D signature $\bS(X)$ over a field $X:[0,T]^2 \rightarrow \RR^d $ is invariant to translation by constants, but also by 1-parameter paths! This shows that our proposed signature can really be seen as a complementary feature to the classical path signature, and that the 2D signature indeed captures multi-directional relative change in the signal.
    
    \item The full 2D signature of 90-degree rotations of a field $X$ can be expressed in the  2D signature of $X$ itself (up to multiplication by $-1$). So, while we cannot state that the  2D signature is rotation invariant, all necessary information related to 90-degree rotations is contained in the signature.

    \item
    Already in the case of one-dimensional ambient space of the field,
    the proposed full 2D signature is non-trivial
    (i.e.~not equivalent to polynomials in the increment). 
    This is in contrast to the path signature
    as well as the 
    two-parameter
    signatures presented in 
    \cite{giusti2022topological,lee2023random}.

    \item  The full 2D signature is a continuous functional.

    \item Certain terms in the full 2D signature can be used in expressing solutions to certain hyperbolic PDEs, known as Goursat equations,  with multiplicative noise. 

    \item The full 2D signature satisfies a variant of the well-known shuffle relation, and thus, the linear span of 2D signature elements form an algebra.  

    \item The full 2D signature is universal, in the sense that linear combinations of signature terms approximate continuous functionals on the space of $C^2$ fields arbitrarily well. 

    \item The full 2D signature satisfies a Chen-type relation, where a convolution product is involved. Through certain symmetrization arguments in the integrand, similar to what is used in the Wiener--It\^o chaos expansion, one obtains multiplicative Chen-type relations in the horizontal or vertical direction of an image. 
\end{enumerate}

While our work provides new results related to the construction of 2D signature, in addition to a detailed overview of the challenges involved in this extension, we also provide discussions of several open problems and challenges associated with this structure. 

\medskip

The paper is organised in the following Sections:
\begin{itemize}
    
    \item[\ref{sec:prelim}]  We recall some of the basic structures of the path signature.
    
    \item[\ref{sec:planecalc}] We develop notation, discuss multiparameter integral operators, and provide an overview of calculus in the plane.
    
    \item[\ref{sec:2dsignature}]
    We propose a 2D signature over a two-parameter, $\RR^d$-valued field.
    In fact, starting from the expansion of certain PDEs, we obtain
    an object (the $\id$-signature). Chen's relation is discussed in this  2D setting. However, we will show that the $\id$ signature lacks some of the desired
    properties. 
    This motivates the definition of a full 2D signature,
    defined over a larger word shuffle algebra. 
    Computations are presented to exemplify the properties of the full 2D signature.

    \item[\ref{sec:symetrizedSig}]  We symmetrize the integrand, and observe that this new signature type feature satisfies Chen's relation.
    
    \item[\ref{sec:Funcapprox}] A proof is given, that the 2D signature characterizes the field.
    
    \item[\ref{sec:conclussion}] We provide a few concluding remarks and discuss open problems and future progress.
    
\end{itemize}


\section{Linear ODEs and the 1D signature}
\label{sec:prelim}

In this section, we recall how signatures for curves parametrized over an interval $\ott$ can be motivated in terms of linear ordinary differential equations.
We first show how the signature $S(x)$ in~\eqref{a1} emerges from $\RR^{e}$-valued differential equations.
Then we identify the signature itself as the solution of a linear equation in an infinite dimensional space. Those notions will be generalized later to a multi-parametric setting.


\subsection{Motivation from real-valued differential equations}
\label{ssec:motivation}

Signatures of curves parametri\-zed over $\ott$ appear naturally in the context of linear fixed point equations (e.g., see~\cite{HT} for this perspective in the context of Volterra equation).

\begin{lem}
\label{lem:linear-eq-1d}
Let $x:\ott\to\RR^{d}$ be a $\cC^{1}$ signal and let $\{A^{i}\ | \, i=1,\ldots,d\} \subset \RR^{e\times e}$ be a collection of constant matrices with real entries. We consider the $\RR^{e}$-valued solution $y$ to the linear equation 
\begin{equation}\label{c1}
y_{st}:=y_t-y_s
=
\sum_{i=1}^{d} \int_{s}^{t} A^{i}  y_{r}\, \dd x_{r}^{i},
\end{equation}
for $(s,t)\in\Delta^{2}_{0,T}$, with a given initial condition $y_s \in\RR^{d}$. Recalling \eqref{a0}, we set
\begin{equation}\label{c11}
A^{\circ w}
:=
A^{i_{n}} \cdots A^{i_{1}},
\quad\text{for}\quad
w=(i_{1},\ldots,i_{n}) \in \cW',
\end{equation}
and $A^\e$ is the $e \times e$ identity matrix $\mathbf{1}_e$.
Then equation \eqref{c1} can be expanded as
\begin{equation}
\label{eq:linear-eq-series-expansion}
y_{t}
=
\sum_{w\in\cW} A^{\circ {w}} y_{s} \lla S_{st}(x) ,  w  \rra ,
\end{equation}
where the quantity $\la S_{st}(x) ,  w  \ra$ was introduced in \eqref{a1}.
\end{lem}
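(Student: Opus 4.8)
The plan is to show that the right-hand side of \eqref{eq:linear-eq-series-expansion} solves the integral form of \eqref{c1} and then to conclude by uniqueness. Recast \eqref{c1} as the fixed-point equation $y_{t}=y_{s}+\sum_{i=1}^{d}\ist A^{i}y_{r}\,\dd x_{r}^{i}$ on the simplex $\Delta_{0,T}^{2}$, and introduce the candidate
\[
Y_{t}:=\sum_{w\in\cW} A^{\circ w}\,y_{s}\,\lla S_{st}(x),\,w\rra .
\]
The argument splits into an analytic part (convergence of this series) and an algebraic part (the fixed-point identity), after which uniqueness identifies $Y_t$ with $y_t$.

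First I would establish that the series defining $Y_{t}$ converges absolutely and uniformly over $\Delta_{0,T}^{2}$. The crucial ingredient is the factorial bound
\[
\abs{\lla S_{st}(x),\,w\rra}\le \frac{1}{|w|!}\Big(\ist \abs{\dot x_{r}}\,\dd r\Big)^{|w|},
\]
proved by induction on $|w|$ from the recursion \eqref{a1}: writing $\phi(r)=\int_{s}^{r}\abs{\dot x_{u}}\,\dd u$, the inductive step is the elementary identity $\int_{s}^{t}\tfrac{1}{(n-1)!}\phi(r)^{n-1}\phi'(r)\,\dd r=\tfrac{1}{n!}\phi(t)^{n}$. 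Combining this with the crude estimate $\norm{A^{\circ w}}\le M^{|w|}$, where $M=\max_{i}\norm{A^{i}}$, and with the fact that there are exactly $d^{n}$ words of length $n$, one gets $\sum_{w\in\cW}\norm{A^{\circ w}}\,\abs{\lla S_{st}(x),\,w\rra}\le\sum_{n\ge0}\frac{(dML)^{n}}{n!}=e^{dML}<\infty$, with $L=\iott\abs{\dot x_{r}}\,\dd r$. This absolute and uniform convergence is what legitimizes every manipulation below, and I expect it to be the principal obstacle of the proof: the reindexing step is purely formal, but without it neither the term-by-term integration nor the identification with the unique solution is justified.

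Next I would verify the fixed-point identity. Inserting the series $y_{r}=Y_{r}$ into the integral term and interchanging summation and integration (justified by the uniform convergence just obtained, together with continuity of $\dot x^{i}$), yields
\[
\sum_{i=1}^{d}\ist A^{i}Y_{r}\,\dd x_{r}^{i}=\sum_{i=1}^{d}\sum_{v\in\cW} A^{i}A^{\circ v}\,y_{s}\,\ist \lla S_{sr}(x),\,v\rra\,\dd x_{r}^{i}.
\]
The key observation is that for the word $w=(j_{1},\ldots,j_{m},i)$ obtained by appending the letter $i$ to $v=(j_{1},\ldots,j_{m})$, definition \eqref{c11} gives $A^{i}A^{\circ v}=A^{\circ w}$, while the recursion \eqref{a1} gives $\ist\lla S_{sr}(x),\,v\rra\,\dd x_{r}^{i}=\lla S_{st}(x),\,w\rra$. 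As the pair $(v,i)$ ranges over $\cW\times\cA$, the word $w$ ranges bijectively over $\cW'$, so the double sum collapses to $\sum_{w\in\cW'}A^{\circ w}\,y_{s}\,\lla S_{st}(x),\,w\rra$. Adding the empty-word contribution $A^{\circ\e}y_{s}\lla S_{st}(x),\,\e\rra=y_{s}$ recovers $Y_{t}$, so $Y_{t}=y_{s}+\sum_{i=1}^{d}\ist A^{i}Y_{r}\,\dd x_{r}^{i}$ with $Y_{s}=y_{s}$; that is, $Y$ solves the integral equation with the prescribed initial datum.

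Finally, since \eqref{c1} is linear with $\cC^{1}$ driver $x$, a standard Grönwall (equivalently Picard--Lindel\"of) argument gives uniqueness of the solution on $\ott$. Hence $y_{t}=Y_{t}$, which is precisely \eqref{eq:linear-eq-series-expansion}. As an alternative route one could run the Picard iteration $y^{(0)}_{t}=y_{s}$, $y^{(n+1)}_{t}=y_{s}+\sum_{i}\ist A^{i}y^{(n)}_{r}\,\dd x_{r}^{i}$ and prove by induction that $y^{(n)}_{t}=\sum_{|w|\le n}A^{\circ w}y_{s}\lla S_{st}(x),\,w\rra$; the same factorial bound then shows these partial sums converge to both the Picard limit and the series, giving the claim without separately invoking uniqueness.
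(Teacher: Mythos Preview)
Your proof is correct but proceeds along a different line than the paper. The paper argues by iterative expansion: it substitutes $y_{r_1}=y_s+y_{sr_1}$ into the integral, peels off the leading signature term, and carries the remainder $R^1_{st}$ forward, then repeats. This is essentially the Picard iteration you mention at the end as an alternative, and it has the pedagogical virtue of showing how the iterated integrals $\langle S_{st}(x),w\rangle$ arise mechanically from the equation itself. Your main route instead postulates the series, checks it solves the fixed-point equation via the recursion \eqref{a1} and the identity $A^iA^{\circ v}=A^{\circ(v,i)}$, and then invokes uniqueness. This buys you a cleaner handling of convergence---the paper is rather casual on this point, simply asserting that iteration yields the claim---whereas your factorial bound makes the analytic content explicit. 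Either route is perfectly adequate here; yours is arguably the more complete write-up.
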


\begin{proof}
We will provide a brief outline of the proof here,  establishing the foundation for the forthcoming exploration of the two-parameter case. Employing Einstein's summation convention for repeated upper indices, equation~\eqref{c1} transforms into:
\begin{equation*}
y_{st}
=
\int_{s}^{t} A^{i_{1}}  y_{r_{1}}\, \dd x_{r_{1}}^{i_{1}} \, ,
\quad\text{for}\quad (s,t)\in\Delta^{2}_{0,T} .
\end{equation*}
Replacing $y_{r_{1}}$ by $y_{sr_{1}} + y_{s}$ on the righthand side yields
\begin{equation}\label{c2}
y_{st}
=
A^{i_{1}} y_{s} \int_{s}^{t} \dd x_{r_{1}}^{i_{1}} + R_{st}^{1} \, ,
\quad\text{where}\quad
R_{st}^{1}
:=
\int_{s}^{t} A^{i_{1}}y_{sr_{1}}  \, \dd x_{r_{1}}^{i_{1}} .
 \end{equation}
Repeating this step, the increment $y_{sr_{1}}$ in the remainder term $R_{st}^{1}$ can be further expanded
\begin{equation*}
y_{sr_{1}}
=
A^{i_{2}}y_{s}  \int_{s}^{r_1} \dd x_{r_{2}}^{i_{2}} 
+
\int_{s}^{r_1} A^{i_{2}} y_{sr_{2}} \, \dd x_{r_{2}}^{i_{2}} \, .
\end{equation*}
Plugging this into~\eqref{c2}, we end up with
\begin{equation}\label{c3}
y_{st}
=
 A^{i_{1}}y_{s} \int_{s}^{t} \dd x_{r_{1}}^{i_{1}} 
+
A^{i_{1}} A^{i_{2}}y_{s} \int_{s\le r_{2}< r_{1} \le t} \dd x_{r_{2}}^{i_{2}} \dd x_{r_{1}}^{i_{1}}
+
R_{st}^{2}.
\end{equation}
The expression of the remainder $R_{st}^{2}$ follows the same procedure. In addition, observe that invoking our conventions \eqref{a1} and \eqref{c11}, one can recast~\eqref{c3} as
\begin{equation*}
y_{st}
=
A^{\circ (i_{1})} y_{s} \lla S_{st}(x) ,  (i_{1})  \rra
+
A^{\circ (i_{2},i_{1})} y_{s} \lla S_{st}(x) ,  (i_{2},i_{1})  \rra
+
R_{st}^{2} .
\end{equation*}
Our claim~\eqref{eq:linear-eq-series-expansion} follows
from iterating this procedure.
\end{proof}


\subsection{The algebra of signatures}
\label{ssec:Sigalg}

Following \cite{FV}, we start this subsection by recalling the algebraic and geometric setting for 1D signatures indexed by a single parameter. This will allow us to provide a linear differential equation which governs the signature of a path. We will try to replicate this linear equation for images in the next sections. 

We begin by recalling the definition of tensor algebra over the space $\mathbb{R}^{d}$ 
\begin{equation*}
\cT(\mathbb{R}^{d}):=\bigoplus_{n=0}^{\infty}(\mathbb{R}^{d})^{\otimes n},
\end{equation*}
with $(\mathbb{R}^{d})^{\otimes
0}=\mathbb{R}\mathbf{1}$. It is equipped with a product $\otimes$ defined for any $g,h\in
\cT(\mathbb{R}^{d})$ by
\begin{equation}
\label{tensorprod}
\lc g\otimes h\rc^{n}=\sum_{k=0}^{n}[g]^{n-k}\otimes [h]^{k},
\end{equation}
where $[g]^{n} \in (\mathbb{R}^{d})^{\otimes n}$ designates the projection onto the $n$-th tensor level, making $\cT(\mathbb{R}^{d})$ an associative and non-commutative algebra with unit $\mathbf{1} \in (\mathbb{R}^{d})^{\otimes 0}$. 

The space $\cT(\mathbb{R}^{d})$ can be equipped with the commutative shuffle product defined inductively, i.e., $w \shuffle \mathbf{1} = \mathbf{1} \shuffle w = w$ for any $w \in \cT(\mathbb{R}^{d})$ and for $u \otimes w_1$, $v \otimes w_2$, $u,v \in (\mathbb{R}^{d})^{\otimes 1}$, $w_1,w_2 \in \cT(\mathbb{R}^{d})$, we define
\begin{equation}\label{eq:shuffle}
        (u \otimes w_1) \shuffle (v \otimes w_2)
    := u \otimes (w_1 \shuffle (v \otimes w_2))
    +  v \otimes ((u \otimes w_1) \shuffle w_2).
\end{equation}
For example, shuffling two letters, $u$ and $v$, gives 
$$
    u \shuffle v = u \otimes v + v \otimes u.
$$
See \Cref{def:shuffle} 
for an explicit formulation of shuffle product in terms of permutations. Further below (see Remark \ref{not:shuffle-on-words}), we will use word notation to denote elements in $\cT(\mathbb{R}^{d})$, i.e., if $e_{1},\ldots,e_{d}$ denotes the canonical basis of $\RR^{d}$, then we use $(i_{1},\ldots,i_{n})\in\cW$ to denote $e_{i_{1}}\otimes\cdots\otimes e_{i_{n}} \in \cT(\mathbb{R}^{d})$.%

The space of tensor series is denoted $\cT((\mathbb{R}^{d}))$. It contains linear maps $F:=\sum_{w \in \cW}f_w e_w$ from $\cT(\mathbb{R}^{d})$ to $\mathbb{R}$, where 
\begin{equation}
\lcl
e_{w}=e_{i_{1}}\otimes\cdots\otimes e_{i_{n}} \ ; \, w=(i_{1},\ldots,i_{n})\in\cW
\rcl
\end{equation} 
such that for $v \in \cW$, we define $F(v)=\sum_{w \in \cW}f_w \la e_{w} , v \ra =f_v$. The tensor product \eqref{tensorprod} can be extended to $\cT((\mathbb{R}^{d}))$ making it an unital algebra, with unit  $\mathbf{1}$ (in place for the empty word).

A continuous map $\Phi:\Delta_{a,b}^{2}\rightarrow \cT((\mathbb{R}^{d}))$ is called a multiplicative functional if 
$$
\Phi_{st} =\Phi_{su}\otimes\Phi_{ut}
,$$ 
for $s<u<t$. The signature $S(x)$ of a path $x$ with finite variation is a particular example of such a multiplicative functional. Indeed, recalling~\eqref{a1} we can embed $\la S(x), w \ra$ by setting
\begin{equation}
\label{c4}
S_{st}(x) = \mathbf{1} + \sum_{w\in\cW'} \la S_{st}(x), w \ra \, e_{w} \in \cT((\RR^{d})).
\end{equation}
In addition, it can be shown that $S_{st}(x)$ lives in a subset $G(\mathbb{R}^{d}) \subset \cT((\mathbb{R}^{d}))$ consisting of so-called group-like elements
\begin{equation*}
\mathcal{G}(\mathbb{R}^{d}) = \exp^\otimes\bigl(\mathcal{L}((\mathbb{R}^d))\bigr),
\end{equation*}
where $\mathcal{L}((\mathbb{R}^d))$ denotes the set of all Lie series over $\mathbb{R}^{d}$. Hence, every element in $\mathcal{G}(\mathbb{R}^{d})$ is the tensor-exponential of a Lie series.
Regarding the latter, recall that, denoting by $\mathcal{L}(\mathbb{R}^{d})$ the free Lie algebra generated by $\mathbb{R}^{d}$, an element in $\cT((\mathbb{R}^{d}))$ is a Lie series if it can be written in the form $\sum_{n \geq 1} p_n$, where each Lie polynomial $p_n \in \mathcal{L}(\mathbb{R}^{d})$ has support in $\cW_n$.   

As seen above, the notion of signature can be motivated by linear differential equations. We now state a differential/integral equation in $\mathcal{G}(\mathbb{R}^{d})$ for $S_{st}(x)$ itself. It should be seen as a simple non-truncated version of \cite[Prop.~7.8]{FV}.

\begin{prop}\label{prop:path sig eq}
For $(s,t) \in \Delta_{a,b}^{2}$, the signature $S_{st}(x)$ defined by \eqref{c4} solves a linear integral fixed point equation in $\cT((\RR^d))$ of the form
\begin{equation}\label{eq:path Sig diff}
S_{st}(x)
= \mathbf{1}+
 \int_{s}^{t} S_{sr}(x) \otimes  \dd x_{r}.
\end{equation}
\end{prop}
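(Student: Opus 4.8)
The plan is to prove the identity \eqref{eq:path Sig diff} by testing it against every word $w\in\cW$, since an element of $\cT((\RR^d))$ is completely determined by its projections $\la\,\cdot\,,w\ra$. First I would fix the meaning of the right-hand side: the $\cT((\RR^d))$-valued integral $\int_s^t S_{sr}(x)\otimes\dd x_r$ is to be read coefficient-wise, that is, level by level. This is legitimate because for each fixed $w$ the scalar integrand $r\mapsto\la S_{sr}(x),u\ra$ appearing below is continuous in $r$, while $x$ is $\cC^1$ (in particular of finite variation), so every scalar integral is well defined.

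Next I would expand the integrand. Writing $\dd x_r=\sum_{i=1}^d e_i\,\dd x_r^i$, using $\mathbf 1=e_\e$, and denoting by $u\cdot i$ the word obtained by appending the letter $i$ on the right of $u$, so that $e_u\otimes e_i=e_{u\cdot i}$, one gets
\[
S_{sr}(x)\otimes\dd x_r=\sum_{i=1}^d\sum_{u\in\cW}\la S_{sr}(x),u\ra\,e_{u\cdot i}\,\dd x_r^i,
\]
and therefore, integrating coefficient-wise,
\[
\int_s^t S_{sr}(x)\otimes\dd x_r=\sum_{i=1}^d\sum_{u\in\cW}\left(\int_s^t\la S_{sr}(x),u\ra\,\dd x_r^i\right)e_{u\cdot i}.
\]

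Then I would match coefficients word by word. For the empty word $\e$, the unit $\mathbf 1$ contributes $1$ while the integral contributes $0$ (every word $u\cdot i$ has length at least $1$), which recovers $\la S_{st}(x),\e\ra=1$. For a nonempty word $w=(i_1,\ldots,i_n)$, the unit contributes $0$, and in the double sum the only term producing $e_w$ is the one with $i=i_n$ and $u=(i_1,\ldots,i_{n-1})$; its coefficient is exactly $\int_s^t\la S_{sr}(x),(i_1,\ldots,i_{n-1})\ra\,\dd x_r^{i_n}$, which equals $\la S_{st}(x),w\ra$ by the recursive definition \eqref{a1}. Since the projections agree for every $w\in\cW$, the claimed identity \eqref{eq:path Sig diff} follows.

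The only point requiring care, rather than a genuine obstacle, is the bookkeeping inside the tensor product: the recursion \eqref{a1} integrates against the \emph{last} letter $i_n$, which must be matched with the fact that in $S_{sr}(x)\otimes\dd x_r$ the new letter is appended on the \emph{right}. Keeping this orientation consistent throughout, and noting that all formal manipulations of the series are justified because the identity is verified at each fixed projection level, is all that is needed to complete the argument.
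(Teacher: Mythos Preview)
Your proof is correct. The paper does not actually give a proof of this proposition; it is stated as ``a simple non-truncated version of \cite[Prop.~7.8]{FV}'' and left without argument. Your word-by-word verification is essentially the same approach the paper later uses for the 2D analog (Proposition~\ref{prop:Id sig eq}), where the identity is checked via the projections $\pi_n$ onto tensor levels $(\RR^d)^{\otimes n}$ rather than onto individual basis words; your finer, word-by-word projection is equivalent and equally valid.
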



\section{Simplexes and integration}
\label{sec:planecalc}

This section is devoted to collecting some notations as well as elementary results used across the article in the context of calculus in the plane. 


\subsection{Rectangles and simplexes}
\label{ssec:notation}

Throughout the text, rectangles in the square $\ott^{2} \subset \mathbb{R}^2$ will play a crucial role. We first spell out our convention and notation for those objects. 

\noindent
{\bf{Convention}}: we refer to the horizontal and vertical axis in the plane as the first respectively second variable, indexed respectively by subscripts $1$ and $2$. See Figure~\ref{fig:generic-rectangle}.

\begin{nota}\label{not:rectangles}
A generic rectangle $R$ in $\mathbb{R}^2$ is of the form $R=[s_1,t_1]\times [s_2,t_2]$, with $s_{1}\le t_{1}$ and $s_{2}\le t_{2}$. A more compact notation follows by considering points in $\mathbb{R}^2$, setting $\mathbf{s}=(s_{1},s_{2}) $ and $\mathbf{t}=(t_{1},t_{2})$, where $s_1 \le t_1 \le T$, $s_2 \le t_2 \le T$. We then denote by
$R=[\mathbf{s},\mathbf{t}]:= [s_1,t_1]\times [s_2,t_2]$  a sub-rectangle specified by its lower left and upper right
corners, inside the standard-square $[0,T]^2$.  
We shall also write $\bT=(T,T)$ for any positive quantity $T$.
\end{nota}

\begin{nota}
\label{def:shuffle}
Denote by $\ssi_{\mathcal{A}}$ the set of permutations of elements of a finite set $\mathcal{A}$. 
Furthermore, for $n,k\geq 1$ we consider the following set of permutations of $\{1,\ldots,n+k\}$:
\begin{equation}\label{eq:shuffle-permut}
\sh(n,k)
=
\lcl
\rho\in\ssi_{\{1,\ldots,n+k\}}\ | \,
\rho(i) < \rho(j) \text{ if } 1 \le i < j \le n \text{ or } n+1\le i < j \le n+k
\rcl.
\end{equation}

\end{nota}
\begin{center}
\begin{figure}[htbp]
\caption{Typical example of rectangle $[\mathbf{s},\mathbf{t}]$ in $\ott^{2}$.}
\label{fig:generic-rectangle}
\begin{tikzpicture}[xscale=0.4,yscale=0.4]


\draw [->,  very thick] (-2,0) --(23,0) node[below,scale=.8] {{\cdg Axis 1}};
\draw [->,  very thick] (0,-2) --(0,15) node[right,scale=.8] {{\cdg Axis 2}};

\draw [fill,black] (2,0) circle [radius=.2];
\node [below,scale=.8, blue] at (2,0) {$s_{1}$}; 

\draw [fill,black] (20,0) circle [radius=.2];
\node [below,scale=.8, blue] at (20,0) {$t_{1}$}; 

\draw [fill,black] (0,3) circle [radius=.2];
\node [left,scale=.8, blue] at (0,3) {$s_{2}$}; 

\draw [fill,black] (0,12) circle [radius=.2];
\node [left,scale=.8, blue] at (0,12) {$t_{2}$}; 

\draw[blue, very thick] (2,3) rectangle (20,12);

\draw [fill,red] (2,3) circle [radius=.15];
\node [below,scale=.8, red] at (3,3) {$\mathbf{s}=(s_{1},s_{2})$};
\draw [fill,blue] (20,3) circle [radius=.15];
\draw [fill,red] (20,12) circle [radius=.15];
\node [above,scale=.8, red] at (20,12) {$\mathbf{t}=(t_{1},t_{2})$};
\draw [fill,blue] (2,12) circle [radius=.15];
\end{tikzpicture}
\end{figure}
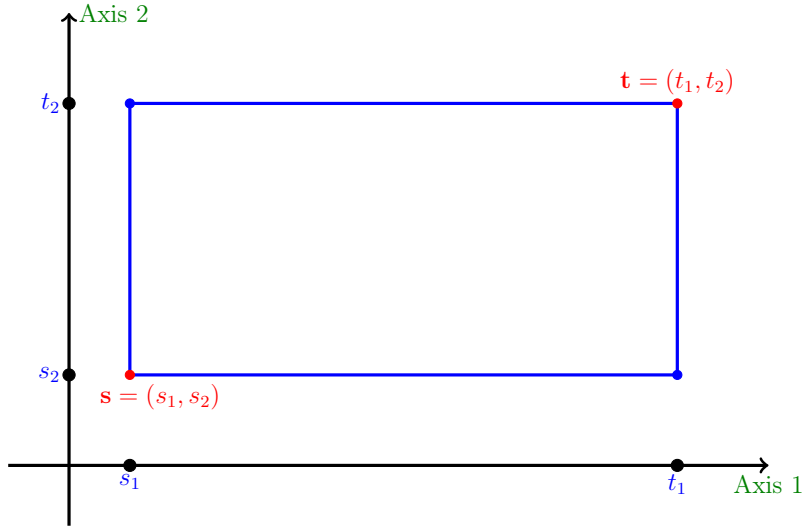
\end{center}
Our upcoming computations employ the concept of rectangular increment, and will heavily depend on simplexes, including a variant that incorporates permutations. 

\begin{defn}
\label{def:simplexes}
Let $a \leq b$ be elements in $\ott$. Then we define the following simplexes:
\begin{enumerate}[wide, labelwidth=!, labelindent=0pt, label= \textbf{(\roman*)}]
\setlength\itemsep{.05in}

\item 
The usual simplex of order $n$ is given by
\begin{equation}
\label{standsimp}
\Delta_{a,b}^n
=
\big\{ r\in [0,T]^n \, | \, a\leq r^{1}\leq \cdots\leq r^{n}\leq b\big\}.
\end{equation}

 \item
 Consider a permutation $\si \in \ssi_{\{1,\ldots,n\}}$. Then we define the $\si$-simplex
 \begin{equation}
 \label{b2}
\Delta^n_{a,b} \sigma = \big\{ r\in [0,T]^n \, | \, a\leq r^{\sigma(1)}\leq \cdots\leq r^{\sigma(n)}\leq b\big\}.
\end{equation}
\end{enumerate}  
\end{defn}

\begin{rem}
Note the use of superscripts in \eqref{standsimp} and \eqref{b2} to denote components of $r\in [0,T]^n$. 
\end{rem}


It is well-known that permutations carry a shuffle property, which can be translated into a corresponding property for simplexes. We recall this elementary result for further use.
For a proof of this statement, we refer the reader to Patras  \cite[Prop.~2]{Patras1991}.
\begin{lem}
    \label{lem:prod of simplex} 
    Consider two positive integers $n,k$, and let $\si\in\Sigma_{\{1,\ldots,n\}}$ and $\tau\in\Sigma_{\{1,\ldots,k\}}$.
For $a\le b$ we define $\Delta_{a,b}^{n}\si$ and $\Delta_{a,b}^{k}\tau$ like in~\eqref{b2}. Then the product $\Delta_{a,b}^{n}\si \times \Delta_{a,b}^{k}\tau$ can be ``linearized'':
    \begin{equation}\label{eq:prod of delta}
    \Delta_{a,b}^{n}\si \times \Delta_{a,b}^{k}\tau
    =
    \bigcup_{\rho\in\sh(n,k)} \Delta_{a,b}^{n+k} (\sigma * \tau) \circ \rho^{-1} \, ,
    \end{equation}
    where $\sigma *\tau \in \Sigma_{\{1,\ldots,n+k\}}$
    is defined as
    \begin{align*}
       (\sigma * \tau)(i)
       :=
       \begin{cases}
           \sigma(i) & i \le n \\
           \tau(i-n) + n & n + 1 \le i.
       \end{cases}
    \end{align*}
    The union is disjoint up to sets of zero Lebesgue measure.
\end{lem}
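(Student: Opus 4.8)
The plan is to reduce the product-of-simplices identity to a statement about permutations acting on the coordinates of the product space $[a,b]^{n+k}$, and then to invoke the elementary shuffle decomposition of the symmetric group. First I would observe that the defining inequalities of $\Delta_{a,b}^{n}\si \times \Delta_{a,b}^{k}\tau$ impose an ordering only \emph{within} the first $n$ coordinates (via $\si$) and \emph{within} the last $k$ coordinates (via $\tau$), but place no constraint \emph{between} the two blocks. Writing a generic point as $r = (r^{1},\ldots,r^{n+k}) \in [a,b]^{n+k}$, the product region is exactly the set where $r^{\si(1)} \le \cdots \le r^{\si(n)}$ and $r^{\si*\tau(n+1)} \le \cdots \le r^{\si*\tau(n+k)}$, using that $(\si*\tau)$ acts as $\si$ on the first block and as a shifted copy of $\tau$ on the second.

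The heart of the argument is then the following combinatorial fact: for almost every $r$ (that is, for $r$ with pairwise distinct coordinates, the complement having zero Lebesgue measure), the $n+k$ values are totally ordered, so there is a unique permutation $\pi \in \ssi_{\{1,\ldots,n+k\}}$ with $r^{\pi(1)} \le \cdots \le r^{\pi(n+k)}$, i.e.\ $r \in \Delta_{a,b}^{n+k}\pi$. I would show that such an $r$ lies in the product region if and only if $\pi$ refines both block-orderings, which is precisely the condition that $\pi$ can be written as $(\si*\tau)\circ\rho^{-1}$ for some $\rho \in \sh(n,k)$. The forward direction is immediate since the total order restricts to each block; for the converse, the key point is that the relative order inside the first block must coincide with that given by $\si$ (and likewise $\tau$ for the second), which translates into the riffle-shuffle constraint \eqref{eq:shuffle-permut} on $\rho$. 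This gives the set-theoretic equality \eqref{eq:prod of delta}, and the disjointness-up-to-measure-zero follows because distinct total orders $\pi$ correspond to simplices meeting only on the measure-zero set where some pair of coordinates coincides.

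Concretely, I would carry this out in three steps: (1) discard the null set $\{r : r^{i}=r^{j}\text{ for some }i\neq j\}$; (2) on the complement, set up the bijection $r \mapsto \pi$ and verify the equivalence ``$r$ in the product $\Leftrightarrow \pi = (\si*\tau)\circ\rho^{-1}$ with $\rho\in\sh(n,k)$'' by unwinding the definitions of $\sh(n,k)$ and of $\si*\tau$; (3) conclude that the union is over exactly those $\pi$, and that it is disjoint modulo null sets.

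\textbf{Main obstacle.} The computation itself is routine bookkeeping; the genuinely delicate point is getting the composition $(\si*\tau)\circ\rho^{-1}$ exactly right — that is, correctly matching the convention by which a permutation labels a simplex in \eqref{b2} (where $\si$ orders the coordinates $r^{\si(1)} \le \cdots \le r^{\si(n)}$) against the convention by which $\rho \in \sh(n,k)$ shuffles the two blocks. One must be careful that $\rho^{-1}$ rather than $\rho$ appears, and that the block-shift ``$+n$'' in the definition of $\si*\tau$ is compatible with the indexing in \eqref{eq:shuffle-permut}; a single off-by-block or inverse error would produce a union over the wrong set of permutations. For this reason I would either verify the identity directly on a small case (say $n=k=1$, where $\sh(1,1)$ has two elements and the product of two intervals splits into two triangles) to fix the conventions, or simply cite Patras \cite[Prop.~2]{Patras1991} for the underlying permutation identity and transcribe it into the simplex language, which is the route the paper indicates.
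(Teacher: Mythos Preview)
Your proposal is correct. The paper itself does not give a proof of this lemma at all: it simply refers the reader to Patras \cite[Prop.~2]{Patras1991}. Your direct combinatorial argument --- discard the diagonal null set, associate to each generic point its unique ordering permutation $\pi$, and check that membership in the product is equivalent to $\pi = (\si*\tau)\circ\rho^{-1}$ with $\rho\in\sh(n,k)$ --- is exactly the standard reasoning underlying the cited result, and you have the composition and inverse in the right place (indeed $\rho := \pi^{-1}\circ(\si*\tau)$ is increasing on each block precisely when $r$ lies in the product). So your write-up is strictly more detailed than what the paper provides, and you even note the citation route at the end.
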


We close this section by defining the main type of simplex which will be used in the context of integration in the plane.

\begin{nota}\label{not:2d-differentials}
Let $\bs=(s_{1},s_{2})$ and $\bt=(t_{1},t_{2})$ be two elements of $\ott^{2}$, where we are using Notation~\ref{not:rectangles} on rectangles. Consider an integer $n\ge 2$. We denote the set 
\begin{multline}\label{eq:def-2d-simplex}
\Delta_{[\bs,\bt]}^{n}
:=
\lcl
(\br^{1},\ldots,\br^{n})\in\lp\ott^{2}\rp^{n} ; \,
s_{1}\le r_{1}^{1} \le \cdots\le r_{1}^{n} \le t_{1} \text{ and } s_{2} \le r_{2}^{1} \le\cdots \le r_{2}^{n}\le t_{2}
\rcl.
\end{multline}
\end{nota}


\subsection{Some properties of integral operators}

To facilitate future computations associated with the construction and properties of signature over an image, this section will revisit several essential properties of iterated integral operators. Commencing with a straightforward definition, we clarify the concept of an iterated integral operator:

\begin{defn}\label{def:intg-operator}
Let $n$ be a positive integer, and consider a measurable set $A\subset [0,T]^{n}$. We write $\cL(V;W)$ for the set of bounded linear operators from a vector space $V$ to a vector space $W$. Then we define an integral operator $\int_{A}$ as an element in $\cL(L^{1}([0,T]^{n}); \RR)$ such that
\begin{equation}
\label{b31}
 \int_{A}: \      X\in L^{1}([0,T]^{n}) \longmapsto 
  \int_{A}X:= \idotsint\limits_{A} X(r_1,\ldots,r_n) \, \dd r_1\cdots \dd r_n  \in \RR.
    \end{equation}
\end{defn}    
    
\noindent
In the sequel, we shall compose integral operators and therefore introduce a particular notation for that purpose.

\newcommand\starI{\star'} 
\newcommand\interlace{\mathbin{\rotatebox[origin=c]{180}{\(\shuffle\)}}}

\begin{defn}
\label{not:star}
We define two types of  compositions of integral operatos as follows: 
\begin{enumerate}[wide, labelwidth=!, labelindent=0pt, label= \textbf{(\roman*)}]
\setlength\itemsep{.05in}
\item Let $A\subset [0,T]^m$ and $B\subset [0,T]^n$ be two measurable sets. Then we define 
\begin{equation}\label{b4}
    \int_A \, \star \, \int_B \equiv \int_{A\times B} \ ,
\end{equation}
where the right hand side of \eqref{b4} is understood as in Definition~\ref{def:intg-operator}.

\item For two measurable sets  $A,B \subset \RR^n$, we define 
\begin{equation}\label{interlace sets}
    \int_A \, \starI \, \int_B \equiv
    \int_{ A \interlace B } \, ,
\end{equation}
where
the ``interlacing''
of $A$ and $B$ is defined as the set
\begin{align*}
    A \interlace B
    :=
    \{ ( r_1^1,r_1^2, \dots, r_1^n, r_2^n) \in \RR^{2n}
    \ \mid\ 
    (r_1^1, \dots, r_1^n) \in A,
    (r_2^1, \dots, r_2^n) \in B \}.
\end{align*}
\end{enumerate}
Note that, if $A,B\subset \RR$, then
$\int_A \star \int_B =  \int_A \starI \int_B$.
\end{defn}

The following ``interchange law'' is immediate.
\begin{lem}
    \label{lem:interchange}
    For $A, B \subset \RR^m, A',B' \subset \RR^n$,
   \begin{align*}
       \lp \int_A \star \int_{A'} \rp
       \starI
       \lp \int_B \star \int_{B'} \rp
       =
       \lp \int_A \starI \int_B\rp 
       \star
       \lp\int_{A'} \starI \int_{B'}\rp.
   \end{align*} 
\end{lem}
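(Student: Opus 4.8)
The plan is to prove the interchange law by unwinding both sides according to the definitions of $\star$ (Cartesian product of domains, \eqref{b4}) and $\starI$ (interlacing, \eqref{interlace sets}), and showing that they describe integration over the very same subset of $\RR^{2(m+n)}$, up to a reordering of the integration coordinates. Since an integral operator $\int_A$ is completely determined by its domain $A$ (Definition~\ref{def:intg-operator}), it suffices to verify that the two composite operators are associated to equal domains.

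First I would compute the domain of the left-hand side. The inner $\star$-compositions give $\int_A \star \int_{A'} = \int_{A \times A'}$ with $A \times A' \subset \RR^{2m}$, and $\int_B \star \int_{B'} = \int_{B \times B'} \subset \RR^{2n}$. Writing a generic point of $A\times A'$ as $(r_1^1,\dots,r_1^m, r_1^{m+1},\dots,r_1^{m+n})$ where the first block lies in $A$ and the second in $A'$ — and similarly writing a point of $B\times B'$ with a subscript $2$ — the interlacing $\starI$ of these two sets, following the pattern in \eqref{interlace sets}, produces the set of points
\begin{equation*}
(r_1^1, r_2^1, \dots, r_1^{m+n}, r_2^{m+n})
\end{equation*}
subject to the constraints $(r_1^1,\dots,r_1^m)\in A$, $(r_1^{m+1},\dots,r_1^{m+n})\in A'$, $(r_2^1,\dots,r_2^m)\in B$, and $(r_2^{m+1},\dots,r_2^{m+n})\in B'$. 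Next I would compute the domain of the right-hand side: $\int_A \starI \int_B$ imposes exactly the two constraints $(r_1^1,\dots,r_1^m)\in A$ and $(r_2^1,\dots,r_2^m)\in B$ on the interlaced $2m$-tuple, while $\int_{A'}\starI\int_{B'}$ imposes $(r_1^{m+1},\dots,r_1^{m+n})\in A'$ and $(r_2^{m+1},\dots,r_2^{m+n})\in B'$; the outer $\star$ then takes the Cartesian product, giving exactly the same four constraints on the same $2(m+n)$ coordinates.

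The only subtlety — and what I expect to be the real (though modest) content of the argument — is bookkeeping the coordinate order: the two sides place the four index-blocks in the ambient space in a permuted order, so strictly speaking the two domains agree only after applying the obvious coordinate permutation of $\RR^{2(m+n)}$. Since Lebesgue measure $\dd r_1 \cdots \dd r_n$ is invariant under permutation of coordinates and the integrand in Definition~\ref{def:intg-operator} is integrated symmetrically in the sense that the variable names are dummy, this permutation does not affect the value of the operator. I would therefore conclude by noting that both composites equal $\int_{D}$ for the common domain $D$ cut out by the four constraints above, whence the two operators coincide as elements of $\cL(L^1;\RR)$. This makes the lemma, as stated, \textbf{immediate} once the interlacing and product notations are expanded, which is precisely why no detailed computation beyond matching domains is needed.
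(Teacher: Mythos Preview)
Your proposal is correct and matches the paper's approach exactly: the paper gives no proof at all, simply declaring the identity ``immediate'', and your unwinding of the $\star$ and $\starI$ definitions to compare domains is precisely the verification that word entails. One minor remark: the coordinate-permutation caveat you flag is in fact unnecessary, since with the interlacing convention of \eqref{interlace sets} both sides yield the literally identical ordered domain $(r_1^1,r_2^1,\dots,r_1^m,r_2^m,r_1^{m+1},r_2^{m+1},\dots,r_1^{m+n},r_2^{m+n})$ subject to the four membership constraints---no reordering is required.
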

 
\begin{example}
As an illustration of the previous notation, let us consider the particular case of simplexes. Here we use Definition~\ref{def:simplexes} and Notation~\ref{not:2d-differentials}. Consider now $(s_1,t_1)$ and $(s_2,t_2)$ in $\Delta_{0,T}^2$, where we recall the notation given in \eqref{standsimp} for $\Delta_{a,b}^n$. Then 
\begin{equation*}
\int_{\Delta^n_{s_1,t_1}} \star \int_{\Delta^n_{s_2,t_2}}
=
\int_{\Delta^n_{s_1,t_1}\times \Delta^n_{s_2,t_2}}
\end{equation*}
 and
\begin{align*}
 \int_{\Delta^n_{s_1,t_1}} \starI \int_{\Delta^n_{s_2,t_2}}
 =
 \int_{\Delta_{[\bs,\bt]}^n}.
\end{align*}
\end{example}
    
The main reason why we have introduced the notion of integral operator is the following: we wish to highlight the fact that the usual algebraic properties of iterated integrals can be reduced to manipulations on simplexes. Let us first rephrase Chen's relation in this framework.

\begin{lem}
    \label{lem:Chens integral relation}
    Let $T>0$ and consider $(s,u,t)\in \Delta^3_{0,T}$. Recall our notation~\eqref{b31} for integral operators and relation~\eqref{b4} for compositions. Then we have that 
    \begin{equation}\label{b5}
        \int_{\Delta_{s,t}^n }
        =\sum_{l+k=n} \int_{\Delta_{s,u}^l}\star \int_{\Delta^k_{u,t}}.
    \end{equation}
\end{lem}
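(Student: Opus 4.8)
The goal is to prove the simplex decomposition
\begin{equation*}
\int_{\Delta_{s,t}^n}
=
\sum_{l+k=n} \int_{\Delta_{s,u}^l}\star \int_{\Delta^k_{u,t}},
\end{equation*}
and by Definition~\ref{not:star}(i) the right-hand side equals $\sum_{l+k=n}\int_{\Delta_{s,u}^l\times\Delta_{u,t}^k}$. Since an integral operator is entirely determined by its domain of integration, the plan is to reduce the identity to a purely set-theoretic decomposition of the simplex $\Delta_{s,t}^n$, valid up to Lebesgue-null overlaps. Concretely, I would show
\begin{equation*}
\Delta_{s,t}^n
=
\bigsqcup_{l+k=n}
\bigl(\Delta_{s,u}^l\times\Delta_{u,t}^k\bigr),
\end{equation*}
where the union is disjoint up to sets of measure zero, and then invoke additivity of the integral over such a partition to conclude.

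The core combinatorial step is the following. A point $r=(r^1,\ldots,r^n)\in\Delta_{s,t}^n$ satisfies $s\le r^1\le\cdots\le r^n\le t$. First I would fix the intermediate value $u\in[s,t]$ (this is where the hypothesis $(s,u,t)\in\Delta^3_{0,T}$ enters, guaranteeing $s\le u\le t$). For a generic $r$, i.e.\ one with all coordinates distinct from $u$, there is a unique index $l\in\{0,1,\ldots,n\}$ such that $r^1,\ldots,r^l\le u$ and $r^{l+1},\ldots,r^n\ge u$; monotonicity of the coordinates makes this splitting well defined. Setting $k=n-l$, the first block $(r^1,\ldots,r^l)$ lies in $\Delta_{s,u}^l$ and the second block $(r^{l+1},\ldots,r^n)$ lies in $\Delta_{u,t}^k$, which shows the inclusion $\subset$; the reverse inclusion is immediate since concatenating an increasing tuple in $[s,u]$ with one in $[u,t]$ yields an increasing tuple in $[s,t]$. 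Disjointness up to null sets follows because two different values of $l$ force some coordinate to equal $u$ exactly, and the locus $\{r^j=u\}$ is a hyperplane of zero Lebesgue measure in $[0,T]^n$.

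The main (and only real) obstacle is bookkeeping the boundary cases $l=0$ and $l=n$ and the measure-zero overlaps cleanly, rather than any deep idea. For $l=0$ the factor $\Delta_{s,u}^0$ should be read as the one-point space contributing the empty integral (the constant $1$), and symmetrically for $k=0$; I would state this convention explicitly so the sum over $l+k=n$ includes the two extreme terms. Once the set identity is established modulo null sets, the passage to integral operators is routine: by Definition~\ref{def:intg-operator} the integral over a finite disjoint union equals the sum of the integrals over the pieces, and null-measure intersections do not affect the value of any $L^1$ integral. I would remark that this is precisely the integral-operator reformulation of the classical Chen relation for iterated integrals, and that an analogous but more elaborate set decomposition (tracking permutations via $\sh(n,k)$) already appears in Lemma~\ref{lem:prod of simplex}, so the same style of argument applies here in the simpler, single-ordering setting.
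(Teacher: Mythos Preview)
Your proposal is correct and follows exactly the same approach as the paper: reduce the identity of integral operators to the set-theoretic decomposition $\Delta_{s,t}^n=\bigcup_{l+k=n}\Delta_{s,u}^l\times\Delta_{u,t}^k$ (disjoint up to null sets), with the convention that the $l=0$ and $k=0$ factors are trivial. The paper's proof simply asserts this decomposition as well-known, whereas you spell out the splitting-index argument and the null-set overlap, so your write-up is in fact more detailed than the original.
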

\begin{proof}
    The following fact about simplexes is well-known and easy to prove: 
    \begin{equation}\label{b6}
        \Delta_{s,t}^n =\bigcup_{l+k=n} \Delta_{s,u}^l\times \Delta^k_{u,t},
    \end{equation}
    where the union is disjoint, up to a set of measure zero,
    and where it is understood that $\Delta_{s,u}^{n}\times \Delta^{0}_{u,t}=\Delta_{s,u}^{n}=\Delta^{0}_{u,t} \times \Delta_{s,u}^{n}$.
    Our claim~\eqref{b5} then follows.
\end{proof}

\begin{rem}
    As a particular case of Lemma \ref{lem:Chens integral relation}, we will later use the following relation for $(s,u,t)\in \Delta^3_{0,T}$: 
   \begin{equation}
   \label{eq:special chen 2 order}
    \int_{\Delta^1_{s,u}}\star \int_{\Delta^1_{u,t} }= \int_{\Delta_{s,t}^2}-\int_{\Delta_{s,u}^2}-\int_{\Delta_{u,t}^2}. 
   \end{equation}
\end{rem}

\begin{rem}\label{rem:algebraic integration}
We can recast relation \eqref{b5} in the language of algebraic integration as described in~\cite{GUBINELLI200486}. To this aim we define the space $C=C_{1}$ of continuous functions $x:[0,T]\rightarrow \RR^d$. We also set $C_2$ as the space of continuous functions $y:[0,T]^2\rightarrow \RR^d$  vanishing on the diagonal (i.e.~$y_{ss}=0$). Next we define an operation $\delta:C_i\rightarrow C_{i+1}$ by 
\begin{equation}
\label{eq:delta def}
    \delta_t x_s =x_t-x_s,\quad\text{and}\quad \delta_u y_{s,t}= y_{s,t}-y_{s,u}-y_{u,t}.
\end{equation}
Then Lemma \ref{lem:Chens integral relation} asserts that for $(s,u,t)\in \Delta^3_{0,T}$
\begin{equation}
\label{eq:ff}
    \delta_u \int_{\Delta^n_{s,t}} \equiv \int_{\Delta^n_{s,t}}-\int_{\Delta^n_{s,u}}-\int_{\Delta^n_{u,t}}
    =\sum_{l+k=n \atop k,l >0} \int_{\Delta^l_{s,u}}\star \int_{\Delta^k_{u,t}}. 
\end{equation}    
\end{rem}

As mentioned in the introduction, iterated integral operators satisfy the shuffle property. This relation is rather useful in both the analysis and applications of the path signature. We write it here purely in terms of the integral operator.  

\begin{prop}[Shuffle relation of permuted integral operators]\label{prop:shuffle operators}
Recall the notion of permuted simplex, $\Delta_{s,t}^n\rho$, introduced in~\eqref{b2}.
For $s\leq t\in [0,T]$ and permutations $\rho \in \ssi_{\{1,\ldots,n\}}$ and $\nu \in \ssi_{\{1,\ldots,m\}}$, we have that 
\begin{equation}
\label{b7}
    \int_{\Delta_{s,t}^n\rho}\star \int_{ \Delta_{s,t}^{m}\nu  }
=\sum_{\si \in \sh(n,m)} \int_{ \Delta_{s,t}^{n+m}(\rho * \nu)\circ\si^{-1}},
\end{equation}
where $\sh(n,m)$ is given in \Cref{def:shuffle}
and the product $*$ on permutations is defined in \Cref{lem:prod of simplex}.
\end{prop}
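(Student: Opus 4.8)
The plan is to reduce the shuffle relation for permuted integral operators entirely to the geometric statement about products of permuted simplexes already recorded in \Cref{lem:prod of simplex}. Recall that by \eqref{b4} the composition $\star$ is defined simply as integration over a Cartesian product, so that
\begin{equation*}
\int_{\Delta_{s,t}^n\rho}\star \int_{ \Delta_{s,t}^{m}\nu  }
=
\int_{\Delta_{s,t}^n\rho \,\times\, \Delta_{s,t}^{m}\nu}.
\end{equation*}
Thus the left-hand side of \eqref{b7} is nothing but the integral operator associated to the product set $\Delta_{s,t}^n\rho \times \Delta_{s,t}^{m}\nu$. The whole statement is therefore a direct consequence of the set-level decomposition of this product.

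The key step is to invoke \Cref{lem:prod of simplex} with $a=s$, $b=t$, and the given permutations $\sigma=\rho$, $\tau=\nu$. That lemma asserts precisely that
\begin{equation*}
\Delta_{s,t}^{n}\rho \times \Delta_{s,t}^{m}\nu
=
\bigcup_{\si\in\sh(n,m)} \Delta_{s,t}^{n+m} (\rho * \nu)\circ \si^{-1},
\end{equation*}
where the union is disjoint up to a set of Lebesgue measure zero. I would then note that the integral operator $\int_{A}$ of \Cref{def:intg-operator} is additive over finite disjoint unions (modulo null sets), since it is just Lebesgue integration against the characteristic function of $A$ and overlaps of measure zero contribute nothing. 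Applying this additivity to the decomposition above yields
\begin{equation*}
\int_{\Delta_{s,t}^n\rho \times \Delta_{s,t}^{m}\nu}
=
\sum_{\si\in\sh(n,m)} \int_{\Delta_{s,t}^{n+m}(\rho*\nu)\circ\si^{-1}},
\end{equation*}
which is exactly \eqref{b7}.

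In short, the proof is essentially a one-line translation of \Cref{lem:prod of simplex} into operator language, using only the definition \eqref{b4} of $\star$ and the measure-additivity of $\int_A$. There is no genuine obstacle here; the only point requiring a word of care is that the union in \Cref{lem:prod of simplex} is disjoint \emph{only up to null sets}, so one should observe explicitly that such null overlaps are harmless for integration of $L^1$ functions. The combinatorial content (the interplay between the shuffle set $\sh(n,m)$ and the concatenated permutation $\rho*\nu$) is entirely absorbed into the cited lemma and need not be re-derived.
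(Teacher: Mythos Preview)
Your proposal is correct and follows essentially the same approach as the paper: use \eqref{b4} to rewrite the $\star$-product as integration over the Cartesian product, then apply \Cref{lem:prod of simplex} directly. The paper's proof is even more terse, but your added remark about additivity over null-set-disjoint unions is a harmless elaboration of the same idea.
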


\begin{proof}
Owing to \eqref{b4}, we have
\begin{equation*}
\int_{\Delta_{s,t}^n\rho}\star \int_{ \Delta_{s,t}^{m}\nu  }
 =
 \int_{\Delta_{s,t}^n\rho \times \Delta_{s,t}^{m}\nu}.
\end{equation*}
Our claim \eqref{b7} is then a direct consequence of Lemma \ref{lem:prod of simplex}. 
\end{proof}

\noindent

We conclude this section with a useful symmetrization property relating iterated integral operators to integral operators over hypercubes. 

\begin{lem}
\label{lem:simplex to hypercubes}
Let $(s,t)\in\Delta_{0,T}^{2}$.
Then
  \[
  \sum_{\rho\in \ssi_{\{1,\ldots,n\}}}\int_{\Delta^n_{s,t}\rho} = \int_{[s,t]^n}.
  \]
\end{lem}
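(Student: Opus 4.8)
The plan is to show that the hypercube $[s,t]^n$ decomposes, up to a set of Lebesgue measure zero, into the disjoint union of the permuted simplexes $\Delta^n_{s,t}\rho$ as $\rho$ ranges over $\ssi_{\{1,\ldots,n\}}$, and then integrate. Concretely, for a point $r=(r^1,\ldots,r^n)\in[s,t]^n$ whose coordinates are pairwise distinct, there is a unique permutation $\rho$ that sorts them into increasing order, i.e.~$r^{\rho(1)}\le\cdots\le r^{\rho(n)}$; by \eqref{b2} this is exactly the condition $r\in\Delta^n_{s,t}\rho$. The set of points with at least two equal coordinates is a finite union of hyperplanes $\{r^i=r^j\}$ intersected with the cube, hence has $n$-dimensional Lebesgue measure zero, so it does not affect any integral.

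First I would record the set-theoretic identity
\begin{equation*}
[s,t]^n = \bigcup_{\rho\in\ssi_{\{1,\ldots,n\}}} \Delta^n_{s,t}\rho ,
\end{equation*}
where the union is disjoint up to a Lebesgue-null set. The inclusion $\supseteq$ is immediate since each $\Delta^n_{s,t}\rho\subset[s,t]^n$ by definition. For $\subseteq$ one uses the sorting argument above: every point with distinct coordinates lands in precisely one $\Delta^n_{s,t}\rho$, and the exceptional diagonal set is null. Disjointness (modulo null sets) follows because two distinct permutations impose incompatible strict orderings on the coordinates, so their simplexes can only overlap on a set where some coordinates coincide.

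Next I would apply the integral operators of \Cref{def:intg-operator}. Since an integral over a null set vanishes and integrals over an (almost) disjoint union add, we get
\begin{equation*}
\sum_{\rho\in\ssi_{\{1,\ldots,n\}}}\int_{\Delta^n_{s,t}\rho}
= \int_{\bigcup_{\rho}\Delta^n_{s,t}\rho}
= \int_{[s,t]^n},
\end{equation*}
as operators in $\cL(L^1([0,T]^n);\RR)$, which is exactly the claimed identity. Here one reads the equalities between integral operators as equalities of their action on an arbitrary $X\in L^1([0,T]^n)$, using the additivity of the Lebesgue integral over disjoint measurable sets and the invariance of the measure under null-set modifications.

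The main obstacle, though it is minor, is handling the measure-zero overlaps carefully: one must verify that the diagonal set $\{r : r^i=r^j \text{ for some } i\neq j\}$ is genuinely Lebesgue-null in $[0,T]^n$ and that the pairwise intersections of distinct $\Delta^n_{s,t}\rho$ are contained in it, so that neither the double-counting nor the omitted boundary contributes to the integral. This is entirely standard, and indeed the same mechanism already underlies \Cref{lem:prod of simplex} and \eqref{b6}; so the proof is really just the observation that summing over all permutations tiles the full cube rather than a single ordered simplex.
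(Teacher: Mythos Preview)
Your proof is correct and follows essentially the same approach as the paper: both argue that the hypercube $[s,t]^n$ decomposes, up to a Lebesgue-null set, into the disjoint union $\bigcup_{\rho}\Delta^n_{s,t}\rho$ of permuted simplexes, and then pass to the corresponding identity of integral operators. Your version is somewhat more detailed in justifying the null-set and disjointness claims, but the underlying idea is identical.
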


\begin{proof}
This is known to result from integration by parts. In geometric terms, it follows from the fact that the hypercube $[s,t]^n$ can be divided into $n!$ different simplexes defined by all possible ways of ordering $n$ variables from $[s,t]^n$. This is described by all possible permutations of the set $\{1,\ldots,n\}$, and thus we have 
\[
    \bigcup_{\rho \in \ssi_{\{1,\ldots,n\}}}\Delta_{s,t}^n\rho = [s,t]^n.
\]
\end{proof}


\subsection{Increments in the plane}
\label{sec:calculus-plane-2}

In the sequel, a field $X$ is indexed by a pair of elements from the standard-square $\ott^{2}$
(analogous to a curve $x$ being indexed by elements from $\ott$). We first introduce notation for the so-called rectangular increment of a field $X$. 

\begin{nota}\label{not:2d-deriv}
Let $X \in \cC^{2}(\ott^{2};\RR^{d})$ be a field. For $\br=(r_{1},r_{2})$ we set
\begin{equation}
\partial_{1}X_{\br} = \frac{\partial X_{\br}}{\partial r_{1}},
\qquad
\partial_{2}X_{\br} = \frac{\partial X_{\br}}{\partial r_{2}},
\qquad\text{and}\qquad
\partial_{12}X_{\br} = \frac{\partial^2 X_{\br}}{\partial r_{1} \partial r_{2}} .
\end{equation}
For a function $f:\RR^{d}\to\RR$, we also use the standard notation $\partial_{i}f(x)=\frac{\partial f}{\partial x^{i}}(x)$.
\end{nota}

\begin{defn}
\label{def:squareincrement}
Consider an $\RR^d$-valued function $X:[0,T]^2\rightarrow \RR^d$, and a generic rectangle $R=[\mathbf{s},\mathbf{t}]$, specified by its lower left and upper right corners, $\mathbf{s}=(s_1, s_2)$ respectively $\mathbf{t}=(t_1,t_2)$  -- using Notation~\ref{not:rectangles}. Then the {\bf{rectangular increment}} of $X$ over $R$ 
is defined to be
\begin{align*}
    \square_{\bs\bt}X 
    &= X_{t_{1};t_2}- X_{s_1;t_2} - X_{t_1;s_2} + X_{s_1;s_{2}}.
\end{align*}
\end{defn}

\noindent
Note that, still in line with Notation~\ref{not:rectangles}, we further condense notation by writing 
$$
    X_{\br}:=X_{r_{1};r_{2}},
$$
for $\br=(r_{1},r_{2}) \in \ott^{2}$. 

\noindent 

Recall that the 1D signature is built on the concept of increments. In particular, we observe that $\lla S_{st}(x) ,  (i_{1}) \rra=x^{i_1}_t-x^{i_1}_s$. The rectangular increment defined above naturally extends this to the 2D setting 
\begin{align*}
    \square_{\bs\bt}X = \int_{\Delta^{1}_{[\bs,\bt]}} \partial_{12} X_{s_1;s_2}\ ds_1 ds_2,
\end{align*}
when $X$ is twice continuously differentiable. With this in mind, using Notation~\ref{not:2d-deriv}, we define the following convention for the mixed partial differentials.

\begin{nota}
\label{not:2d-diff}
Consider a function $X \in \cC^{2}(\ott^{2};\RR^{d})$. For $\br=(r_{1},r_{2})$ and $i,j\in\{1,\ldots,d\}$, we define
\begin{equation}
\label{eq:def-2d-diff}
\dd^{i} X_{\br} := \partial_{12}X_{\br}^{i} \, \dd r_{1} \dd r_{2},
\quad\text{and}\quad
\hdd^{ij} X_{\br} := \partial_{1}X_{\br}^{i} \, \partial_{2}X_{\br}^{j} \, \dd r_{1} \dd r_{2} \, ,
\end{equation}
where $X_{\bt}^{i}$ denotes the $i$th component of $X_{\bt} \in \RR^{d}$. Note that in future computations we will write $\dd^{i} X_{\br}$ and $\dd X_{\br}^{i}$ without distinction.

\end{nota}

This subsection is closed by stating a basic change of variables formula in the plane. We employ Einstein's summation convention.

\begin{prop}
Let $X$ be a field in $\cC^{2}(\ott^{2};\RR^{d})$ and consider a $\cC^{2}$-function $f:\RR^{d}\to\RR$. Recall Notation~\ref{not:2d-differentials} used for 2D-simplexes. Then for $(\bs,\bt)\in\Delta_{[\boo,\bT]}^{2}$ we have
\begin{equation}\label{eq:change-variable}
\square_{\bs\bt}f(X)
=
\int_{[\bs,\bt]} \partial_{i}f(X_{\br}) \, \dd^{i}X_{\br} 
+
\int_{[\bs,\bt]} \partial_{ij} f(X_{\br}) \, \hdd^{ij}X_{\br} ,
\end{equation}
where we use convention~\eqref{eq:def-2d-diff} for the differentials $\dd$ and $\hdd$, and where according to notation~\eqref{b4} we set
\begin{equation}
\label{doubleInt}
    \int_{[\bs,\bt]}  
    := 
    \int_{s_1}^{t_1} \,\star\, \int_{s_2}^{t_2} 
    =
    \int_{[s_{1},t_{1}]\times[s_{2},t_{2}]} \, .
\end{equation}
\end{prop}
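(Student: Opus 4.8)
The plan is to reduce everything to the elementary fact that, for any scalar field $g \in \cC^{2}(\ott^{2};\RR)$, the rectangular increment recovers the integral of the mixed partial derivative, namely
$\square_{\bs\bt} g = \int_{[\bs,\bt]} \partial_{12} g(\br)\, \dd r_{1}\dd r_{2}$. This is just the fundamental theorem of calculus applied one variable at a time: expand $\square_{\bs\bt}g$ as the iterated difference $\big(g(t_{1};\cdot)-g(s_{1};\cdot)\big)$ evaluated between $s_{2}$ and $t_{2}$, integrate $\partial_{2}g$ over $[s_{2},t_{2}]$, and then integrate $\partial_{1}$ of the result over $[s_{1},t_{1}]$. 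This is precisely the observation already recorded just above the statement for the special case $g = X^{i}$, so I would state it as a one-line preliminary.

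The second step is to apply this identity with $g(\br) = f(X_{\br})$, which is $\cC^{2}$ by composition since $X \in \cC^{2}(\ott^{2};\RR^{d})$ and $f \in \cC^{2}$. It then remains to compute $\partial_{12}\big[f(X_{\br})\big]$ pointwise by the chain and product rules, using Einstein's summation convention. Differentiating first in $r_{2}$ gives $\partial_{r_{2}} f(X_{\br}) = \partial_{i} f(X_{\br})\, \partial_{2} X_{\br}^{i}$, and then differentiating in $r_{1}$ and applying the product rule yields two terms: one in which $\partial_{r_{1}}$ acts on the gradient $\partial_{i} f(X_{\br})$, producing $\partial_{ij} f(X_{\br})\,\partial_{1} X_{\br}^{j}\, \partial_{2} X_{\br}^{i}$, and one in which it acts on the factor $\partial_{2} X_{\br}^{i}$, producing $\partial_{i} f(X_{\br})\, \partial_{12} X_{\br}^{i}$.

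The final step is to identify each of these two terms with one of the integrands on the right-hand side. The term $\partial_{i} f(X_{\br})\, \partial_{12} X_{\br}^{i}$ is exactly $\partial_{i} f(X_{\br})\, \dd^{i} X_{\br}$ after restoring $\dd r_{1}\dd r_{2}$, by Notation~\ref{not:2d-diff}. For the other term I would invoke the symmetry of the Hessian, $\partial_{ij}f = \partial_{ji}f$, to relabel the summation indices $i \leftrightarrow j$ and rewrite $\partial_{ij} f(X_{\br})\,\partial_{1} X_{\br}^{j} \partial_{2} X_{\br}^{i} = \partial_{ij} f(X_{\br})\, \partial_{1} X_{\br}^{i} \partial_{2} X_{\br}^{j}$, which is precisely $\partial_{ij} f(X_{\br})\, \hdd^{ij} X_{\br}$. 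Integrating the resulting pointwise identity for $\partial_{12}[f(X_{\br})]$ over $[\bs,\bt]$ and splitting the integral of the sum into a sum of integrals then yields~\eqref{eq:change-variable}. The computation is entirely routine; the only place that warrants care is the index bookkeeping, and in particular the use of the symmetry of $\partial_{ij}f$ so that the quadratic term matches the product-differential $\hdd^{ij}$ exactly as defined, rather than with transposed indices.
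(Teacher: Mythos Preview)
Your proposal is correct and is precisely the argument the paper has in mind: the paper does not spell out a proof but immediately follows the statement with Remark~\ref{rem:fa di bruno}, which records the pointwise identity $\partial_{12}[f(X_{\bt})] = \partial_{i}f(X_{\bt})\,\partial_{12}X_{\bt}^{i} + \partial_{ij}f(X_{\bt})\,\partial_{1}X_{\bt}^{i}\,\partial_{2}X_{\bt}^{j}$ and calls it Fa\`a di Bruno's formula. Your derivation of this identity via chain and product rules, followed by integration over $[\bs,\bt]$, is exactly the intended route; your remark about using the symmetry of the Hessian to align the indices with the definition of $\hdd^{ij}$ is a nice bit of care that the paper leaves implicit.
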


\begin{rem}\label{rem:fa di bruno}
Formula~\eqref{eq:change-variable} above can also be stated using Fa\`a di Bruno's formula. Namely, by considering $\bs=(s_{1},s_{2})$ as fixed, the field $\bt\mapsto Z_{\bt} = \square_{\bs\bt}f(X)$
is such that
\begin{equation}\label{eq:partial derivatives000}
\partial_{12}Z_{\bt}
=
\partial_{i}f(X_{\bt}) \, \partial_{12}X_{\bt}^{i}
+
\partial_{ij} f(X_{\bt}) \, \partial_{1}X_{\bt}^{i} \, \partial_{2}X_{\bt}^{j} ,
\end{equation}
where we have used Notation \ref{not:2d-deriv} for the right hand side.
\end{rem}


\section{Definition of the 2D signature}
\label{sec:2dsignature}

In this section, we will introduce a notion of 2D signature based on a linear equation, similar to Lemma~\ref{lem:linear-eq-1d}. However, we shall see that this simple definition is not enough to grant basic algebraic properties. Therefore in the second part, we shall propose a new and more fruitful notion of 2D signature. The main algebraic propositions are postponed to later sections.


\subsection {A first notion of 2D signature: the $\id$-signature}
\label{ssec:2dsig-1}

With the notation of Section~\ref{sec:calculus-plane-2} at hand, we now introduce our first notion of 2D signature in terms of a linear equation which is reminiscent of~\eqref{c1}. Namely for a collection of constant matrices $A^{i} \in \RR^{e\times e}, \, i=1,\ldots,d$, an initial condition $v\in\RR^{e}$ and a smooth field $X:\ott^{2}\to\RR^{d}$, we consider the $\RR^{e}$-valued solution $Y$ to the linear equation 
\begin{equation}
\label{d1}
Y_{\bt} = v + \sum_{i=1}^{d} \int_{[\bs,\bt]} A^{i} Y_{\br} \, \dd^{i} \!X_{\br} \, ,
\quad\text{for}\quad \bt\in[\bs,\bT] \, ,
\end{equation}
where we recall from Notation \ref{not:rectangles} that $\bs=(s_1,s_2)$ and $\bT=(T,T)$. Note the double integral~\eqref{doubleInt} as well as the differential \eqref{eq:def-2d-diff} present in \eqref{d1}. The next definition proposes a 2D signature over $X$, seen as a function defined recursively on words similarly to~\eqref{a1}. Recall $\Delta^{2}$ defined 
in Notation~\ref{not:2d-differentials}.

\begin{defn}\label{def:2D sig-1}
Let $X:\ott^{2}\to\RR^{d}$ be a smooth field.
The $\id$-signature $\bS^{\id}(X)$ is a function in $(\cC(\Delta^{2}))^{\cW}$, defined for $(\bs,\bt)\in \Delta^{2}$ and every $w=(i_{1},\ldots,i_{n}) \in \cW$ by
\begin{equation}
\label{d2}
\lla \bS_{\bs\bt}^{\id}(X) , \, w  \rra
:=
\int_{[\bs,\bt]} \lla \bS_{\bs\br}^{\id}(X) , \, (i_{1},\ldots,i_{n-1})  \rra \, \dd^{i_{n}}\! X_{\br}.
\end{equation}
\end{defn}

We illustrate the above definition of the identity signature with the following example. 
\begin{example}\label{ex:multiplicative id sig}
    Let the  field $X:[0,T]^2 \rightarrow \RR^d $ be given as the Hadamard product of two paths  $x^1,x^2\in \operatorname{Lip}([0,T];\RR^d)$, such that  $X$ is given by the vector 
    \[
    X_\bt = \begin{bmatrix}
        x^{1,1}_{t_1} x^{2,1}_{t_2} 
        \\
        \vdots
        \\
        x^{1,d}_{t_1} x^{2,d}_{t_2} 
    \end{bmatrix}.
    \]
 Then for any word $w\in \cW$, it is readily checked that the signature inherits the multiplicative structure of the field $X$, in the sense that 
    \begin{equation*}
        \la \bS^{\id}_{\bs\bt}(X),w\ra = \la S_{s_1t_1}(x^1),w\ra \la S_{s_2t_2}(x^2),w\ra. 
    \end{equation*}
    In \cite{Salvi21} the authors show that the signature kernel  $K(x^1,x^2): = \la S(x^1),S(x^2)\ra_{\mathcal{T}((\RR^d))} $, as used in machine learning \cite{Kirlay19}, solves a Goursat PDE, such that the signature kernel is given as the solution of the PDE
    \begin{equation}
        \frac{\partial^2}{\partial t_1 \partial t_2 } u_\bt = u_\bt  \la \dot{x}^1_{t_1} ,\dot{x}^2_{t_2}\ra ,\quad u_{0,t_2}=u_{t_1,0}=1.  
    \end{equation}
    A formal expansion of the solution to this equation  as the signature kernel can then be written as 
    \begin{equation}
        u_\bt = 1+\sum_{w\in \cW} \la S_{s_1t_1}(x^1),w\ra \la S_{s_2t_2}(x^2),w\ra = 1+\sum_{w\in \cW}  \la \bS^{\id}_{\bs\bt}(X),w\ra. 
    \end{equation}
    Investigations of this type of equation have recently been expanded upon in the setting of Fubini's theorem in \cite{CassPei23}, and  for a general analysis of the  non-linear versions of this  Goursat PDE see \cite{FNH21} in the case of Young fields $X$, and \cite{Bechtold2022NonlinearYE} for a regualrization by noise perspective. 
\end{example}

Analogously to Lemma \ref{lem:linear-eq-1d}, the following lemma shows how the 2D signature over X defined in \eqref{d2} permits to express the $\RR^{e}$-valued solution $Y$ to~\eqref{d1}.  

\begin{lem}\label{lem:ID sig}
Given a smooth field $X:\ott^{2}\to\RR^{d}$, the rectangular increment of the $\RR^{e}$-valued solution $Y$ to~\eqref{d1} can be expanded as
\begin{equation}\label{d3}
\sq_{\bs\bt}Y
=
\sum_{w\in\cW} A^{\circ w} v \lla \bS_{\bs\bt}^{\id}(X) ,  w  \rra.
\end{equation}
Recall that the matrix $A^{\circ w}$ corresponds to the products of matrices in reversed order of the word $w$, introduced in~\eqref{c11}.
\end{lem}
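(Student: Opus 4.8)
The plan is to reduce the statement to the same iteration scheme as in the proof of Lemma~\ref{lem:linear-eq-1d}, after one preliminary observation that converts the rectangular increment $\sq_{\bs\bt}Y$ into a plain difference. First I would record the identity $\sq_{\bs\bt}Y = Y_{\bt}-v$. Indeed, whenever the rectangle $[\bs,\br]$ degenerates to a segment, i.e.~when $r_1=s_1$ or $r_2=s_2$, the double integral $\int_{[\bs,\br]}$ appearing in~\eqref{d1} vanishes, so that $Y_{(s_1,r_2)}=Y_{(r_1,s_2)}=v$ for all admissible $r_1,r_2$. Feeding this into Definition~\ref{def:squareincrement} gives
\[
\sq_{\bs\bt}Y = Y_{\bt}-Y_{(s_1,t_2)}-Y_{(t_1,s_2)}+Y_{\bs}=Y_{\bt}-v.
\]
Combined with~\eqref{d1}, and using Einstein's convention, this turns the equation into the fixed-point form $\sq_{\bs\bt}Y = \int_{[\bs,\bt]} A^{i_1} Y_{\br_1}\,\dd^{i_1}\!X_{\br_1}$, which is the exact 2D analogue of the starting point of Lemma~\ref{lem:linear-eq-1d}.

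Next I would iterate precisely as in the one-parameter case. Writing $Y_{\br_1}=v+\sq_{\bs\br_1}Y$ inside the integral splits it into a leading term $A^{i_1}v\int_{[\bs,\bt]}\dd^{i_1}\!X_{\br_1}$, which by~\eqref{d2} equals $A^{\circ(i_1)}v\,\lla\bS^{\id}_{\bs\bt}(X),(i_1)\rra$, plus a remainder in which $\sq_{\bs\br_1}Y$ is expanded again via the same fixed-point identity. At the $m$-th stage this produces $A^{\circ w}v\,\lla\bS^{\id}_{\bs\bt}(X),w\rra$ for each word $w$ of length $m$: the nested double integrals reproduce the recursive definition~\eqref{d2} of the $\id$-signature, while the accumulated matrix product $A^{i_1}A^{i_2}\cdots A^{i_m}$ matches $A^{\circ w}$ with $w=(i_m,\ldots,i_1)$, consistently with the reversed-order convention~\eqref{c11}. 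Summing the repeated indices $i_1,\dots,i_m$ over $\{1,\dots,d\}$ recovers the full sum over $\cW_m$.

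Finally I would dispose of the remainder. After $n$ steps it is an $n$-fold nested integral over the region $\Delta^n_{[\bs,\bt]}$ of Notation~\ref{not:2d-differentials}, whose Lebesgue measure equals $\tfrac{(t_1-s_1)^n (t_2-s_2)^n}{(n!)^2}$. Since $X$ is smooth and $Y$ is continuous on the compact domain $[\bs,\bT]$, so that $\sq_{\bs\br}Y=Y_{\br}-v$ is bounded there, the integrand is bounded and the remainder is of order $C^n/(n!)^2$, hence vanishes as $n\to\infty$. This even-faster-than-factorial decay, compared with the $1/n!$ of the one-parameter setting, guarantees the absolute convergence of the series~\eqref{d3}. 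The only genuinely new ingredient relative to Lemma~\ref{lem:linear-eq-1d} is the increment identity $\sq_{\bs\bt}Y=Y_{\bt}-v$, and I expect this to be the main point to get right: it is what licenses the substitution $Y_{\br}=v+\sq_{\bs\br}Y$ and hence drives the whole recursion, whereas the matrix bookkeeping and the convergence estimate are routine once it is in place.
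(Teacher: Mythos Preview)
Your proposal is correct and follows essentially the same approach as the paper: both start from the boundary observation $Y_{(s_1,r_2)}=Y_{(r_1,s_2)}=v$, deduce $\sq_{\bs\bt}Y=Y_{\bt}-v$, and then iterate exactly as in Lemma~\ref{lem:linear-eq-1d} via the substitution $Y_{\br}=v+\sq_{\bs\br}Y$. The only minor difference is in handling convergence: the paper treats the resulting series first as a formal expansion and then appeals forward to the decay estimate in Proposition~\ref{prop:decay}, whereas you give a self-contained remainder bound using the volume $|\Delta^n_{[\bs,\bt]}|=\tfrac{(t_1-s_1)^n(t_2-s_2)^n}{(n!)^2}$ together with boundedness of the integrand---your version is slightly more direct and avoids the forward reference.
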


\begin{proof}
According to~\eqref{d1}, the initial conditions for $Y$ on the axes are
\begin{equation*}
Y_{\bs} 
= Y_{s_{1},t_{2}}
=Y_{t_{1},s_{2}}=v \,
\end{equation*}
for all $\bt=(t_{1},t_{2})\in[\bs,\bT]$. Therefore, for all $\bt\in[\bs,\bT]$ we also have
\begin{equation*}
    \sq_{\bs\bt}Y = Y_{\bt} - v .
\end{equation*}
Therefore, \eqref{d1} becomes
\begin{equation}
\label{d4}
\sq_{\bs\bt}Y
=
\sum_{i=1}^{d} \int_{[{\bs},{\bt}]} A^{i} Y_{\br} \, \dd^{i}\! X_{\br} .
\end{equation}
We now proceed as in the proof of Lemma~\ref{lem:linear-eq-1d}. Namely on the right hand side of~\eqref{d4} we write $Y_{\br}=v+\sq_{\bs,\br}Y$. Using Einstein's summation convention we get 
\begin{equation}
\label{d41}
\sq_{\bs\bt}Y
=
A^{i_{1}} v \int_{[{\bs},{\bt}]} \dd^{i_{1}}\!  X_{\br^1}+ R_{\bs\bt}^{1} \, ,
\quad\text{where}\quad
R_{\bs\bt}^{1}
=
\int_{[{\bs},{\bt}]} A^{i_{1}} \!\lp\sq_{\bs\br^1}Y\rp \, \dd^{i_{1}}\!  X_{\br^1}  .
\end{equation}
Iterating \eqref{d4} and \eqref{d41} as in the proof of Lemma~\ref{lem:linear-eq-1d}
confirms a ``formal expansion'' \eqref{d3}.
Once we have 
shown \Cref{prop:decay},
this formal expansion
becomes an honest expansion.
\end{proof}

\begin{rem}
The signature $\bS_{\bs\bt}^{\id}(X)$ can be written more explicitly
as a tensor series in $\cT((\mathbb{R}^{d}))$ with
multiple integrals as coefficients.
Indeed, using Notation~\ref{eq:def-2d-simplex} for simplexes, recursion~\eqref{d2} yields
\begin{equation}
\label{d5}
\lla \bS_{\bs\bt}^{\id}(X) , \, w  \rra
=
\int_{\Delta_{[\bs,\bt]}^{n}} \dd^{i_{1}} X_{\br^{1}} \cdots \dd^{i_{n}} X_{\br^{n}} \, , 
\end{equation}
where we recall \eqref{eq:def-2d-simplex} for $\Delta^n_{[\bs,\bt]}$.
\end{rem}

The signature $\bS_{\bs\bt}^{\id}(X)$ solves a linear differential equation which is reminiscent of \eqref{eq:path Sig diff}.
We state this infinite-dimensional
variant of \eqref{d1} below. 
\begin{prop}\label{prop:Id sig eq}
Let $X$ be a function in $\cC^2([0,T]^2;\RR^d)$. Recall the definition of tensor series $\cT((\RR^d))$ spelled out in Section \ref{sec:prelim}. Then as a $\cT((\RR^d))$-valued function indexed by $[0,T]^2$, the signature $\bS^{\id}(X)$ satisfies the following equation for all $(\bs,\bt)\in \Delta^2_{[\mathbf{0},\bT]}$: 
    \begin{equation}
        \bS_{\bs\bt}^{\id}(X)= \mathbf{1}+ \int_{[\bs,\bt]} \bS_{\bs\br}^{\id}(X) \otimes  \dd X_{\br}.
    \end{equation}
\end{prop}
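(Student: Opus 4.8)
The plan is to reduce the claimed tensor-series identity to the coefficient recursion \eqref{d2} by projecting both sides onto an arbitrary word $w\in\cW$. Since the asserted equation lives in $\cT((\RR^d))$, and the $\cT((\RR^d))$-valued integral $\int_{[\bs,\bt]}\bS^{\id}_{\bs\br}(X)\otimes\dd X_{\br}$ is understood level by level (equivalently, word by word), it suffices to verify that every coefficient $\lla\,\cdot\,,w\rra$ of the right-hand side equals $\lla\bS^{\id}_{\bs\bt}(X),w\rra$. This is the two-parameter analogue of \Cref{prop:path sig eq}, and the argument runs in parallel; note that because $\cT((\RR^d))$ is the space of \emph{all} formal tensor series, the identity is a purely coefficient-wise statement and no convergence of the full series is needed here.

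First I would make the integrand explicit. Writing the differential as the degree-one element $\dd X_{\br}=\sum_{i=1}^{d}\dd^{i}X_{\br}\,e_i$ and $\bS^{\id}_{\bs\br}(X)=\mathbf{1}+\sum_{v\in\cW'}\lla\bS^{\id}_{\bs\br}(X),v\rra\,e_v$, I apply the tensor product \eqref{tensorprod}. Because $\dd X_{\br}$ is concentrated in tensor degree one, only the $k=1$ term survives in \eqref{tensorprod}, so tensoring on the right by $\dd X_{\br}$ simply appends a letter:
\[
\bS^{\id}_{\bs\br}(X)\otimes\dd X_{\br}
=\sum_{v\in\cW}\sum_{i=1}^{d}
\lla\bS^{\id}_{\bs\br}(X),v\rra\,\dd^{i}X_{\br}\;e_{(v,i)},
\]
where $(v,i)$ denotes the word $v$ with the letter $i$ appended. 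In particular the empty word receives no contribution, while for $w=(i_1,\ldots,i_n)\in\cW'$ the coefficient of $e_w$ comes uniquely from $v=(i_1,\ldots,i_{n-1})$ and $i=i_n$.

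Next I would extract coefficients from the full right-hand side, using that coefficient projection commutes with the component-wise integral. For $w=\e$ the unit $\mathbf{1}$ gives $1$ and the integral term contributes $0$, matching $\lla\bS^{\id}_{\bs\bt}(X),\e\rra=1$. For $w=(i_1,\ldots,i_n)\in\cW'$ I obtain
\[
\lla \int_{[\bs,\bt]}\bS^{\id}_{\bs\br}(X)\otimes\dd X_{\br},\,w\rra
=\int_{[\bs,\bt]}\lla\bS^{\id}_{\bs\br}(X),(i_1,\ldots,i_{n-1})\rra\,\dd^{i_n}X_{\br},
\]
which is precisely the defining recursion \eqref{d2} for $\lla\bS^{\id}_{\bs\bt}(X),w\rra$. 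As both sides agree on every $w\in\cW$, the identity in $\cT((\RR^d))$ follows. The only genuine point of care---rather than a real obstacle---is the interpretation of the $\cT((\RR^d))$-valued integral and the fact that projecting onto a word commutes with it; this is immediate, since each projection isolates a single ordinary integral over $[\bs,\bt]$, so no interchange of limits beyond the definition of the component-wise integral is required.
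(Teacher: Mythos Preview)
Your proof is correct and follows essentially the same approach as the paper: both verify the identity by projecting onto finite-dimensional pieces and reducing to the defining recursion. The paper projects onto tensor levels via $\pi_n$ and invokes the explicit iterated-integral representation~\eqref{d5} to peel off the outermost integral, whereas you project onto individual words and appeal directly to the recursion~\eqref{d2}; these are minor variations of the same coefficient-by-coefficient argument.
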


\begin{proof}
We denote by $\pi_n(\bS^{\id}_{\bs\bt}(X))$ the projection of $\bS^{\id}_{\bs\bt}(X)$ onto the set of level $n$ signature components. More specifically, using Definition \ref{def:words} and our notation in \eqref{c4} we set 
\begin{equation}
    \pi_n(\bS^{\id}_{\bs\bt}(X))= \sum_{w\in \cW_n} \la \bS^{\id}_{\bs\bt}(X),w\ra e_w. 
\end{equation}
    Then resorting to our explicit representation \eqref{d5} we have 
\begin{equation}
\label{eq:projection}
    \pi_n(\bS^{\id}_{\bs\bt}(X)) = \int_{\Delta^n_{[\bs,\bt]}} \bigotimes_{i=1}^n \dd X_{\br^i}, 
\end{equation}
where $\dd X = (\dd X^1,\ldots, \dd X^d)$. Moreover, one can recast \eqref{eq:projection} as 
\[
 \pi_n(\bS^{\id}_{\bs\bt}(X)) = \int_{[\bs,\bt]}\left(\int_{\Delta^{n-1}_{[\bs,\br^n]}} \bigotimes_{i=1}^{n-1} \dd X_{\br^i}\right)\otimes \dd X_{\br^n} .
\]
Resorting to the explicit representation in \eqref{d5} again, we end up with 
\[
   \pi_n(\bS^{\id}_{\bs\bt}(X)) = \int_{[\bs,\bt]}\pi_{n-1}(\bS^{\id}_{\bs\br^n}(X))\otimes \dd X_{\br^n}.
\]
Putting together all the projections finishes the proof. 
\end{proof}

\subsection{Chen's relation: Horizontal and vertical}
\label{sec:2DChen}

Recall that Chen's relation for curves indexed by $[0,T]$ has been stated in Lemma \ref{lem:Chens integral relation}. We now investigate proper generalizations to the 2D-setting. Similarly to other contributions in the field (see \cite{giusti2022topological,HT}) we will only get partial versions of this relation for the 2D signature. Our restrictions will be of two types: 
\begin{enumerate}[wide, labelwidth=!, labelindent=0pt, label= \textbf{(\roman*)}]
\setlength\itemsep{.05in}
    \item We will only treat partial Chen-type relations in directions $1$ and $2$ separately. 
    One can combine the two, but will then obtain a rather complicated relation. 
    
    \item Our results will be restricted to the signature $\bS^{\id}(X)$ introduced in Section \ref{ssec:2dsig-1}. 
\end{enumerate}
 In subsequent sections, we shall give some hints about the way to overcome those difficulties, see in particular Remark \ref{rem: chen to full}. 
 
 Below we start by introducing some useful notation to state our main results. 

\begin{nota}
\label{not:shuffle-on-words}
Consider two words $w=(i_{1},\ldots,i_{n})$ and $w'=(j_{1},\ldots,j_{n'})$ in $\cW$, where $\cW$ was defined in Definition \ref{def:words}. The concatenation of $w$ and $w'$ is denoted by $\hw=[ww']$. Then, if $\rho\in\ssi_{\{1,\ldots,n\}}$ we set
\begin{equation*}
w_{\rho}
=
\lp i_{\rho(1)},\ldots,i_{\rho(n)}  \rp.
\end{equation*}
\end{nota}
We continue with a definition of two convenient types of products between integral operators. 
\begin{defn}
Let $X$ be a field in $\cC^2([0,T]^2)$ and consider the signature $\bS^{\id}(X)$ given in \eqref{d2}. Pick two words $w\in \cW_n$ and $w'\in \cW_{n'}$ with $n,n'\in \NN$. For $a<b$, $c<d$ and $(\bs,\bt)\in \Delta^2_{[0,T]^2}$ we introduce the following two convolution products: 
\begin{enumerate}[wide, labelwidth=!, labelindent=0pt, label= \textbf{(\roman*)}]
\setlength\itemsep{.05in}
    \item a horizontal convolution product of the form \begin{multline}\label{eq:horizontal convolution}
    \la \bS^{\id}_{(a,s_2),(b,t_2)}(X),w\ra \ast_1 \la \bS^{\id}_{(c,s_2),(d,\cdot)}(X),w'\ra \\
    :=\int_{\Delta^{l}_{a,b} \times \Delta^{l}_{s_2,t_2}} \left(\int_{\Delta^{k}_{c,d} \times \Delta^{k}_{s_2,r_2^{l+1}}} 
 \prod_{j=1}^l \dd X^{w'_j}_{\br^j}\right) \prod_{j=l+1}^{n+k} \dd X^{w_{j-l}}_{\br^j} .
\end{multline}
   \item 
   a vertical convolution product of the form 
   \begin{multline}\label{eq:vertical convolution}
        \la \bS^{\id}_{(s_1,a),(t_1,b)}(X),w\ra \ast_2 \la \bS^{\id}_{(s_1,c),(\cdot,d)}(X),w'\ra \\
        :=\int_{\Delta^{l}_{s_1,t_1} \times \Delta^{l}_{a,b}} \left(\int_{\Delta^{k}_{s_1,r_1^{l+1}} \times \Delta^{k}_{c,d}} 
 \prod_{j=1}^l \dd X^{w'_j}_{\br^j}\right) \prod_{j=l+1}^{n+k} \dd X^{w_{j-l}}_{\br^j} .
   \end{multline}
    \end{enumerate}
\end{defn}

\begin{rem}
    In terms of integrals over simplexes the horizontal convolution product is obtained by integrating the $1$-variable over $\Delta^n_{a,b}\times \Delta^{n'}_{c,d}$, while the $2$-variable is integrated over the whole $n+n'$ dimensional simplex $\Delta^{n+n'}_{s_2,t_2}$. In terms of our integral operators from Definition \ref{def:intg-operator} one can write 
    \begin{equation}\label{g1}
    \la \bS^{\id}_{(a,s_2),(b,t_2)}(X),w\ra \ast_1 \la \bS^{\id}_{(c,s_2),(d,\cdot)}(X),w'\ra
    =\left(\int_{\Delta^{n}_{a,b}} \star \int_{\Delta^{n'}_{c,d}}  \right)\starI \int_{\Delta^{n+n'}_{s_2,t_2}} \prod_{j=1}^{n+n'}\dd X^{[w,w']_j}_{\br^j} .
\end{equation}
    Notice that in relation \eqref{g1}, the glueing of the variables in direction $2$ is reminiscent of the convolution structure for Volterra equations introduced in \cite{HT}. Moreover, one can also express the horizontal convolution product~\eqref{eq:horizontal convolution} using signature elements as follows:
    \begin{eqnarray*}
             \la \bS^{\id}_{(a,s_2),(b,t_2)}(X),w\ra \ast_1 \la \bS^{\id}_{(c,s_2),(d,\cdot)}(X),w'\ra
             &= &
             \int_{\Delta^{l}_{a,b} \times \Delta^{l}_{s_2,t_2}} \la \bS^{\id}_{(c,s_2),(d,r_2^{n'+1})}(X),w'\ra \prod_{j=n'+1}^{n+n'} \dd X^{w_{j-n'}}_{\br^j}
             \notag\\
             &=&
             \int_{\Delta^{l}_{a,b} \times \Delta^{l}_{s_2,t_2}} \la \bS^{\id}_{(c,s_2),(d,r_2^{1})}(X),w'\ra \prod_{j=1}^{n} \dd X^{w_{j}}_{\br^j}. 
    \end{eqnarray*}
    The vertical product $\ast_2$ can be considered similarly. 
\end{rem}

We will express our partial Chen type relations within the framework of algebraic integration as in Remark \ref{rem:algebraic integration}. Since we are now in a setting with two variables, we introduce some notation about directional versions of the operator $\delta$. 

\begin{nota}\label{not:delta}
For functions or increments defined on $[0,T]^{2}$
we will denote by $\delta^1$ the delta operator $\delta$ introduced in Remark~\ref{rem:algebraic integration},  restricted to act on the $1$-variable only. Similarly, we denote by $\delta^2$ the $\delta$ operator restricted to the $2$-variable. 
\end{nota}

\begin{rem}
    It is readily checked that we can compose the two delta functions as $\delta^1\circ \delta^2$, and that the composition is commutative. Furthermore, it is clear that, with the definition from~\eqref{eq:delta def} in mind, for every field $X$ in $\cC^2([0,T]^2)$ we have 
    \[
    \delta_{t_1}^1\delta_{t_2}^2 X_\bs = \square_{\bs\bt}X. 
    \]
    
\end{rem}

We are now ready to state our partial Chen-type result in the horizontal and vertical directions separately. 
\begin{prop}[Horizontal and vertical Chen's relation]\label{Prop:Horizontal and vertical chen}
    Consider a field $X\in \cC^2([0,T]^2)$. let $\bS^{\id}(X)$ be its associated 2D-signature given in Lemma \ref{lem:ID sig}. Pick a word $w\in \cW_n$ for $n\geq 1$. Then for $(\bs,\bt)\in \Delta^2_{[0,T]^2}$ the following two relations holds: 
   \begin{enumerate}[wide, labelwidth=!, labelindent=0pt, label= \textbf{(\roman*)}]
\setlength\itemsep{.05in}
       \item  For any $u_1\in [s_1,t_1]$ we have 
    \begin{equation*}
        \la \delta^1_{u_1}\bS^{\id}_{\bs\bt}(X),w\ra  
        = \sum_{k=1}^{n-1}  \la \bS^{\id}_{(u_1,s_2),\bt}(X),(w_1\cdots w_k)\ra \ast_1  \la \bS^{\id}_{\bs,(u_1,\cdot)}(X),(w_{k+1}\cdots w_n)\ra , 
    \end{equation*}
    where the convolution product $\ast_1$ comes from \eqref{eq:horizontal convolution}. 
    \item  For any $u_2\in [s_2,t_2]$ we have 
    \begin{equation*}
        \la \delta^2_{u_2}\bS^{\id}_{\bs\bt}(X),w\ra  
        = \sum_{k=1}^{n-1}  \la \bS^{\id}_{(s_1,u_2),\bt}(X),(w_1\cdots w_k)\ra \ast_2  \la \bS^{\id}_{\bs,(\cdot,u_2)}(X),(w_{k+1}\cdots w_n)\ra , 
    \end{equation*}
    where the convolution product $\ast_2$ comes from \eqref{eq:vertical convolution}. 
   \end{enumerate} 
   
\end{prop}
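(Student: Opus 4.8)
The plan is to reduce both identities to the one-dimensional Chen relation of Lemma~\ref{lem:Chens integral relation}, applied in a single direction, combined with the explicit multiple-integral representation~\eqref{d5} of the $\id$-signature. I focus on (i); part (ii) is the mirror statement obtained by interchanging the roles of the two axes.

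First I would unfold the operator $\delta^1_{u_1}$ according to Notation~\ref{not:delta} and \eqref{eq:delta def}: since $\delta^1$ acts on the $1$-variable only,
\begin{equation*}
\la \delta^1_{u_1}\bS^{\id}_{\bs\bt}(X),w\ra = \la \bS^{\id}_{(s_1,s_2),(t_1,t_2)}(X),w\ra - \la \bS^{\id}_{(s_1,s_2),(u_1,t_2)}(X),w\ra - \la \bS^{\id}_{(u_1,s_2),(t_1,t_2)}(X),w\ra.
\end{equation*}
Then I would substitute the representation~\eqref{d5} into each of the three terms. The crucial observation is that all three integrals carry the same $2$-variable simplex $\Delta^n_{s_2,t_2}$ and the same integrand $\prod_{j=1}^n \dd X^{w_j}_{\br^j}$; they differ only through the $1$-variable simplex, which is $\Delta^n_{s_1,t_1}$, $\Delta^n_{s_1,u_1}$ and $\Delta^n_{u_1,t_1}$ respectively. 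Factoring the $1$- and $2$-variable domains through the interlacing operator $\interlace$ of Definition~\ref{not:star}, each term is of the form $\int_{D}\starI \int_{\Delta^n_{s_2,t_2}}$ with $D$ the corresponding $1$-variable simplex.

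The heart of the argument is the one-dimensional Chen relation in the $1$-variable. Applying \eqref{eq:ff} (the $\delta$-form of Lemma~\ref{lem:Chens integral relation}) to $D=\Delta^n_{s_1,t_1}$ with intermediate point $u_1$ gives the identity of integral operators
\begin{equation*}
\int_{\Delta^n_{s_1,t_1}} - \int_{\Delta^n_{s_1,u_1}} - \int_{\Delta^n_{u_1,t_1}} = \sum_{l+k=n,\ l,k>0} \int_{\Delta^l_{s_1,u_1}\times \Delta^k_{u_1,t_1}},
\end{equation*}
the union being disjoint up to measure zero. Interlacing both sides with $\int_{\Delta^n_{s_2,t_2}}$ in the $2$-variable and invoking the interchange law of Lemma~\ref{lem:interchange}, I would rewrite each summand as $\bigl(\int_{\Delta^l_{s_1,u_1}}\star\int_{\Delta^k_{u_1,t_1}}\bigr)\starI \int_{\Delta^n_{s_2,t_2}}$ acting on $\prod_{j=1}^n \dd X^{w_j}_{\br^j}$. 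Comparing with the integral-operator description~\eqref{g1} of the horizontal convolution product, this is exactly the convolution $\ast_1$ of the two sub-signatures obtained by splitting $w$ at position $k$: the letters indexing the $\Delta^\bullet_{u_1,t_1}$ block assemble into one factor and those indexing the $\Delta^\bullet_{s_1,u_1}$ block into the other.

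I expect the main obstacle to be purely bookkeeping rather than analytic: one must match, for each decomposition $l+k=n$, the correct sub-word of $w$ to the correct corner (lower-left versus upper-right in the $1$-direction) and to the correct inner/outer position in the $2$-ordering dictated by the convolution convention~\eqref{eq:horizontal convolution}. Concretely, under the synchronized ordering of $\Delta^n_{[\bs,\bt]}$ one has to verify that the first block of $1$-variables is interlaced below the second block in the $2$-direction, so that the recursive upper limit $r_2^{l+1}$ appearing in~\eqref{eq:horizontal convolution} is produced correctly; keeping the word indices consistent across the $\star$, $\starI$ and $\interlace$ operations is where care is required. Once the boundary cases $l=n$ and $k=n$ are seen to reconstruct the two subtracted signatures in the expansion of $\delta^1_{u_1}$, the identity (i) follows, and (ii) is obtained verbatim with $\Delta^n_{s_2,t_2}$ split and $\Delta^n_{s_1,t_1}$ kept whole, yielding the vertical convolution $\ast_2$ of~\eqref{eq:vertical convolution}.
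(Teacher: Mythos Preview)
Your proposal is correct and follows essentially the same route as the paper: write $\la \bS^{\id}_{\bs\bt}(X),w\ra$ as $\int_{\Delta^n_{s_1,t_1}}\starI\int_{\Delta^n_{s_2,t_2}}\prod_j\dd X^{w_j}_{\br^j}$, apply the one-dimensional Chen relation~\eqref{eq:ff} in the $1$-variable to produce the sum $\sum_{l+k=n,\,l,k>0}\bigl(\int_{\Delta^l_{s_1,u_1}}\star\int_{\Delta^k_{u_1,t_1}}\bigr)\starI\int_{\Delta^n_{s_2,t_2}}$, and identify each summand with the horizontal convolution via~\eqref{g1}. One small remark: the step where you pass the $\delta^1$-difference through the $\starI$ product only needs bilinearity of $\starI$, not the interchange law of Lemma~\ref{lem:interchange}; the latter is not actually invoked in the paper's argument and is not required here.
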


\begin{proof}
For simplicity, we perform the proof for the horizontal convolution $\ast_1$, but the vertical convolution $\ast_2$ follows by the same type of arguments.   We use the representation \eqref{eq:rep of 2d sig with simplex permutation} of the signature with $\nu=\id$, which yields 
    \begin{equation}\label{eq:proof 1}
        \la \bS^{\id}_{\bs\bt}(X),w\ra = \int_{\Delta^n_{s_1,t_1} } \starI \int_{\Delta^n_{s_2,t_2}}  \prod_{i=1}^n \dd X^{w_i}_{\br^i}. 
    \end{equation}
    Hence, applying the operator $\delta^1$ from notation \ref{not:delta} to both sides of \eqref{eq:proof 1} we get 
    \[
    \la \delta^1_{u_1} \bS^{\id}_{\bs\bt}(X),w\ra = \lp \delta^1_{u_1}\int_{\Delta^n_{s_1,t_1} } \rp \starI \int_{\Delta^n_{s_2,t_2}}  \prod_{i=1}^n \dd X^{w_i}_{\br^i}.
    \]
    We can now resort to \eqref{eq:ff}, which yields 
    \begin{equation}\label{eq:proof 2}
        \la \delta^1_{u_1} \bS^{\id}_{\bs\bt}(X),w\ra = \sum_{l+k=n \atop  l,k\geq 1} \lp \int_{\Delta^l_{s_1,u_1} } \star \int_{\Delta^k_{u_1,t_1} }\rp  \starI \int_{\Delta^n_{s_2,t_2}}  \prod_{i=1}^n \dd X^{w_i}_{\br^i}.
    \end{equation}
    Owing to our representation of the horizontal convolution product in \eqref{eq:horizontal convolution} we obtain the claimed relation.  
\end{proof}
\subsection{Challenges with the $\id$ signature}

The signature proposed in~\eqref{d2} is rather natural and interestingly simple. However, it lacks some of the basic properties compared to the signature of a path. Let us briefly highlight two of those shortcomings:
\begin{enumerate}[wide, labelwidth=!, labelindent=0pt, label= \textbf{(\roman*)}]
\setlength\itemsep{.05in}

\item
\label{it:shuffle} \emph{Lack of shuffle property.}
A common requirement for a proper signature is stability under multiplication, i.e., iterated integrals \eqref{d5} should form an algebra. However,  $\bS^{\id}(X)$ does not fulfill this property; the $d=1$ case, i.e., considering a real-valued field $X \in \mathcal{C}^ {2}(\ott^{2};\RR)$ suffices to see this. Indeed, due to~\eqref{d5}, the increment $\square X=\int \dd X$ is trivially seen as a component of $\bS^{\id}(X)$. Now define the product
\begin{equation*}
\Pi_{\bs\bt}
=\lp \int_{[\bs,\bt]} \dd X_{\br}\rp^2 = 
\int_{[\bs,\bt]} \dd X_{\br} \, \int_{[\bs,\bt]} \dd X_{\bv} , 
\end{equation*}
where we recall that $\dd X$ is given by~\eqref{eq:def-2d-diff}. Then some elementary computations involving integration in $[\bs,\bt]\times[\bs,\bt]$ reveal that $\Pi_{\bs\bt}={{2}}\Pi_{\bs\bt}^{1} + {{2}} \Pi_{\bs\bt}^{2}$, with
\begin{equation}\label{d6}
\Pi_{\bs\bt}^{1}
=
\int_{\Delta_{[\bs,\bt]}^{2}} \dd X_{r_{1}^{1};r_{2}^{1}} \dd X_{r_{1}^{2};r_{2}^{2}} \,
\quad\text{and}\quad
\Pi_{\bs\bt}^{2}
=
\int_{\Delta_{[\bs,\bt]}^{2}} \dd X_{r_{1}^{1};r_{2}^{2}} \dd X_{r_{1}^{2};r_{2}^{1}} \, .
\end{equation}
Now referring to~\eqref{d5} again, the term $\Pi_{\bs\bt}^{1}$ in~\eqref{d6} is easily identified with $\langle \bS^{\id}(X), (1,1)\rangle_{\bs\bt}$. However, $\Pi_{\bs\bt}^{2}$ is not part of the signature, due to the permutation of $r_{2}^{1}$ and $r_{2}^{2}$ in the double integrals. This permutation phenomenon will feature prominently in our future considerations.

\item\label{it:ch of vb} 
\emph{Lack of change of variables formula for exponential functions.}
A signature should accommodate simple expansions in change of variables formulae. Here again, it is easy to find examples for which $\bS^{\id}(X)$ fails to be appropriate for this elementary task. That is, consider the same generic real-valued field $X \in \mathcal{C}^ {2}(\ott^{2};\RR)$ as in item~\ref{it:shuffle} above. Next, for $\bt\in [\bs,\bT]$ set $Z_{\bt}=\exp(\square_{\bs\bt}X)$. 
Hence, applying a small variant of the  Fa\'a Di Bruno type  formula  stated in Remark \ref{rem:fa di bruno} (differentiating  $\bt\mapsto f(\square_{\bs\bt} X)=Z_{\bt}$ instead of $\bt \mapsto f(X_\bt)$), we have that 
\[
\partial_{12}Z_{\bt} = Z_{\bt} \, \partial_{12} X 
+ Z_{\bt}\, \partial_1\left(X_{\bt}-X_{t_1,s_2}\right)\partial_2\left( X_{\bt}-X_{s_1,t_2}\right).
\]
Therefore, a first-order approximation for the rectangular increment of $Z$ is
\begin{equation}\label{d7}
\square_{\bs\bt} Z
\approx
Z_{\bs} \,
\square_{\bs\bt} X
+
Z_{\bs} \,R^{X}_{\bs\bt},
\end{equation}
where the term $R^{X}_{\bs\bt}$ is defined by 
\[
R^{X}_{\bs\bt} = \int_{[\bs,\bt]} \partial_1\left(X_{\br}-X_{r_1,s_2}\right)\partial_2\left( X_{\br}-X_{s_1,r_2}\right)\, \dd\br.  
\]
Now, as mentioned in~\ref{it:shuffle}, the increment $\square_{\bs\bt} X$ is part of $\bS^{\id}(X)$, {{i.e., $\square_{\bs\bt} X=\langle \bS^{\id}_{\bs\bt}(X), (1)\rangle$}}. However, the last term $R^{X}$ in~\eqref{d7} is not an element of the signature $\bS^{\id}(X)$. The above first-order approximation has thus to be expressed in terms of a larger object then the $\id$ signature, which we will call the full signature.
\end{enumerate}


\subsection{Definition of the full 2D signature} 

In Section \ref{ssec:2dsig-1}, we have seen that the $\id$-signature $\bS^{\id}(X)$
does not possess some of the desirable properties known to hold for the classical path signatures.
As addressed in relation \eqref{d6}, the problem seems to arise from the fact that permutations of simplexes appear. To address this issue, we will now propose another notion of 2D signature, taking such permutations into account.  

Before proceeding to the definition of the 2D signature, we will introduce a new set of word-permutation tuples. Recall that the set of words $\cW_n$ of length $n\geq 0$ is defined in Definition \ref{def:words}. 

\begin{defn}\label{def:2d words}
The set of extended words is defined as $\hat{\cW} = \bigcup_{n=0}^\infty \hat{\cW}_n $, where $\hat{\cW}_0=\{(\e,\id)\}$ and for $n\geq 1$
\begin{equation}
    \hat{\cW}_n=\{(w,\nu)\ | \,w\in \cW_n,\, \nu\in \Sigma_{\{1,\ldots,n\}}\}. 
\end{equation}
\end{defn}

Recall that $\Delta^2$ is given in Notation \ref{not:2d-differentials}.  
We are now ready to define the full 2D signature. 

\begin{defn}\label{def:extended Sig}
Let $X$ be a function in $\cC^2([0,T]^2;\RR^d)$.
The {\em (full) 2D signature} $\bS(X)$ is a function in $(\cC(\Delta^2))^{\hat{\cW}}$, defined for an extended word $(w,\nu)\in \hat{\cW}_{n}$ by setting 
\begin{equation}
\label{eq:2d pairing}
    \langle\bS_{\bs\bt}(X),(w,\nu)\rangle  := \int_{\Delta^n_{[\bs,\bt]}} \prod_{i=1}^n \dd X^{w_i}_{r_1^i,r_2^{\nu_i}}. 
\end{equation}
\end{defn}

\begin{rem}
    According to \eqref{b2} and \eqref{interlace sets}, one can recast \eqref{eq:2d pairing} as 
    \begin{equation}
    \label{eq:rep of 2d sig with simplex permutation}
        \langle\bS_{\bs\bt}(X),(w,\nu)\rangle = \int_{\Delta^n_{s_1,t_1}}\starI \int_{\Delta^n_{s_2,t_2}\nu^{-1}} \prod_{i=1}^n \dd X^{w_i}_{r_1^i,r_2^i}. 
    \end{equation}
    The representation in \eqref{eq:rep of 2d sig with simplex permutation} will be useful to prove some of our algebraic relations. 
    %
\end{rem}

With the definition of the full 2D signature at hand, let us illustrate its behavior concerning the 2D signature $\bS^{\id}(X)$ proposed in Definition \ref{def:2D sig-1} and its relation to 2D iterated sums as proposed in \cite{diehl2022two}. 

\begin{example}
    Let us illustrate the 2D signature in light of the multiplicative field discussed in Example \ref{ex:multiplicative id sig}. Namely we consider a $\RR$-valued field $X_{\bt}=x^1_{t_1}x^2_{t_2}$ for two Lipschitz paths $x^1$ and $x^2$. Choosing an extended word $(w,\nu)\in \hat{\cW}_n$ we see that 
    \begin{equation}
        \la \bS_{\bs\bt}(X),(w,\nu)\ra =\la S_{s_1t_1}(x^1),w\ra \la S_{s_2t_2}(x^2),w_{\nu^{-1}}\ra, 
    \end{equation}
    where $w_{\nu^{-1}}=(w_{\nu^{-1}(1)},\ldots,w_{\nu^{-1}(n)})$. Indeed, to see this observe that 
    \begin{equation}
        \la \bS_{\bs\bt}(X),(w,\nu)\ra = \la S_{s_1t_1}(x^1),w\ra \int_{\Delta^n_{s_2t_2}} \prod_{i=1}^n \dd x^{2,w_i}_{r^{\nu_i}_2},
    \end{equation}
    and by Fubini's theorem it follows that 
    \[
    \int_{\Delta^n_{s_2t_2}} \prod_{i=1}^n \dd x^{2,w_i}_{r^{\nu_i}_2}= \int_{\Delta^n_{s_2t_2}} \prod_{i=1}^n \dd x^{2,w_{\nu^{-1}_i}}_{r^{i}_2} = \la S_{s_2t_2}(x^2),w_{\nu^{-1}}\ra. 
    \]
    We see that when $\nu=\id$, then we recover the results of Example \ref{ex:multiplicative id sig}. 
\end{example}

\begin{example}
Consider $i_1,i_2,i_3\in \{1,\ldots,d\}$ and the permutation $\perm{132} \in \Sigma_{\{1,2,3\}}$.
\begin{align*}
    \langle\bS_{\bs\bt}(X),((i_1,i_2,i_3),\perm{132})\rangle  =
    \int_{
    \substack{
        s_1 < r_1^1<r_1^2<r_1^3 < t_1\\
        s_2 < r_2^1<r_2^2<r_2^3 < t_2}}
        \dd X^{i_1}_{r^1_1,r^1_2}
        \dd X^{i_2}_{r^2_1,r^3_2}
        \dd X^{i_3}_{r^3_1,r^2_2}.
\end{align*}
Employing a matrix notation akin to the one used in \cite{diehl2022two}\footnote{Contrary to what has been done in \cite{diehl2022two}, we orient the plane, and correspondingly the matrices, from bottom-left to top-right.} this corresponds to the matrix
\begin{align*}
       \left\langle\bS_{\bs\bt}(X), \begin{bmatrix}
           0    &i_2    & 0 \\
           0    &0      & i_3 \\
           i_1  &0      & 0
        \end{bmatrix}\right\rangle.
\end{align*}    
\end{example}

\begin{rem}\label{rem:two issues of ID sig}
The two issues \ref{it:shuffle}-\ref{it:ch of vb} raised after Proposition  \ref{prop:Id sig eq} are easily resolved by using the full 2D signature $\bS(X)$. Specifically, the term $\Pi^2_{\bs\bt}$ in \eqref{d6} can be written as 
\begin{equation}\label{eq:d9}
    \Pi^2_{\bs\bt} = \int_{\Delta^2_{[\bs,\bt]}} \dd X_{r_1^1,r_2^2}\dd X_{r_1^2,r_2^1}= \la \bS_{\bs\bt}(X),(w,\nu)\ra,
\end{equation}
where $w=(1,1)$ and $\nu\in \Sigma_{\{1,2\}}$ is defined by $\nu=\perm{21}$. As for the term $R^{X}_{\bs\bt}$ in \eqref{d7}, observe that by applying the fundamental theorem of calculus to the increments in $$
\partial_1\left(X_{\br}-X_{r_1,s_2}\right)\partial_2\left( X_{\br}-X_{s_1,r_2}\right),
$$
we also get that 
\[
    R^{X}_{\bs\bt} =\int_{\Delta^2_{[\bs,\bt]}} \dd X_{r_1^1,r_2^2}\dd X_{r_1^2,r_2^1}= \la \bS_{\bs\bt}(X),((1,1),\perm{21})\ra= \Pi_{\bs\bt}^2.
\] 
With this computation we also observe that even for fields with one ambient dimension, the full signature is not trivial (i.e.~it can not be written in terms of monomials), in contrast to what is the case for the path signature. 
\end{rem}

\begin{rem}\label{rem: chen to full}
    It is a non-trivial task to generalize Proposition \ref{Prop:Horizontal and vertical chen} to the 2D signature $\bS(X)$ from Definition \ref{def:extended Sig}. Indeed, consider $w\in \cW_n$, $\nu \in \Sigma_{\{1,\ldots,n\}}$ and $s_1,t_1,u_1$ as in Proposition \ref{Prop:Horizontal and vertical chen}. One can repeat the manipulations from \eqref{eq:proof 1} and \eqref{eq:proof 2} in order to get 
    \[
    \la \bS^{\id}_{\bs\bt}(X),w\ra = \sum_{l+k=n \atop  l,k\geq 1} \lp \int_{\Delta^l_{s_1,u_1} } \star \int_{\Delta^k_{u_1,t_1} }\rp  \starI \int_{\Delta^n_{s_2,t_2}\nu^{-1}}  \prod_{i=1}^n \dd X^{w_i}_{\br^i}.
    \]
   The next step would then be to decompose $\Delta^n_{s_2,t_2}\nu^{-1}$ further.
   Now the problem is that for $i=1,\ldots,k$, the integration variables $r_2^i$ will in general not be in the simplex $\Delta^k_{s_2,u_2}$. This rules out the possibility of deriving a formula like \eqref{eq:horizontal convolution}. The ability to overcome this difficulty relies on a richer algebraic structure around decompositions of simplexes, which is outside the scope of the current article (but which we hope will be investigated in the future). 
\end{rem}

We conclude this section by presenting a linear integral equation that is satisfied by the truncation of the 2D signature $\bS(X)$ to words of length $n \le 3$.

\begin{prop}
\label{prop:recursive rel for some permutations}
Let $X:[0,T]^2 \rightarrow \RR^d $ be a $\cC^2$ field. Consider the 2D signature $\bS(X)$ given in Definition~\ref{def:extended Sig}, seen as an element of $\cC(\Delta^2)^{\hat{\cW}}$. Then for $n\leq 3$ and  $(w,\nu)\in \hat{\cW}_n$, with $w=(i_1,\ldots,i_n)$, the coordinate $\la \bS(X),(w,\nu)\ra $ satisfies an equation of the form
\begin{equation}\label{e1}
    \la \bS_{\bs\bt}(X),(w,\nu)\ra = \mathbf{1}_{\{w=\e\}} + \mathbf{1}_{\{w\ne\e\}}  \int_{[\bs,\bt]} \cL_{\bv}^{\bs\bt;(i_1\dots i_{n-1},\nu)} \dd X^{i_n}_{\bv},
\end{equation}
where the function $\cL_{\bv}^{\bs\bt;(i_1\dots i_{n-1},\nu)}$ is given by the following expressions for various permutations $\nu$: 
{\small
\begin{align*}
   \cL_{\bv}^{\bs\bt;(\e,\perm{1})} 
   &= 1 \\
   \cL_{\bv}^{\bs\bt;(i_1,\perm{12})} 
   &=
   \la \bS_{s_{1},s_{2};v_1,v_2}(X), ((i_1), \id) \ra \\
   \cL_{\bv}^{\bs\bt;(i_1,\perm{21})} 
   &=
   \la \bS_{s_{1},v_{2};v_1,t_2}(X), ((i_1), \id) \ra \\
   \cL_{\bv}^{\bs\bt;(i_1i_2,\perm{123})} &= \la \bS_{s_{1},s_{2};v_1,v_2}(X), ((i_1,i_2), \id) \ra \\
   \cL_{\bv}^{\bs\bt;(i_1i_2,\perm{132})} 
   &=
   \la \bS_{s_{1},s_{2};v_1,t_2}(X), ((i_1,i_2), \id) \ra - \la \bS_{s_{1},s_{2};v_{1},v_{2}}(X), ((i_1,i_2), \id) \ra - \la \bS_{s_1,v_2;v_1,t_2}(X), ((i_1,i_2), \id) \ra \\
   \cL_{\bv}^{\bs\bt;(i_1i_2,\perm{213})}
   &= 
   \la \bS_{s_{1},s_{2};v_1,v_2}(X), ((i_1,i_2), \perm{21}) \ra \\
   \cL_{\bv}^{\bs\bt;(i_1i_2,\perm{231})}
   &= 
   \la \bS_{s_{1},v_{2};v_1,t_2}(X), ((i_1,i_2), \perm{12}) \ra \\
   \cL_{\bv}^{\bs\bt;(i_1i_2,\perm{321})} &= \la \bS_{s_{1},v_{2};v_1,t_2}(X), ((i_1,i_2), \perm{21}) \ra, 
\end{align*}
}
and 
\small{
\begin{multline*}
       \cL_{\bv}^{\bs\bt;(i_1i_2,\perm{312})}
   = 
   \la \bS_{s_{1},s_{2};v_1,t_2}(X), ((i_1,i_2), \perm{21}) \ra - \\ 
   \la \bS_{s_{1},s_{2};v_{1},v_{2}}(X), ((i_1,i_2), \perm{21}) \ra - \la \bS_{s_1,v_2;v_1,t_2}(X), ((i_1,i_2), \perm{21}) \ra .
\end{multline*}
}

\end{prop}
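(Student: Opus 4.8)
The plan is to prove \eqref{e1} by one uniform ``peeling'' argument valid for every $(w,\nu)\in\hat\cW_n$ with $n\le 3$, and then to read off the functions $\cL$ by specializing to each permutation. The base case $w=\e$ is immediate from Definition~\ref{def:extended Sig}, since $\la\bS_{\bs\bt}(X),(\e,\id)\ra=1$; this gives the term $\mathbf 1_{\{w=\e\}}$. So assume $w=(i_1,\ldots,i_n)$ with $n\ge 1$.

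First I would start from the representation \eqref{eq:2d pairing} and isolate the differential attached to the largest first coordinate $r_1^n$. Setting $\bv=(v_1,v_2):=(r_1^n,r_2^{\nu_n})$ and applying Fubini, the point $\bv$ sweeps the whole rectangle $[\bs,\bt]$, the factor $\dd X^{i_n}_\bv$ becomes the outer integrator, and the remaining $n-1$ differentials are integrated over the region cut out of $\Delta^n_{[\bs,\bt]}$ by fixing $\bv$. This immediately yields the outer shape $\int_{[\bs,\bt]}\cL_\bv\,\dd X^{i_n}_\bv$ of \eqref{e1}, and what remains is to identify the inner integral $\cL_\bv$ as a combination of signatures over sub-rectangles.

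Next I would analyse that inner region. Its surviving first coordinates satisfy $s_1\le r_1^1\le\cdots\le r_1^{n-1}\le v_1$, hence range over $\Delta^{n-1}_{s_1,v_1}$. For the second coordinates, placing $r_2^{\nu_n}=v_2$ at the $\nu_n$-th slot of the increasing chain $s_2\le r_2^1\le\cdots\le r_2^n\le t_2$ forces the coordinates at positions $1,\ldots,\nu_n-1$ into $[s_2,v_2]$ and those at positions $\nu_n+1,\ldots,n$ into $[v_2,t_2]$. Relabelling the surviving second-coordinate positions in increasing order onto $\{1,\ldots,n-1\}$ induces a permutation $\nu'$ recording the relative order that $\nu$ assigns to them; this $\nu'$ is the permutation inside $\cL_\bv$. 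If $\nu_n=n$ all surviving second coordinates lie in $[s_2,v_2]$, so $\cL_\bv=\la\bS_{s_1,s_2;v_1,v_2}(X),(w',\nu')\ra$ with $w'=(i_1,\ldots,i_{n-1})$; if $\nu_n=1$ they all lie in $[v_2,t_2]$, giving $\cL_\bv=\la\bS_{s_1,v_2;v_1,t_2}(X),(w',\nu')\ra$; and if $1<\nu_n<n$ the surviving second coordinates straddle $v_2$.

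The straddle is the only genuinely new case, and the hard part of the argument. For $n\le 3$ it occurs only when $n=3$ and $\nu_n=2$, i.e.\ for $\nu\in\{\perm{132},\perm{312}\}$, leaving exactly one surviving second coordinate below $v_2$ and one above. Here I would invoke the same disjoint simplex decomposition that underlies Lemma~\ref{lem:Chens integral relation}: up to a null set the ordered chain of the two surviving second coordinates in $[s_2,t_2]$ splits into the three pieces ``both $\le v_2$'', ``both $\ge v_2$'', and ``one on each side of $v_2$'', so the straddle region equals $\Delta^2_{s_2,t_2}$ minus $\Delta^2_{s_2,v_2}$ and minus $\Delta^2_{v_2,t_2}$ as integration domains. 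This translates into
\[
\cL_\bv=\la\bS_{s_1,s_2;v_1,t_2}(X),(w',\nu')\ra-\la\bS_{s_1,s_2;v_1,v_2}(X),(w',\nu')\ra-\la\bS_{s_1,v_2;v_1,t_2}(X),(w',\nu')\ra,
\]
with the same $\nu'$ throughout, since restricting to a sub-rectangle does not alter the relative order of the two surviving coordinates. Carrying out the relabelling gives $\nu'=\id$ for $\perm{123},\perm{231},\perm{132}$ and $\nu'=\perm{21}$ for $\perm{213},\perm{321},\perm{312}$ (while for $n=2$ the inner permutation is trivial), and pairing each $\nu'$ with the rectangle dictated by $\nu_n$ reproduces precisely the listed expressions for $\cL$. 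Finally I would note, as anticipated in Remark~\ref{rem: chen to full}, that this scheme stops closing at $n\ge 4$: a straddle can then leave two or more coordinates on the same side of $v_2$, and such a region is no longer a finite signed sum of signatures over rectangles, which is exactly why the statement is confined to $n\le 3$.
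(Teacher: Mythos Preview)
Your proposal is correct and follows essentially the same route as the paper's proof: both isolate the outermost differential $\dd X^{i_n}_{\bv}$ with $\bv=(r_1^n,r_2^{\nu_n})$, observe that the inner integral is immediately a signature over a sub-rectangle when $\nu_n\in\{1,n\}$, and in the straddle case $\nu_n=2$ (for $n=3$) use the simplex decomposition $\int_{\Delta^1_{s_2,v_2}}\star\int_{\Delta^1_{v_2,t_2}}=\int_{\Delta^2_{s_2,t_2}}-\int_{\Delta^2_{s_2,v_2}}-\int_{\Delta^2_{v_2,t_2}}$ (the paper's relation~\eqref{eq:special chen 2 order}) to obtain the three-term expression. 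Your presentation is slightly more uniform in packaging the $\nu_n$-trichotomy and the induced permutation $\nu'$ explicitly, but the argument and the key tool are identical.
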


\begin{proof}
    The cases $n=1,2$ are easily handled, similarly to Proposition \ref{prop:Id sig eq}. The same kind of argument also holds for $n=3$ when $\nu(3)\in \{1,3\}$.
    Therefore, in the remainder of the proof, we will focus on the case $n=3$ and $\nu(3)=2$. Specifically, let us assume $w=(i_1,i_2,i_3)$ for $i_1,i_2,i_3 \in \{1,\ldots,d\}$ and
    $\nu = \perm{132}$.
    The other case, $\nu=\perm{312}$, can be treated similarly. The value of the 2D signature is defined for 
    $(\bs,\bt)\in \Delta^2_{[\bs,\bt]}$:
    \[
        \la \bS_{\bs\bt}(X), (w,\nu)\ra 
        = \int_{\Delta^3_{[\bs,\bt]}} \dd X^{i_1}_{r_1^1,r_2^1} \dd X^{i_2}_{r_1^2,r^3_2} \dd X^{i_3}_{r_1^3,r_2^2}. 
    \]
    Recall that $\Delta^3_{[\bs,\bt]}$ was introduced in \eqref{eq:def-2d-simplex}. To set up an induction, let us write  
    \[
        \la \bS_{\bs\bt}(X), (w,\nu)\ra 
        = \int_{{s_1 \leq r_1^3 \leq t_1}\atop {s_2 \leq r_2^2 \leq t_2}} \left( \int_{{s_1\leq r_1^1\leq r_1^2 \leq r_1^3}\atop{s_2 \leq r_2^1 \leq r_2^2\, ; \, r_2^2\leq r_2^3 \leq t_2}} \dd X^{i_1}_{r_1^1,r_2^1} \dd X^{i_2}_{r_1^2,r^3_2} \right) \dd X^{i_3}_{r_1^3,r_2^2}. 
    \]
    Otherwise stated, one has 
    \begin{equation}
    \label{eq:alt Q}
        \la \bS_{\bs\bt}(X), (w,\nu)\ra 
        = 
        \int_{{s_1 \leq r_1^3 \leq t_1}\atop {s_2 \leq r_2^2 \leq t_2}} Q_{r_1^3,r_2^2}  \dd X^{i_3}_{r_1^3,r_2^2},  
    \end{equation}
    where for $\bv=(v_{1},v_{2})\in[\bs,\bt]$ we have set 
    \begin{equation}
    \label{eq:Q def}
    Q_{v_1,v_2} 
    := \int_{(r_1^1,r_1^2,r_2^1,r_2^3 )\in D_{v_1,v_2}} \dd X^{i_1}_{r_1^1,r_2^1}\dd X^{i_2}_{r_1^2,r_2^3},  
    \end{equation}
    and where the domain $D_{v_{1},v_{2}}\subset [0,T]^4$ can be decomposed as $D_{v_{1},v_{2}}=D^1_{v_{1}} \times D^2_{v_{2}}$ with 
    \begin{align*}
        D^1_{v_{1}} &= \{(p_1,q_1)\in \Delta^2_{s_1,t_1}; s_1\leq p_1 \leq q_1 \leq v_{1}\}
        \\
            D^2_{v_{2}} &= \{(p_2,q_2)\in \Delta^2_{s_2,t_2}; s_2\leq p_2 \leq v_{2}, \text{ and } v_{2} \leq q_2 \leq  t_2\}. 
    \end{align*}
    Notice that we have several dummy integration variables above, so we will simplify relations \eqref{eq:alt Q}-\eqref{eq:Q def} as 
    \begin{equation}\label{eq:f}
        \la \bS_{\bs\bt}(X), (w,\nu)\ra = \int_{{s_1 \leq v_1 \leq t_1}\atop {s_2 \leq v_2 \leq t_2}} Q_{v_1,v_2}  \dd X^{i_3}_{v_1,v_2} =\int_{[\bs,\bt]} Q_{\bv}  \dd X^{i_3}_{\bv},  
    \end{equation}
    with 
    \begin{equation*}
        Q_\bv = \int_{(\br^1,\br^2)\in D_{\bv}} \dd X^{i_1}_{\br^1}\dd X^{i_2}_{\br^2}. 
    \end{equation*}
    Now notice that due to relation \eqref{eq:special chen 2 order} we have 
    \begin{equation*}
        \int_{D^2_{v_2}} 
        = \int_{\Delta^1_{s_2,v_2}}\star \int_{\Delta^1_{v_2,t_2}} 
        = \int_{\Delta^2_{s_2,t_2}}
                -\int_{\Delta^2_{s_2,v_2}}-\int_{\Delta^2_{v_2,t_2}}. 
    \end{equation*}
    This information is then inserted into the definition of $Q$ to discover that 
    \begin{align}\label{eq:Q rel w S}
     Q_\bv 
    &=  \int_{\Delta^2_{[s_1,v_1]\times [s_2,t_2]}} \dd X^{i_1}_{\br^1}\dd X^{i_2}_{\br^2} 
                - \int_{\Delta^2_{[s_1,v_1]\times [s_2,v_2]}} \dd X^{i_1}_{\br^1}\dd X^{i_2}_{\br^2} 
                - \int_{\Delta^2_{[s_1,v_1]\times [v_2,t_2]}} \dd X^{i_1}_{\br^1}\dd X^{i_2}_{\br^2}  
        \\ 
        &=  \la \bS_{s_{1},s_{2};v_1,t_2}(X), ((i_1,i_2), \id) \ra 
        - \la \bS_{s_{1},s_{2};v_1,v_2}(X), ((i_1,i_2), \id) \ra
    - \la \bS_{s_{1},s_{2};v_1,t_2}(X), ((i_1,i_2), \id) \ra.       \nonumber   
    \end{align}
    Plugging \eqref{eq:Q rel w S} into \eqref{eq:f}, we thus end up with 
    \begin{multline*}
        \la \bS_{\bs\bt}(X), (w,\nu)\ra 
        = \int_{[\bs,\bt]} \la \bS_{s_{1},s_{2};v_1,t_2}(X), ((i_1,i_2), \id) \ra \dd X^{i_{3}}_\bv \\
        -  \int_{[\bs,\bt]} \la \bS_{s_{1},s_{2};v_{1},v_{2}}(X), ((i_1,i_2), \id) \ra \dd X^{i_{3}}_\bv
    - \int_{[\bs,\bt]} \la \bS_{s_1,v_2;v_1,t_2}(X), ((i_1,i_2), \id) \ra \dd X^{i_{3}}_\bv.  
    \end{multline*}
    Summarizing this by setting 
    \[
    \cL^{\bs\bt}_\bv =  \la \bS_{s_{1},s_{2};v_1,t_2}(X), ((i_1,i_2), \id) \ra 
        -  \la \bS_{s_{1},s_{2};v_{1},v_{2}}(X), ((i_1,i_2), \id) \ra 
    -  \la \bS_{s_1,v_2;v_1,t_2}(X), ((i_1,i_2), \id) \ra,  
    \]
    finishes the proof. 
\end{proof}

\begin{rem}
Proposition \ref{prop:recursive rel for some permutations} only gives an integral equation for the signature up to level $n=3$. For $n\geq 4$, deriving such an equation would require further insight into the algebraic structure which is beyond the scope of the current paper. 
\end{rem}


\subsection{Invariance properties}\label{subsec:invariance}

Among the central and elementary properties of the path signature, let us mention the following ones:
\begin{enumerate}[wide, labelwidth=!, labelindent=0pt, label= \textbf{(\roman*)}]
\setlength\itemsep{.05in}
    \item  \emph{Translation invariance:}  For $a\in \RR^d$ and a path $x:[0,1]\rightarrow \RR^d $ one has $ S(x+a)=S(x)$.
    \item \emph{Re-parametrization invariance:} For a non-decreasing function $\phi:[0,1]\rightarrow [0,1]$ with $\phi(0)=0$ and $\phi(1)=1$ one has $S(X\circ \phi)_{0,1}=S(X)_{0,1}$.
\end{enumerate}

Heuristically, translation invariance tells us that the signature ``does not see'' the absolute level at which the path is operating, but only sees relative differences. Reparametrization invariance tells us that the signature does not care about different speeds at which we might run through our path. These two features have been crucial in assessing the usefulness of the signature method in machine learning and other areas, see e.g. \cite{ChevyKrom2016,LyonsMcL2022}. 

We will now discuss these properties in the setting of the 2D signature. They rely on the choice of differential structure to work with for the concept of a 2D signature. Consequently, we will explore and elaborate on them in relation to the differential $\dd X$ as introduced in the Notation \ref{not:2d-diff}.


\subsubsection{Translation invariance}
\label{ss:translate_invariance}

When working with the 2D signature over a $\RR^d$-valued field $X:[0,T]^2 \rightarrow \RR^d $, there are three different ways to look at translation; one is to shift the field with a constant vector $a\in \RR^d$, another is to shift the field by 1D paths, only depending on either the first variable or the second. More precisely, let $x^i:[0,T]\rightarrow \RR^d$ for $i=1,2$ be two paths, and set $\tau X_{t_1,t_2}:=X_{t_1,t_2}+x^1_{t_1}+x^2_{t_2} +a$. Then, given that we work with the differential structure $\dd X$ defined in Notation 
\ref{not:2d-diff} for the 2D signature $S(X)$, we observe that for every $i\in \{1,\ldots,d\}$ we have 
\[
\dd^i (\tau X) = \dd^i  X,  
\]
and so it follows that 
\begin{equation*}
    \bS(\tau X)=\bS(X). 
\end{equation*}

\begin{rem}
This is a rather striking property of the 2D signature and sheds light on the type of properties in a field $X$ captured by the signature. It is complementary to the classical path signature, and for feature extraction purposes, depending on the task at hand, it might be beneficial to also include the path signature of the paths $t_1\mapsto X(t_1,u)$ and $t_2\mapsto X(u,t_2)$ for some choices of $u$, as the 2D signature will not capture information provided from this. In contrast, the 2D signature will provide us information about how much "small changes in one direction are affected by small changes in the other direction", and then the iterated integrals provide us with a systematic (partially ordered) comparison of this information over a field $X$.   
\end{rem}


\subsubsection{Invariance to stretching}

While there is no direct analogue of the re-parameterization invariance of the path signature in the 2D setting, the closest thing would be to consider invariance to stretching. 

\begin{defn}
We say that a continuously differentiable function $\phi:[0,T]^2\rightarrow [0,T]^2$  is a {\em stretching} if 
\[
\phi(t_1,t_2)=(\phi_1(t_1),\phi_2(t_2)),
\]
where  $\phi_i:[0,T]\rightarrow [0,T]$ are monotone functions with the property that $\phi_i(0)=0$ and $\phi_i(T)=T$ for $i=1,2$.
\end{defn}

For a field $X:[0,T]^2\rightarrow \RR^d$ and a stretching $\phi$ we 
define now the stretched field $X^\phi (t_1,t_2)=X(\phi(t_1,t_2)).$ By the chain rule, for all $i\in \{1,\ldots ,d\}$ we have that 
\[
\dd^i (X^\phi) = (\dd^i X)(\phi) \,\phi'_1\, \phi'_2. 
\]
Integrating over a rectangle $[\bs,\bt]$ and applying change of variable, it is readily checked that
\begin{equation}\label{eq:integral over reparameter}
    \int_{\bs}^\bt \dd^i X^\phi = \int_{\phi(\bs)}^{\phi(\bt)} \dd^i X. 
\end{equation}
In particular, recalling that $\bT$ stands for the tuple $(T,T)$ we have  $\int_{\mathbf{0}}^\bT \dd^i X^\phi = \int_{\mathbf{0}}^{\bT} \dd^i X$. Iterating~\eqref{eq:integral over reparameter} over the word $w=(i_1,..,i_n)$ we further deduce that 
\[
\bS_{\bs\bt}(X^\phi)= \bS_{\phi(\bs),\phi(\bt)}(X), 
\]
and conclude that the 2D signature over $[0,T]^2$ is invariant to stretching.


\subsubsection{Equivariance to rotation}

We will in this section explore how rotations of the field $X$ affect the 2D signature.  To this end, we begin with a definition of rotating a field. 

\begin{defn}\label{def:rotation}
A counterclockwise coordinate rotation by angle $\theta$ around the origin is defined by
\[
\phi_\theta (t_1,t_2)=(t_1\cos\theta-t_2 \sin\theta, t_1\sin\theta +t_2\cos \theta).
\]
Given a continuous field $X:[-1,1]^2\rightarrow \RR^d  $, its clockwise rotation%
\footnote{Note that the counterclockwise rotation
of the parameters induces a \emph{clockwise} rotation
of the field.}
by $\theta$ is defined by 
\begin{equation}\label{eq:rotated X}
    X^\theta (t_1,t_2) := X(\phi_\theta(t_1,t_2)). 
\end{equation}
\end{defn}

\begin{rem}
We will here consider fields defined on the standard square centered in zero, $[-1,1]^2$, as the rotation provided above is around $(0,0)$. This guarantees that when $\theta=\pi/2$ the field is still well defined over $[-1,1]^2$. 
\end{rem}

In the next proposition, we will show a certain equivariance of
the $\id$ Signature with respect to rotations by multiples of $\pi/2$ of
an image over $[-1,1]^2$.

\begin{prop}
Let $X$ be a field in $\cC^2([-1,1]^2;\RR^d)$ and let $(w,\nu)$ be a word in  $\hat{\cW}_n$ for $\nu\in \Sigma_{\{1,\ldots,n\}}$. Let $\rho_n \in \Sigma_{\{1,\ldots,n\}}$ denote the reversal permutation, i.e. where $\rho_n(i)=n-i+1$ for $i=1,\ldots,n$. Then the following three identities holds: 
\begin{enumerate}[wide, labelwidth=!, labelindent=0pt, label= \textbf{(\roman*)}]
\setlength\itemsep{.05in}
\item
The rotation by $\theta=\pi/2$, satisfies (recall the notation for the right-action on words from Notation \ref{not:shuffle-on-words} )
\[
 \la \bS_{-\mathbf{1}\mathbf{1}}(X^{\pi/2}),(w,\nu)\ra
 =(-1)^n
 \la \bS_{\mathbf{-1}\mathbf{1}}(X),
     (w_{\nu^{-1}\circ \rho_n},
     \nu^{-1} \circ \rho_n) \ra. 
\]

\item The rotation by $\theta=\pi$, satisfies
\[
 \la \bS_{-\mathbf{1}\mathbf{1}}(X^{\pi}),(w,\nu)\ra
 =
 \la
     \bS_{\mathbf{-1}\mathbf{1}}(X),
     (w_{ \rho_n^{-1} \circ \nu \circ \rho_n }, \rho_n^{-1} \circ \nu \circ \rho_n)
 \ra. 
\]

\item The rotation by $\theta=3\pi/2$, satisfies
\[
 \la \bS_{-\mathbf{1}\mathbf{1}}(X^{3\pi/2}),(w,\nu)\ra
 =(-1)^n
 \la \bS_{\mathbf{-1}\mathbf{1}}(X),
  (w_{\rho_n^{-1} \circ \nu^{-1}}, \rho_n^{-1} \circ \nu^{-1})
 \ra. 
\]
\end{enumerate}
Here, $X^\theta$ is defined in \eqref{eq:rotated X}. 
\end{prop}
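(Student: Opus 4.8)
Here is how I would approach the proof.

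The plan is to reduce everything to the single rotation by $\pi/2$ and then iterate, exploiting that the parameter rotations compose, $\phi_{\pi/2}\circ\phi_{\pi/2}=\phi_{\pi}$ and $\phi_{\pi/2}\circ\phi_{\pi}=\phi_{3\pi/2}$, so that $X^{\pi}=(X^{\pi/2})^{\pi/2}$ and $X^{3\pi/2}=\big((X^{\pi/2})^{\pi/2}\big)^{\pi/2}$. First I would establish the base case $\theta=\pi/2$. Since $\phi_{\pi/2}(t_1,t_2)=(-t_2,t_1)$, the chain rule together with the symmetry of the mixed partial (Clairaut's theorem, which makes $\partial_{12}$ well defined regardless of the order of differentiation) yields the pointwise identity
\begin{equation*}
\partial_{12}\big(X^{\pi/2}\big)_{(t_1,t_2)} = -\,\partial_{12}X_{(-t_2,t_1)} .
\end{equation*}
This is the crux: via Notation~\ref{not:2d-diff}, each of the $n$ differentials $\dd^{w_i}X$ appearing in Definition~\ref{def:extended Sig} contributes a factor $-1$, producing the global sign $(-1)^n$, while its evaluation point is simultaneously reflected and transposed.

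Second, I would insert this identity into the defining integral $\langle \bS_{-\mathbf{1}\mathbf{1}}(X^{\pi/2}),(w,\nu)\rangle$ over $\Delta^n_{[-\mathbf{1},\mathbf{1}]}$ and carry out the (measure-preserving, unit-Jacobian) change of variables dictated by $\br\mapsto(-r_2,r_1)$. Under this substitution the first axis of $X$ is fed by $-r_2^{\nu_i}$ and the second by $r_1^i$; because $r\mapsto-r$ reverses an increasing order, the sorted first coordinates of the new integrand are obtained from the old \emph{second} coordinates through the reversal permutation $\rho_n$, whereas the new second coordinates are the old, already sorted, first coordinates. Reordering the $n$ factors to restore the increasing-first-coordinate convention of \eqref{eq:def-2d-simplex} then rewrites the integral as a genuine signature of $X$: reading off the letter carried by each factor gives the transformed word through the right action $w\mapsto w_\rho$ of Notation~\ref{not:shuffle-on-words}, and reading off the new first-to-second coordinate pairing gives the transformed permutation. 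Combining the reflection (which contributes $\rho_n$) with the axis swap (which contributes the inverse of $\nu$) produces the permutation $\nu^{-1}\circ\rho_n$ of part~(i); the representation \eqref{eq:rep of 2d sig with simplex permutation} is a convenient device for this bookkeeping.

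Third, for $\theta=\pi$ and $\theta=3\pi/2$ I would iterate part~(i). Applying the base-case identity to $Y=X^{\pi/2}$ and then again to $X$, the signs multiply to $(-1)^{2n}=1$ for $\theta=\pi$ and to $(-1)^{3n}=(-1)^n$ for $\theta=3\pi/2$, while the permutations compose; using $\rho_n^{-1}=\rho_n$ and $\rho_n^2=\id$ one simplifies the resulting conjugations to the permutations listed in (ii) and (iii). Alternatively each case can be obtained directly from the explicit maps $\phi_{\pi}(t)=(-t_1,-t_2)$ and $\phi_{3\pi/2}(t)=(t_2,-t_1)$, which give $\partial_{12}(X^{\pi})_{(t_1,t_2)}=\partial_{12}X_{(-t_1,-t_2)}$ and $\partial_{12}(X^{3\pi/2})_{(t_1,t_2)}=-\partial_{12}X_{(t_2,-t_1)}$, followed by the same reflection/reordering argument.

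I expect the main obstacle to be the permutation bookkeeping in the second step. One must keep the reordering permutation that acts on the \emph{word} consistent with the pairing permutation $\nu$, and correctly track how the reflection $r\mapsto-r$ (contributing the reversal $\rho_n$) interacts with the interchange of the two axes (contributing $\nu^{-1}$). Getting the order of composition and the placement of inverses right — in particular the conjugations by $\rho_n$ that appear when iterating to reach $\pi$ and $3\pi/2$ — is the delicate point, whereas the analytic ingredients (the chain rule, the symmetry of $\partial_{12}$, and the unit-Jacobian change of variables) are entirely routine.
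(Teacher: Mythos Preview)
Your proposal is correct and follows essentially the same route as the paper: establish the $\pi/2$ case by the chain-rule identity $\partial_{12}(X^{\pi/2})_{(t_1,t_2)}=-\partial_{12}X_{(-t_2,t_1)}$ and a change of variables that introduces the reversal $\rho_n$, then obtain (ii) and (iii) by iterating (i) and simplifying the composed permutations using $\rho_n=\rho_n^{-1}$. The paper's proof is exactly this, with the permutation identities $(\nu^{-1}\circ\rho_n)^{-1}\circ\rho_n=\rho_n^{-1}\circ\nu\circ\rho_n$ and $(\rho_n^{-1}\circ\nu\circ\rho_n)^{-1}\circ\rho_n=\rho_n^{-1}\circ\nu^{-1}$ recorded explicitly for the iteration step.
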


\begin{proof}

It is readily checked from Definition  \ref{def:rotation}  that a $90$ degrees rotation is given by the coordinate change $(t_1,t_2)\mapsto X^{\pi/2}(t_1,t_2)= X(-t_2,t_1)$.  
By elementary calculus rules, in particular repeated change of variables, and using the fact that for $a<b$ we have $\int_b^a g = -\int_a^b g$ for some integrable function $g$, one can check that the following relation holds for a sequence of integrable functions $\{g^i\}_{i=1}^n$
    \begin{equation}
        \int_{\Delta^n_{s,t}} \prod_{i=1}^n g^i(-r^i)\dd r^i = \int_{\Delta^n_{-t,-s}} \prod_{i=1}^n g^i(u^{n-i+1})\dd u^i
    \end{equation}
    Furthermore, we use that $\dd (X^{\pi/2})^{w_k}_{(r_1,r_2)}= -\dd X^{w_k}_{-r_2,r_1}$, and find the following 
    \begin{equation*}
    \begin{aligned}
         \la \bS_{\bs\bt}(X^{\pi/2}),(w,\nu)\ra &= \int_{\Delta^n_{s_1,t_1}} \int_{\Delta^n_{s_2,t_2}\nu^{-1}} \prod_{k=1}^n \dd (X^{\pi/2})^{w_k}_{(r_1^k,r_2^k)} 
         \\
         &=(-1)^n\int_{\Delta^n_{s_1,t_1}} \int_{\Delta^n_{s_2,t_2}} \prod_{k=1}^n \dd X^{w_k}_{(-r_2^{\nu(k)},r_1^k)} 
             \\
         &=(-1)^n
         \int_{\Delta^n_{s_1,t_1}} 
         \int_{\Delta^n_{s_2,t_2}} 
         \prod_{k=1}^n \dd X^{ w_{\nu^{-1}(k)} }_{(-r_2^k,r_1^{\nu^{-1}(k)})} 
         \\
           &=(-1)^n
           \int_{\Delta^n_{s_1,t_1}} 
           \int_{\Delta^n_{-t_2,-s_2} } 
           \prod_{k=1}^n \dd X^{w_{\nu^{-1}(k)}}_{(u_2^{n-k+1},r_1^{\nu^{-1}(k)})} 
            \\
           &=(-1)^n\int_{\Delta^n_{s_1,t_1}} \int_{\Delta^n_{-t_2,-s_2} } 
           \prod_{\ell=1}^n \dd X^{w_{\nu^{-1}(n-\ell+1)}}_{(u_2^{\ell},r_1^{\nu^{-1}(n-\ell+1)})}  \\
           &=
           (-1)^n\la \bS_{(-t_2,s_1),(-s_2,t_1)}(X),
           (w_{\nu^{-1}\circ\rho_n}, \nu^{-1} \circ \rho_n)
           \ra.  
\end{aligned}
\end{equation*}
Note that, the number $-s_2$ is now inserted in the spot where $t_1$ usually is, and similarly for the rest of the variables, reflecting the rotation of the domain. 
And in particular, choosing $[\bs,\bt]=[-1,1]^2$ we see that 
\[
 \la \bS_{-\mathbf{1}\mathbf{1}}(X^{\pi/2}),(w,\nu)\ra =(-1)^n \la \bS_{\mathbf{-1}\mathbf{1}}(X),
 (w_{\nu^{-1}\circ\rho_n}, \nu^{-1} \circ \rho_n)
 \ra. 
\]
This proves item (i).

Now, the items (ii) and (iii) follow from
iterated application of (i)
and noting that the following two relations hold
\begin{align*}
    (\nu^{-1} \circ \rho_n)^{-1} \circ \rho_n
    =
    \rho_n^{-1} \circ \nu \circ \rho_n, \qquad  (\rho_n^{-1} \circ \nu \circ \rho_n)^{-1} \circ \rho_n
    =
    \rho_n^{-1} \circ \nu^{-1}.
\end{align*}
where we have used that $\rho_n = \rho_n^{-1}$ for the second relation. 
\end{proof}

\subsubsection{On homotopy invariance}
\label{sssect:homotopy}

As we
see in \Cref{ss:shuffle},
there are some parallels
to the iterated integrals defined in 
\cite{horozov2015noncommutative}.
The latter are shown to be homotopy invariant,
which is \emph{not} the case for the integrals defined here.
\begin{example}\label{ex:homotopy}
    Consider the map
    \begin{align*}
        H_t(r_1,r_2) = (t \sin(\pi r_2) r_1, t \sin(\pi r_1) r_2).
    \end{align*}
    It is a homotopy between the zero map and the map
    \begin{align*}
        X_{r_1,r_2} := (\sin(\pi r_2) r_1, \sin(\pi r_1) r_2),
    \end{align*}
    which preserves the boundary on $[0,1]^2$.
    Now, the integrals of the zero map are all $0$.
    If the signature were homotopy invariant, we would in particular have
    \begin{equation}\label{eq:zero cond}
        \la \bS_{\mathbf{0},\mathbf{1}}(X),((1,2),\id)\ra =0. 
    \end{equation}
    In order to compute the left hand side of \eqref{eq:zero cond}, observe from our \Cref{not:2d-diff} that 
    \begin{align*}
        \dd^1 X_{r_1,r_2} &= \pi \cos(\pi r_2) \dd r_1\dd r_2
        \\
          \dd^2 X_{r_1,r_2} &= \pi \cos(\pi r_1) \dd r_1\dd r_2.
    \end{align*}
    Therefore, relation \eqref{d5} and some elementary computations show that 
    \begin{align*}
       \la \bS_{\mathbf{0}\mathbf{1}}(X), ((1,2), \id) \ra 
       &=
       \pi \int_0^1 ds_1 \int_0^1 ds_2\ \sin(\pi s_2) s_1 \cos(\pi s_1) \\
       &=
       \pi \left( \int_0^1 ds_1\ s_1 \cos(\pi s_1)  \right)
       \left( \int_0^1 ds_2\ \sin(\pi s_2) \right) \\
       &=
       \pi \left( - \frac2{\pi^2} \right)
       \left( \frac2\pi \right) \\
       &=
       - \frac{4}{\pi^2} \not= 0.
    \end{align*}
    Therefore relation \eqref{eq:zero cond} is not fulfilled, which concludes the lack of homotopy invariance. 
\end{example}
\Cref{ex:homotopy} points to a fundamental difference between Horozov's approach in \cite{horozov2015noncommutative} and ours. Indeed, the integrals of Horozov differ from ours in particular
in the form of the differentials being integrated: ours are second-order, his are first order.
Even when using first first order differentials,
the integrals are, in general, not homotopy invariant.
That they are in \cite{horozov2015noncommutative}
stems from the fact
that the ambient dimension there (our $d$) is equal to $2$,
and in this case all homotopies are \emph{thin} homotopies.


\subsection{Continuity of the 2D signature}

Continuity concerning the signal is an essential part of rough paths theory as well as classical analysis (see e.g.~\cite[Prop.~2.8]{FV}). We derive this type of continuity property here for the 2D signature of sufficiently smooth fields. 
 
\begin{prop}
\label{prop:decay}
Let $X,\tilde{X}$ be two fields in $\cC^2([0,T]^2;\RR^d)$. Consider the corresponding 2D signatures, $ \bS(X)$ and $\bS(\tilde{X})$, given in Definition \ref{def:extended Sig}.  Let also $(w,\tau)\in \hat{\cW}_n$, where $\hat{\cW}_n$ is given as in Definition \ref{def:2d words}.  Then the following estimate holds for any  $(\bs,\bt)\in \Delta^2_{[0,T]^2}$: 
\begin{equation}
\label{eq:local lip}
       | \la \bS_{\bs\bt}(X)-\bS_{\bs\bt}(\tilde{X}),(w,\tau)\ra| \leq \frac{T^{2n}}{(n!)^2} \sum_{k=1}^n
       \lp \prod_{i=1}^{k-1}\|X^{w_i}\|_{\cC^2} \prod_{j=k+1}^n  \|\tilde{X}^{w_j}\|_{\cC^2} \rp \|X^{w_k}-\tilde{X}^{w_k}\|_{\cC^2}.
\end{equation}
In particular, the mapping $X\mapsto \bS_{\bs\bt}(X)$ is locally Lipschitz. 
\end{prop}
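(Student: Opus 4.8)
The plan is to reduce the estimate to a pointwise telescoping identity at the level of the integrand, followed by uniform sup-norm bounds and the explicit computation of the Lebesgue measure of the simplex $\Delta^n_{[\bs,\bt]}$. First I would record the explicit integral representation coming from Definition~\ref{def:extended Sig} together with Notation~\ref{not:2d-diff}: expanding each differential as $\dd X^{w_i}_{r_1^i,r_2^{\tau_i}}=\partial_{12}X^{w_i}_{r_1^i,r_2^{\tau_i}}\,\dd r_1^i\,\dd r_2^{\tau_i}$ and noting that $\tau$ permutes the second-coordinate variables (so the product measure is unchanged), one gets
\[
\la \bS_{\bs\bt}(X),(w,\tau)\ra
=
\int_{\Delta^n_{[\bs,\bt]}} \prod_{i=1}^n \partial_{12}X^{w_i}_{r_1^i,r_2^{\tau_i}}\, \prod_{i=1}^n \dd r_1^i\, \dd r_2^i,
\]
and the analogous formula for $\tilde{X}$. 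Subtracting the two, the quantity $\la \bS_{\bs\bt}(X)-\bS_{\bs\bt}(\tilde X),(w,\tau)\ra$ becomes the integral over $\Delta^n_{[\bs,\bt]}$ of $\prod_i a_i-\prod_i b_i$, where $a_i=\partial_{12}X^{w_i}_{r_1^i,r_2^{\tau_i}}$ and $b_i=\partial_{12}\tilde X^{w_i}_{r_1^i,r_2^{\tau_i}}$.

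Next I would apply, pointwise inside the integral, the standard telescoping identity
\[
\prod_{i=1}^n a_i - \prod_{i=1}^n b_i
=
\sum_{k=1}^n \lp\prod_{i=1}^{k-1} a_i\rp (a_k-b_k) \lp\prod_{j=k+1}^n b_j\rp.
\]
The crucial observation is that $\tau$ is the same for $X$ and $\tilde X$, so the telescoping compares the two fields factor-by-factor at identical base points; each difference $a_k-b_k=\partial_{12}\bigl(X^{w_k}-\tilde X^{w_k}\bigr)$, evaluated at some point of $[0,T]^2$, is bounded in absolute value by $\|X^{w_k}-\tilde X^{w_k}\|_{\cC^2}$, uniformly in the integration variables. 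Similarly $|a_i|\le\|X^{w_i}\|_{\cC^2}$ and $|b_j|\le\|\tilde X^{w_j}\|_{\cC^2}$, since the mixed second derivative $\partial_{12}$ is dominated by the $\cC^2$ norm.

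It then remains to integrate these constant bounds over $\Delta^n_{[\bs,\bt]}$. Because this set factorises, by \eqref{eq:def-2d-simplex}, as a product of an ordered one-dimensional simplex in the first coordinates and one in the second coordinates, its measure is $\frac{(t_1-s_1)^n}{n!}\cdot\frac{(t_2-s_2)^n}{n!}\le \frac{T^{2n}}{(n!)^2}$. Pulling the uniform sup-norm bounds out of the integral and multiplying by this volume yields precisely \eqref{eq:local lip}. The local Lipschitz claim then follows immediately: restricting to a ball $\{\,\|X\|_{\cC^2}\le M\,\}$ bounds every prefactor $\prod_{i<k}\|X^{w_i}\|_{\cC^2}\prod_{j>k}\|\tilde X^{w_j}\|_{\cC^2}$ by $M^{n-1}$, so the right-hand side of \eqref{eq:local lip} is controlled by a constant depending only on $n,T,M$ times $\sum_{k=1}^n\|X^{w_k}-\tilde X^{w_k}\|_{\cC^2}$.

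The argument is essentially routine, so I do not expect a genuine obstacle; the only points demanding care are the bookkeeping around the permutation $\tau$ (verifying that the telescoping is legitimate precisely because the same evaluation pattern is shared by both fields) and the correct identification of the volume of $\Delta^n_{[\bs,\bt]}$ as the \emph{product} of two one-dimensional ordered simplex volumes, which is what produces the $(n!)^{-2}$ factor rather than the $(n!)^{-1}$ one would get from a single $n$-dimensional simplex.
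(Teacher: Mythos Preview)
Your proposal is correct and follows essentially the same route as the paper: the authors also write the difference of signature coefficients as an integral of $\prod a_i-\prod b_i$, apply the telescoping identity, bound each factor by the relevant $\cC^2$ norm, and integrate over the simplex with volume $T^{2n}/(n!)^2$. Your write-up is in fact slightly more explicit than the paper's about why the permutation $\tau$ is harmless and why the product structure of $\Delta^n_{[\bs,\bt]}$ yields the $(n!)^{-2}$ factor.
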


\begin{proof}
Observe that for two sequences of numbers, $\{a_i\}_{i=1}^n$ and $\{b_i\}_{i=1}^n$, we have that 
\begin{equation}\label{f1}
    \prod_{i=1}^n a_i -\prod_{i=1}^n b_i = \prod_{i=1}^{n-1}a_i (a_n-b_n)+ \left(\prod_{i=1}^{n-1} a_i-\prod_{i=1}^{n-1}b_i\right) b_n. 
\end{equation}
Now consider a generic word $(w,\tau)\in \hat{\cW}_n$  and $(\br^1,\ldots ,\br^n)\in \Delta^n_{[\bs,\bt]}$. Recalling Notation \ref{not:2d-deriv} for partial derivatives and invoking relation~\eqref{f1}, we have 
\begin{multline*}
        \left|\prod_{i=1}^n \partial_{12} X^{w_i}_{r_1^i,r_2^{\tau(i)}} -\prod_{i=1}^n \partial_{12} \tilde{X}^{w_i}_{r_1^i,r_2^{\tau(i)}}\right|  \\
    \leq \sum_{k=1}^n \left| \prod_{i=1}^{k-1}\partial_{12} X^{w_i}_{r_1^i,r_2^{\tau(i)}} \prod_{j=k+1}^n \partial_{12} \tilde{X}^{w_j}_{r_1^j,r_2^{\tau(j)}} \right|\left| \partial_{12} X^{w_k}_{r_1^k,r_2^{\tau(k)}}-\partial_{12} \tilde{X}^{w_k}_{r_1^k,r_2^{\tau(k)}} \right|. 
\end{multline*}
This leads us to conclude that 
\[
      \left|\prod_{i=1}^n \partial_{12} X^{w_i}_{r_1^i,r_2^{\tau(i)}} -\prod_{i=1}^n \partial_{12} \tilde{X}^{w_i}_{r_1^i,r_2^{\tau(i)}}\right| \leq \sum_{k=1}^n\prod_{i=1}^{k-1}\|X^{w_i}\|_{\cC^2} \prod_{j=k+1}^n  \|\tilde{X}^{w_i}\|_{\cC^2}  \|X^{w_k}-\tilde{X}^{w_k}\|_{\cC^2}, 
    \]
Inserting this into the signature definition, and using that 
\[
\int_{\Delta^n_{[\mathbf{0},\bT]}}1 \dd \br^1\cdots \dd \br^n = \frac{T^2n}{(n!)^2}, 
\]
we obtain 
\begin{align*}
     | \la \bS_{\bs\bt}(X)-\bS_{\bs\bt}(\tilde{X}),(w,\tau)\ra|\leq& \int_{\Delta^n_{[\mathbf{0},\bT]}}  \left|\prod_{i=1}^n \partial_{12} X^{w_i}_{r_1^i,r_2^{\tau(i)}} -\prod_{i=1}^n \partial_{12} \tilde{X}^{w_i}_{r_1^i,r_2^{\tau(i)}}\right|  \dd \br^1\cdots \dd \br^n 
     \\
     \leq& \frac{T^{2n}}{(n!)^2} \sum_{k=1}^n
       \prod_{i=1}^{k-1}\|X^{w_i}\|_{\cC^2} \prod_{j=k+1}^n  \|\tilde{X}^{w_j}\|_{\cC^2}  \|X^{w_k}-\tilde{X}^{w_k}\|_{\cC^2},
\end{align*}
which is our claim \eqref{eq:local lip}. This finishes the proof. 
\end{proof}


\subsection{Products of iterated integrals -- 2D shuffle}
\label{ss:shuffle}

Our interpretation of the shuffle property is that a product of two elements in the signature should be expressed as a linear combination of elements in the signature. As already mentioned in Remark \ref{rem:two issues of ID sig}, one of the main reasons for introducing the notion of the full 2D signature is related to the shuffle property. We will now state this relation in full generality. Notice that a similar result appears in \cite[Proposition 1.3]{horozov2006noncommutative} without proof. We include a complete argument here for the sake of clarity. 
We note that the two-dimensional
shuffle algebra has also recently been investigated
in \cite{schmitz2024free},
in particular in its relation to the
sums signature of \cite{diehl2022two}.

Before stating our main result on shuffle products, let us rephrase the shuffle notation in \eqref{eq:shuffle-permut}  and $(\sigma\star \tau)\circ \rho^{-1}$ in \eqref{eq:prod of delta} directly in terms of the permutations $\sigma,\tau$. This will shorten some of our notation in the computations throughout the section. 

\begin{nota}\label{not:ee}
Recall \Cref{def:shuffle} about permutations. Then consider 
\begin{equation}
    \sigma\in \Sigma_{\{1,\ldots,n\}}, \quad \mathrm{and} \quad \tau \in \Sigma_{\{n+1,\ldots,n+k\}}
\end{equation}
    We call $\sh(\si,\tau)$ the following set of permutations of $\{1,\ldots,n+k\}$
\begin{align*}
    \sh(\si,\tau) :=  \{ \rho\in \Sigma_{\{1,\ldots,n+k\}} \mid \rho\,\, \textrm{does not change the order of $\tau$ and $\sigma$} \}.
\end{align*}
    Note that this can equivalently be defined as 
\begin{align*}
    \sh(\si,\tau) :=  \{ (\sigma \ast \tau) \circ \rho^{-1} \mid \rho \in \sh(n,k) \} ,
\end{align*}
where we have used that any $\rho \in \sh(\si,\tau)$ can be written as $\rho = (\si\ast \tau)\circ \hat{\rho}^{-1}$ for a given element $\hat{\rho}\in \sh(n,k)$. 
\end{nota}
\begin{rem}
    Thanks to \Cref{not:ee} one can rephrase Proposition \ref{prop:shuffle operators} in a slightly more compact way. That is, for $\si\in \Sigma_{\{1,\ldots,n\}}$ and $\tau\in \Sigma_{\{1,\ldots,n'\}}$ we have 
   \begin{equation}\label{eq:fffff}
       \int_{\Delta^n_{s,t}\si} \star \int_{\Delta^{n'}_{s,t}} = \sum_{\phi\in \sh(\si,\tau)} \int_{\Delta^{n+n'}_{s,t}\phi}. 
    \end{equation}
\end{rem}
We are now ready to state the shuffle property for the full signature $\bS$.

\begin{prop}[Shuffle relation]\label{prop:shuffle relation}
Let $X$ be a field in $\cC^2([0,T]^2;\RR^d)$ and $\bS(X)$ be its 2D signature (Definition \ref{def:extended Sig}). We consider two words, $(w,\nu)\in \hat{\cW}_n$ and $(w',\nu')\in \hat{\cW}_{n'}$ (Definition \ref{def:2d words}).
Then for $(\bs,\bt)\in \Delta^2_{[0,T]^2}$ we have 
\begin{equation}
\label{eq:shuffle relation final}
    \langle \bS_{\bs\bt}(X),(w,\nu)\rangle 
   \langle \bS_{\bs\bt}(X),(w',\nu')\rangle  
   =\sum_{\tau\in \sh(\id_n,\id_{n'})} \sum_{\lambda\in \sh(\nu,\nu')}      \langle \bS_{\bs\bt}(X),([ww']_\tau,\lambda\circ\tau) \rangle ,
\end{equation}
where we recall Notation \ref{not:shuffle-on-words} for the concatenation $[ww']$. In~\eqref{eq:shuffle relation final}, we also recall that for a word $w$ with $|w|=n$ and a permutation $\tau \in \Sigma_{\{1,\ldots,n\}}$ we set $w_\tau=(w_{\tau(1)},\ldots ,w_{\tau(n)})$.
\end{prop}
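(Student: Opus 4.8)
The plan is to reduce the two-dimensional shuffle to the one-dimensional shuffle of permuted integral operators (Proposition~\ref{prop:shuffle operators}), after decoupling the two parameter directions by means of the interchange law (Lemma~\ref{lem:interchange}). The natural starting point is the representation \eqref{eq:rep of 2d sig with simplex permutation}, which presents each factor as an interlaced integral whose first parameter runs over a standard simplex and whose second parameter runs over a permuted one:
\begin{equation*}
\langle\bS_{\bs\bt}(X),(w,\nu)\rangle
=\left(\int_{\Delta^n_{s_1,t_1}}\starI \int_{\Delta^n_{s_2,t_2}\nu^{-1}}\right)\prod_{i=1}^n \dd X^{w_i}_{r_1^i,r_2^i},
\end{equation*}
and likewise for $(w',\nu')$. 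As each pairing is a scalar, Fubini's theorem together with the convention \eqref{b4} lets me write the product as the single $\star$-composition of the two interlaced operators, applied to the concatenated integrand $\prod_{i=1}^{n+n'}\dd X^{[ww']_i}$, the interlacing pairing the $i$-th first variable with the $i$-th second variable.

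First I would apply the interchange law (Lemma~\ref{lem:interchange}) to recast this $\star$-composition of two interlaced operators as a single interlacing of one first-parameter operator and one second-parameter operator, namely
\begin{equation*}
\left(\int_{\Delta^n_{s_1,t_1}}\star \int_{\Delta^{n'}_{s_1,t_1}}\right)\starI\left(\int_{\Delta^n_{s_2,t_2}\nu^{-1}}\star \int_{\Delta^{n'}_{s_2,t_2}\nu'^{-1}}\right).
\end{equation*}
Next I would expand each inner $\star$-product with the one-dimensional shuffle relation \eqref{eq:fffff}. In the first direction both simplexes are unpermuted, so this yields $\sum_{\tau\in\sh(\id_n,\id_{n'})}\int_{\Delta^{n+n'}_{s_1,t_1}\tau}$; in the second direction it yields $\sum_{\phi\in\sh(\nu^{-1},\nu'^{-1})}\int_{\Delta^{n+n'}_{s_2,t_2}\phi}$. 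Distributing the two sums leaves a double sum of terms $\int_{\Delta^{n+n'}_{s_1,t_1}\tau}\starI\int_{\Delta^{n+n'}_{s_2,t_2}\phi}$ applied to $\prod_{i}\dd X^{[ww']_i}$.

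It then remains to recognise each term as a single coordinate of the full signature. Relabelling the $n+n'$ integration variables through $\tau$ restores the increasing order in the first parameter; this reorders the letters of $[ww']$ into $[ww']_\tau$ (Notation~\ref{not:shuffle-on-words}) and carries the second-parameter domain $\Delta^{n+n'}_{s_2,t_2}\phi$ to $\Delta^{n+n'}_{s_2,t_2}(\tau^{-1}\phi)$. Matching this against \eqref{eq:rep of 2d sig with simplex permutation} exhibits the term as $\langle\bS_{\bs\bt}(X),([ww']_\tau,\phi^{-1}\circ\tau)\rangle$. Summing over $\tau$ and $\phi$ should then reproduce the right-hand side of \eqref{eq:shuffle relation final}, provided the second-slot factor $\phi^{-1}$ is re-indexed as a $\lambda\in\sh(\nu,\nu')$.

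The genuinely delicate point, and the main obstacle, is exactly this re-indexing: one must verify that $\{\phi^{-1}\ :\ \phi\in\sh(\nu^{-1},\nu'^{-1})\}$ coincides with $\sh(\nu,\nu')$, so that the second-slot permutation ranges over the set claimed in \eqref{eq:shuffle relation final} and enters paired with $\tau$ precisely as $\lambda\circ\tau$. This is a purely combinatorial identity about shuffle sets, to be settled through the $\ast$-product algebra and the order-preserving characterisation recorded in \Cref{not:ee} (in particular $(\sigma\ast\tau)^{-1}=\sigma^{-1}\ast\tau^{-1}$) together with the simplex decomposition \eqref{eq:prod of delta} of \Cref{lem:prod of simplex}. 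All the care lies in keeping the inverses and the order of composition straight; the analytic content is exhausted by Fubini's theorem and the elementary simplex identities already established.
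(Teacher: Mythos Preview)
Your plan is essentially the paper's own proof: start from the representation \eqref{eq:rep of 2d sig with simplex permutation}, apply the interchange law (Lemma~\ref{lem:interchange}) to separate the two parameter directions, expand each factor via the one-dimensional shuffle \eqref{eq:fffff}, then straighten the first-direction simplex by a change of variable and re-index the second-direction permutation as $\lambda=\phi^{-1}$. The paper carries this out with the same sequence of moves and the same bookkeeping device (substituting $\lambda=\sigma^{-1}$, $\tau=\rho^{-1}$ at the end), and likewise flags the inversion step ``$\sigma\in\sh(\nu^{-1},\nu'^{-1})$ iff $\sigma^{-1}\in\sh(\nu,\nu')$'' as the combinatorial hinge; your identification of this as the delicate point is exactly right.
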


\begin{proof}
For two generic words $(w,\nu)\in \hat{\cW}_n$ and $(w',\nu')\in \hat{\cW}_{n'}$, let us denote by $\cG_{\bs\bt}$ the quantity of interest, i.e.
\[
    \cG_{\bs\bt} :=  \langle \bS_{\bs\bt}(X),(w,\nu)\rangle \langle \bS_{\bs\bt}(X),(w',\nu')\rangle.  
\]
Owing to relation \eqref{eq:rep of 2d sig with simplex permutation} one can write 
\[
\cG_{\bs\bt} = \left( \int_{\Delta^n_{s_1,t_1}}\starI \int_{ \Delta^n_{s_2,t_2}\nu^{-1}} \right) \star \left( \int_{\Delta^{n'}_{s_1,t_1}}\starI \int_{ \Delta^{n'}_{s_2,t_2}\nu'^{-1}} \right) \prod_{i=1}^n \dd X^{w_i}_{\br} \prod_{j=1}^{n'} \dd X^{w'_j}_{\br^j},
\]
which, by \Cref{lem:interchange},
is equal to
   \[
\cG_{\bs\bt}
= \left( \int_{\Delta^n_{s_1,t_1}}\star \int_{\Delta^{n'}_{s_1,t_1}} \right) \starI \left( \int_{ \Delta^n_{s_2,t_2}\nu^{-1}} \star \int_{ \Delta^{n'}_{s_2,t_2}\nu'^{-1}} \right) \prod_{i=1}^n \dd X^{w_i}_{\br} \prod_{j=1}^{n'} \dd X^{w'_j}_{\br^j}
\] 
We now apply relation \eqref{eq:fffff} separately for the integrals over $[s_1,t_1]$ and $[s_2,t_2]$.  This yields
\begin{equation}\label{G rel}
    \cG_{\bs\bt} = \sum_{\rho\in \sh(\id_n,\id_{n'}) } \sum_{\sigma\in \sh(\nu^{-1},\nu'^{-1})} \cG_{\bs\bt} ^{\rho,\sigma}, 
\end{equation}
where we define 
\begin{equation}\label{G def}
    \cG_{\bs\bt} ^{\rho,\sigma} = \int_{\Delta^{n+n'}_{s_1,t_1}\rho \times \Delta^{n+n'}_{s_2,t_2}\sigma} \prod_{i=1}^{n+n'} \dd X^{\hw_i}_{\br^i}, 
\end{equation}
and we have set $\id_n = \id_{\{1,\ldots ,n\}}$ and  $\hw = [ww']$. To express every element in \eqref{G rel}
as an element of the 2D signature like \eqref{eq:rep of 2d sig with simplex permutation}, we now replace the simplexes $\Delta^{n+n'}_{s_1,t_1}\rho$ by $\Delta^{n+n'}_{s_1,t_1}$ in the definition~\eqref{G def} of $\cG_{\bs\bt} ^{\rho,\sigma}$. To this aim, we simply declare that $r_1^k = r_1^{\rho_k}$ and perform the corresponding change in the multi-index $\hw$. Namely, we define a new word $w^\rho$ by setting 
\[
    \hw_{\rho^{-1}} = (\hw_{\rho^{-1}_1},\ldots, \hw_{\rho^{-1}_{n+n'}}).
\]
This simple change of variables enables us to write 
\begin{equation}
\label{f2}
    \cG^{\rho,\sigma}_{\bs\bt} = 
\int_{\Delta^{n+n'}_{s_1,t_1}\times \Delta^{n+n'}_{s_2,t_2}\sigma} 
\prod_{i=1}^{n+n'} \dd X^{\hw_i}_{r_1^{i},r_2^{\rho_{i}^{-1}}}  = 
\int_{\Delta^{n+n'}_{s_1,t_1}\times \Delta^{n+n'}_{s_2,t_2}\rho\circ \sigma} \prod_{i=1}^{n+n'} \dd X^{\hw_{\rho^{-1}(i)}}_{r_1^{i},r_2^{i}}.  
\end{equation}
And so we can rewrite~\eqref{f2} resorting to the representation in \eqref{eq:rep of 2d sig with simplex permutation}, in order to obtain 
\begin{equation}\label{eq:aaa}
\cG^{\rho,\sigma}_{\bs\bt} = \la \bS_{\bs\bt}(X),(\hw_{\rho^{-1}},\sigma^{-1}\circ\rho^{-1})\ra.    
\end{equation}
Gathering \eqref{eq:aaa} and \eqref{G rel}, we have thus obtained 
\begin{equation*}
      \cG_{\bs\bt} = \sum_{\rho\in \sh(\id_n,\id_{n'}) } \sum_{\sigma\in \sh(\nu^{-1},\nu'^{-1})}\la \bS_{\bs\bt}(X),(\hw_{\rho^{-1}},\sigma^{-1}\circ\rho^{-1})\ra.
\end{equation*}
Next, notice that $\sigma\in \sh(\nu^{-1},\nu'^{-1})$ iff $\sigma^{-1}\in \sh(\nu,\nu')$. Setting $\lambda=\sigma^{-1}$  and similarly $\tau=\rho^{-1}$ above we end up with 
\[
    \cG_{\bs\bt} = \sum_{\tau\in \sh(\id_n,\id_{n'}) } \sum_{\lambda\in \sh(\nu,\nu')}\la \bS_{\bs\bt}(X),(\hw_\tau,\lambda\circ \tau)\ra, 
\]
which is exactly the relation in \eqref{eq:shuffle relation final}, and so concludes our proof. 
\end{proof}

As an application of the shuffle relation in Proposition \ref{prop:shuffle relation}, we will now derive a formula for products of rectangular increments. The proposition below should be seen as an integration by parts formula for 2D-increments. 

\begin{prop}
Under the same conditions and using the same notation as in Proposition~\ref{prop:shuffle relation}, recall that the rectangular increments $\square X$ are given in \Cref{def:squareincrement}. Then for $w=(i_1,\ldots,i_n)\in \cW_n$ and $(\bs,\bt)\in \Delta^2_{[0,T]^2}$ we have 
\begin{equation}
\label{eq:prod to sig}
    \prod_{k=1}^n \square_{\bs\bt} X^{w_k} 
        = 
    \sum_{\nu,\nu'\in \Sigma_{\{1,\ldots,n\}}} \la \bS_{\bs\bt}(X),(w_\nu,\nu')\ra .
    \end{equation}
\end{prop}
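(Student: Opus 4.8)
The plan is to turn the product of increments into a single integral over a pair of $n$-fold hypercubes, slice each hypercube into permuted simplexes using \Cref{lem:simplex to hypercubes}, and then read off each slice as one coordinate of the full signature via the representation~\eqref{eq:rep of 2d sig with simplex permutation}. This is the two-directional analogue of the manipulation already carried out in the proof of \Cref{prop:shuffle relation}.

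First I would rewrite each factor. By the change of variables formula~\eqref{eq:change-variable} (equivalently, the fundamental theorem of calculus) one has $\square_{\bs\bt}X^{w_k}=\int_{[\bs,\bt]}\dd X^{w_k}_{\br^k}$, so Fubini's theorem lets the product factorize over $n$ independent copies of the rectangle $[\bs,\bt]$. Separating the first and second coordinates and using the interlacing operator of \Cref{not:star}, this reads
\[
\prod_{k=1}^n \square_{\bs\bt}X^{w_k}
=
\Big(\int_{[s_1,t_1]^n}\Big)\starI\Big(\int_{[s_2,t_2]^n}\Big)\prod_{k=1}^n \dd X^{w_k}_{r_1^k,r_2^k}.
\]

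Next I would apply \Cref{lem:simplex to hypercubes} in each direction, writing $\int_{[s_1,t_1]^n}=\sum_{\rho}\int_{\Delta^n_{s_1,t_1}\rho}$ and $\int_{[s_2,t_2]^n}=\sum_{\sigma}\int_{\Delta^n_{s_2,t_2}\sigma}$, with $\rho,\sigma$ ranging over $\Sigma_{\{1,\ldots,n\}}$. Since these unions of simplexes are disjoint up to null sets, the interlacing distributes over the sums and the product becomes $\sum_{\rho,\sigma} I_{\rho,\sigma}$ with
\[
I_{\rho,\sigma}:=\Big(\int_{\Delta^n_{s_1,t_1}\rho}\Big)\starI\Big(\int_{\Delta^n_{s_2,t_2}\sigma}\Big)\prod_{k=1}^n \dd X^{w_k}_{r_1^k,r_2^k}.
\]
The main obstacle, and the step demanding careful index bookkeeping, is to identify each $I_{\rho,\sigma}$ with a single signature coordinate. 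I would relabel the first-coordinate variables by $a^i:=r_1^{\rho(i)}$, so that $(a^1,\ldots,a^n)\in\Delta^n_{s_1,t_1}$ and the factor carrying $a^i$ has letter $w_{\rho(i)}$, and then set $b^i:=r_2^{\rho(i)}$; these are coordinate permutations, hence measure preserving, so no Jacobian arises. The $i$-th factor then reads $\dd X^{(w_\rho)_i}_{a^i,b^i}$, and tracking the constraint $(r_2^1,\ldots,r_2^n)\in\Delta^n_{s_2,t_2}\sigma$ through the substitution $r_2^m=b^{\rho^{-1}(m)}$ turns it into $(b^1,\ldots,b^n)\in\Delta^n_{s_2,t_2}(\rho^{-1}\sigma)$. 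Comparing with~\eqref{eq:rep of 2d sig with simplex permutation} yields $I_{\rho,\sigma}=\langle\bS_{\bs\bt}(X),(w_\rho,\sigma^{-1}\rho)\rangle$.

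Finally I would reindex: the map $(\rho,\sigma)\mapsto(\nu,\nu')=(\rho,\sigma^{-1}\rho)$ is a bijection of $\Sigma_{\{1,\ldots,n\}}\times\Sigma_{\{1,\ldots,n\}}$ onto itself, whence $\sum_{\rho,\sigma}\langle\bS_{\bs\bt}(X),(w_\rho,\sigma^{-1}\rho)\rangle=\sum_{\nu,\nu'}\langle\bS_{\bs\bt}(X),(w_\nu,\nu')\rangle$, which is precisely~\eqref{eq:prod to sig}. As a sanity check and an alternative route, one can instead observe that $\square_{\bs\bt}X^{w_k}=\langle\bS_{\bs\bt}(X),((w_k),\id)\rangle$ and iterate the shuffle relation of \Cref{prop:shuffle relation} on these $n$ single-letter coordinates; the combinatorics of shuffling $n$ trivial permutations reproduces exactly the full double sum over $(\nu,\nu')$.
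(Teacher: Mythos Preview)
Your proof is correct. The approach, however, is genuinely different from the paper's: you argue directly, writing the product of increments as a single integral over the double hypercube $[s_1,t_1]^n\times[s_2,t_2]^n$, applying \Cref{lem:simplex to hypercubes} in each direction, and then identifying each pair $(\rho,\sigma)$ with a single signature coordinate through one clean change of variables. The paper instead proceeds by induction on $n$: it peels off the last factor $\square_{\bs\bt}X^{i_{n+1}}$, invokes the induction hypothesis together with the shuffle machinery of \Cref{prop:shuffle relation} (via the intermediate relation~\eqref{G rel}), and then uses the decomposition $\bigcup_{\nu'\in\Sigma_{\{1,\ldots,n\}}}\sh(\nu'^{-1},\id_1)=\Sigma_{\{1,\ldots,n+1\}}$ to close the induction. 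Your direct route is shorter and avoids the dependence on \Cref{prop:shuffle relation}; the paper's route is essentially the ``alternative'' you sketch in your final sentence, carried out in detail. Both arguments ultimately perform the same index bookkeeping, but yours packages it into a single bijection $(\rho,\sigma)\mapsto(\rho,\sigma^{-1}\rho)$ rather than across an inductive hierarchy.
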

\begin{proof}
    The relation is obvious for $n=1$. We now proceed by induction. Namely, assume relation \eqref{eq:prod to sig}  holds true for any $n\geq 1$. We consider a word $w=(\hw,i_{n+1})$  in $\cW_{n+1}$, where $\hw=(i_1,\ldots i_n)\in \cW_n$ and $i_{n+1}\in \{1,\ldots,d\}$. Denote by $U_{\bs\bt}^{n+1}$ the quantity of interest for the induction procedure, that is, $U^{n+1}_{\bs\bt}=\prod_{k=1}^{n+1} \square_{\bs\bt}X^{i_k}$. Then we trivially have that 
    \[
    U^{n+1}_{\bs\bt}=\left(\prod_{k=1}^{n} \square_{\bs\bt}X^{i_k}\right)\square_{\bs\bt}X^{i_{n+1}}.
    \]
Owing to our induction assumption and the fact that $\square_{\bs\bt} X^{i_k}$ is an element of the signature, this yields 
\begin{equation}
\label{eq:aaa3}
    U^{n+1}_{\bs\bt} = \sum_{\nu,\nu'\in \Sigma_{\{1,\ldots,n\}}} \la \bS_{\bs\bt}(X),(\hw_\nu,\nu')\ra \la \bS_{\bs\bt}(X),(i_{n+1},\id_1)\ra .
\end{equation}
Observe that instead of using \eqref{eq:shuffle relation final} to express every term in \eqref{eq:aaa3}, it is slightly more convenient to invoke \eqref{G rel}. We get 
\begin{equation*}
    \la \bS_{\bs\bt}(X),(\hw_\nu,\nu')\ra \la \bS_{\bs\bt}(X),(i_{n+1},\id_1)\ra  = \sum_{\rho\in \sh(\nu^{-1},\id_{1}) } \sum_{\sigma\in \sh(\nu'^{-1},\id_1)} \int_{\Delta^{n+1}_{s_1,t_1}\rho \times \Delta^{n+1}_{s_2,t_2}\sigma} \prod_{k=1}^{n+1} \dd X_{\br^k}^{i_k}. 
\end{equation*}
Plugging this identity into \eqref{eq:aaa3} we obtain 
\begin{equation}
\label{eq:dd}
         U^{n+1}_{\bs\bt} 
         = \sum_{\nu,\nu'\in \Sigma_{\{1,\ldots,n\}}} \sum_{\rho\in \sh(\nu^{-1},\id_{1}) } \sum_{\sigma\in \sh(\nu'^{-1},\id_1)} \int_{\Delta^{n+1}_{s_1,t_1}\rho \times \Delta^{n+1}_{s_2,t_2}\sigma} \prod_{k=1}^{n+1} \dd X_{\br^k}^{i_k}. 
\end{equation}
In addition, it is easily seen that 
    \begin{equation*}
        \bigcup_{\nu'\in \Sigma_{\{1,\ldots,n\}}}\sh(\nu'^{-1},\id_1) = \Sigma_{\{1,\ldots,n+1\}}. 
    \end{equation*}
    Therefore, one can recast \eqref{eq:dd} as 
    \begin{equation}\label{eq:e}
        U^{n+1}_{\bs\bt} 
         = \sum_{\nu\in \Sigma_{\{1,\ldots,n\}}} \sum_{\rho\in \sh(\nu^{-1},\id_{1}) } \sum_{\tau\in \Sigma_{\{1,\ldots,n+1\}}} \int_{\Delta^{n+1}_{s_1,t_1}\rho \times \Delta^{n+1}_{s_2,t_2}\tau} \prod_{k=1}^{n+1} \dd X_{\br^k}^{i_k}. 
    \end{equation}
We now proceed as in the proof of Proposition \ref{prop:shuffle relation}. Namely, we want to replace the integral over $\Delta^{n+1}_{s_1,t_1}\rho$ by an integral over $\Delta^{n+1}_{s_1,t_1}$. To this end, we perform the same change of variable, $r_1^k=r^{\rho_k}_1$, which leads to a corresponding permutation $\mu$ of the word $[\hw,i_{n+1}]$. We let the reader then check that \eqref{eq:e} can then be read as 
\begin{equation*}
        U_{\bs\bt}^{n+1} 
        = \sum_{\mu,\nu\in \Sigma_{\{1,\ldots,n+1\}}} \int_{\Delta^{n+1}_{s_1,t_1} \times \Delta^{n+1}_{s_2,t_2}\nu} \prod_{k=1}^{n+1} \dd X_{\br^k}^{w_{(\mu^{-1}\circ \nu)(k)}}
        =\sum_{\nu,\nu'\in \Sigma_{\{1,\ldots,n+1\}} }\int_{\Delta^{n+1}_{s_1,t_1} \times \Delta^{n+1}_{s_2,t_2}\nu'} \prod_{k=1}^{n+1} \dd X_{\br^k}^{w_{\nu(k)}}.
\end{equation*}
This proves the induction step for \eqref{eq:prod to sig}, which allows us to conclude. 
\end{proof}


\section{The symmetrized signature}
\label{sec:symetrizedSig}

Up to now, we have introduced two types of 2D signatures for a $\cC^2$-field $X$, namely $\bS^{\id}(X)$ and $\bS(X)$. The 2D signature $\bS^{\id}(X)$ satisfies a Chen-type relation but fails to satisfy the shuffle relation. For $\bS(X)$ the situation is the opposite. In the current section, we introduce another notion of 2D signature which enjoys both Chen and shuffle relations, at the price of integrating over the whole rectangle $[s_2,t_2]^k$ in the second variable (in contrast to the simplex $\Delta^{k}_{s_2,t_2}$). Let us start by defining this new type of 2D signature.

Recall that for $n\geq 0$ we defined the set $\cW_n$ of words of length $n$ in \eqref{def:words} and the set $\hat{\cW}_n$ of extended words in Definition \ref{def:2d words}. Furthermore, the 2D signature $\bS(X)$ is introduced in Definition \ref{def:extended Sig}. 

\begin{defn}
Let $X$ be a field in $\cC^2([0,T]^2;\RR^d)$. For $w\in \cW_n$ and $(\bs,\bt)\in \Delta^2_{[0,T]^2}$ we set 
\begin{equation}\label{eq:def symmetrized sig}
        \la \bS^{\sym}_{\bs\bt}(X),w \ra := \sum_{\nu\in \Sigma_{\{1,\ldots,n\}}} \la \bS_{\bs\bt}(X),(w,\nu)  \ra .
\end{equation}
\end{defn}

\begin{rem}
Notice that for $(\bs,\bt)\in \Delta^2_{[0,T]^2}$ one can also see $\bS^{\sym}_{\bs\bt}(X)$ as an element of $\cT((\RR^d))$. This kind of representation follows closely the lines of Section \ref{ssec:Sigalg}. 
\end{rem}

\begin{rem}
Combining \eqref{eq:def symmetrized sig} with the representation \eqref{eq:rep of 2d sig with simplex permutation} of $\bS(X)$ and Lemma \ref{lem:simplex to hypercubes}, we find for $(\bs,\bt)\in \Delta^2_{[0,T]^2}$
\begin{equation}\label{eq:rep of sym sig as monom}
          \la \bS^{\sym}_{\bs\bt}(X),w \ra = \sum_{\nu\in \Sigma_{\{1,\ldots,n\}}} \int_{\Delta^n_{s_1,t_1}}\starI \int_{\Delta^n_{s_2,t_2}\nu^{-1}} \prod_{i=1}^n \dd X^{w_i}_{\br^i} = \int_{\Delta^n_{s_1,t_1}}\starI \int_{[s_2,t_2]^n} \prod_{i=1}^n \dd X^{w_i}_{\br^i}.
\end{equation}
This formula will be useful for future computations. 
\end{rem}


\subsection{Chen's relation}
\label{sec:chenrelation}

As mentioned above, the symmetrized 2D signature enjoys important algebraic properties. We now prove partial versions of Chen's relation in this context.

\begin{prop}\label{prop:sym chen}
Let $X\in \cC^2([0,T]^2;\RR^d)$ and consider $\bS^{\sym}(X)$ defined by \eqref{eq:def symmetrized sig}. In addition, pick a word $w\in \cW_n$ together with $(\bs,\bt)\in \Delta^2_{[0,t]^2}$ and $s_1\leq u_1 \leq t_1$. Then we have 
\begin{equation}\label{eq:chen for symm}
            \la \delta^1_{u_1} \bS^{\sym}_{\bs\bt}(X),w\ra = \sum_{k=1}^{n-1} \la \bS^{\sym}_{\bs(u_1,t_2)}(X),(w_1\cdots w_k)\ra  \la  \bS^{\sym}_{(u_1,s_2)\bt}(X),(w_{k+1}\cdots w_n)\ra. 
\end{equation}
\end{prop}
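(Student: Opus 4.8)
The plan is to run the argument entirely at the level of integral operators, starting from the monomial representation \eqref{eq:rep of sym sig as monom}. The key structural feature is that, after symmetrization, the $2$-variable is integrated over the \emph{full hypercube} $[s_2,t_2]^n$ rather than a permuted simplex. This is precisely what removes the obstruction flagged in Remark \ref{rem: chen to full} for $\bS^{\id}$: a hypercube factorizes as $[s_2,t_2]^l\times[s_2,t_2]^k$ with no constraint linking the two blocks, so the interchange law applies with no remainder term.

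First I would observe that $\bS^{\sym}_{\bs\bt}(X)$ depends on the pair $(s_1,t_1)$ only through the simplex $\Delta^n_{s_1,t_1}$ in \eqref{eq:rep of sym sig as monom}. Since $\starI$ is additive in its first operator slot over disjoint decompositions of the domain, and the hypercube $[s_2,t_2]^n$ does not depend on $(s_1,t_1)$, the operator $\delta^1_{u_1}$ commutes past both $\starI$ and the $2$-variable integral:
\[
\la \delta^1_{u_1}\bS^{\sym}_{\bs\bt}(X),w\ra
= \lp\delta^1_{u_1}\int_{\Delta^n_{s_1,t_1}}\rp \starI \int_{[s_2,t_2]^n}\prod_{i=1}^n \dd X^{w_i}_{\br^i}.
\]
Applying the algebraic-integration identity \eqref{eq:ff} to the first factor rewrites $\delta^1_{u_1}\int_{\Delta^n_{s_1,t_1}}$ as $\sum_{l+k=n,\, l,k\ge 1}\int_{\Delta^l_{s_1,u_1}}\star\int_{\Delta^k_{u_1,t_1}}$.

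Next, for each term of this sum I would split the $2$-variable domain through \eqref{b4}, writing $\int_{[s_2,t_2]^n}=\int_{[s_2,t_2]^l}\star\int_{[s_2,t_2]^k}$, and then invoke Lemma \ref{lem:interchange} with $A=\Delta^l_{s_1,u_1}$, $A'=\Delta^k_{u_1,t_1}$, $B=[s_2,t_2]^l$, $B'=[s_2,t_2]^k$. This regroups each summand as
\[
\lp\int_{\Delta^l_{s_1,u_1}}\starI\int_{[s_2,t_2]^l}\rp \star \lp\int_{\Delta^k_{u_1,t_1}}\starI\int_{[s_2,t_2]^k}\rp,
\]
the first operator carrying the differentials $\dd X^{w_1}_{\br^1}\cdots\dd X^{w_l}_{\br^l}$ and the second carrying $\dd X^{w_{l+1}}_{\br^{l+1}}\cdots\dd X^{w_n}_{\br^n}$. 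By \eqref{eq:rep of sym sig as monom} the two factors are precisely $\la \bS^{\sym}_{(s_1,s_2),(u_1,t_2)}(X),(w_1\cdots w_l)\ra$ and $\la \bS^{\sym}_{(u_1,s_2),(t_1,t_2)}(X),(w_{l+1}\cdots w_n)\ra$. Relabeling the split index $l$ as $k$ then yields exactly \eqref{eq:chen for symm}.

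The argument is essentially bookkeeping, and I expect no serious obstacle; the two points needing care are (a) justifying that $\delta^1_{u_1}$ passes through $\starI$ and the hypercube integral, which rests on additivity of $\starI$ in its first slot together with the $(s_1,t_1)$-independence of $[s_2,t_2]^n$, and (b) tracking which letters $w_i$ accompany which block of integration variables after the Chen split in the $1$-direction. The whole point of the symmetrization is that converting the $2$-variable simplex into the product hypercube makes the interchange law apply cleanly, which is exactly the step where the $\bS^{\id}$ computation of Remark \ref{rem: chen to full} broke down.
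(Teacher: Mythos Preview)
Your proposal is correct and follows essentially the same route as the paper: start from the monomial representation \eqref{eq:rep of sym sig as monom}, apply $\delta^1_{u_1}$ together with \eqref{eq:ff} to split $\Delta^n_{s_1,t_1}$, factor the hypercube $[s_2,t_2]^n$, and then use the interchange law (the paper phrases this step as ``Fubini's theorem'') to recognize the two factors as symmetrized signature terms. Your emphasis on the hypercube factorization as the mechanism that bypasses the obstruction of Remark~\ref{rem: chen to full} is exactly the point.
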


\begin{proof}
Start from relation \eqref{eq:rep of sym sig as monom} and apply the $\delta^1$ operation from Notation \ref{not:delta} on both sides of the identity. Thanks to \eqref{eq:ff}, we get 
\begin{equation*}
        \la \delta^1_{u_1} \bS^{\sym}_{\bs\bt}(X),w\ra = \sum_{k=1}^{n-1} \lp \int_{\Delta^{k}_{s_1,u_1}}\star \int_{\Delta^{n-k}_{u_1,t_1}} \rp \starI \int_{[s_2,t_2]^n} \prod_{i=1}^n \dd X^{w_i}_{\br^i }. 
\end{equation*}
Now we decompose $[s_2,t_2]^n = [s_2,t_2]^{k} \times [s_2,t_2]^{n-k}$ and apply Fubini's theorem. We get 
\begin{align*}
    \la \delta^1_{u_1} \bS^{\sym}_{\bs\bt}(X),w\ra 
    =& \sum_{k=1}^{n-1} \lp \int_{\Delta^{k}_{s_1,u_1}}\starI \int_{[s_2,t_2]^k}\rp  \star \lp  \int_{\Delta^{n-k}_{u_1,t_1}} \starI \int_{[s_2,t_2]^{n-k}} \rp  \prod_{i=1}^n \dd X^{w_i}_{\br^i }\\
    =&  \sum_{k=1}^{n-1} \la \bS^{\sym}_{\bs(u_1,t_2)}(X),(w_1\cdots w_k)\ra  \la  \bS^{\sym}_{(u_1,s_2)\bt}(X),(w_{k+1}...w_n)\ra \, ,
\end{align*}
where the last identity is a simple consequence of \eqref{eq:def symmetrized sig}. The proof of \eqref{eq:chen for symm} is now complete. 
\end{proof}


\subsection{Shuffle relations for the symmetrized 2D signature}

We have seen in Proposition \ref{prop:shuffle relation} that the 2D signature $\bS(X)$ satisfies a shuffle-type relation. The same is true for the symmetrized 2D signature.
\begin{prop}
\label{prop:2dsymmetrizedshuffle}
Under the same conditions as in Proposition \ref{prop:sym chen}, let $w \in \cW_n$ and $w' \in \cW_{n'}$ for $n,n'\geq 1$.
Then for all $(\bs,\bt)\in \Delta^2_{[0,T]^2}$ we have 
\begin{equation}
\label{eq:sym shuffle rel}
        \la\bS^{\sym}_{\bs\bt}(X),w\ra \la\bS^{\sym}_{\bs\bt}(X),w'\ra 
        = \sum_{\sigma \in \sh(n,n')} \la \bS^{\sym}_{\bs\bt}(X),[ww']_\sigma \ra 
        = \la\bS^{\sym}_{\bs\bt}(X),w \dsqcup w'\ra \, ,
\end{equation}
where $w \dsqcup\, w'$ is inductively defined in \eqref{eq:shuffle}. 
\end{prop}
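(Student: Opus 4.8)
The plan is to mimic the proof of Proposition \ref{prop:shuffle relation}, while exploiting the key structural simplification of the symmetrized signature: by \eqref{eq:rep of sym sig as monom} the second parameter direction is integrated over the full hypercube $[s_2,t_2]^n$, which is invariant under permutations of its coordinates. It is precisely this symmetry that collapses the permutation‑indexed double sum of Proposition \ref{prop:shuffle relation} into the ordinary scalar shuffle $\dsqcup$.

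First I would write both factors via the representation \eqref{eq:rep of sym sig as monom}, obtaining
\[
\la\bS^{\sym}_{\bs\bt}(X),w\ra\,\la\bS^{\sym}_{\bs\bt}(X),w'\ra
=\left(\int_{\Delta^n_{s_1,t_1}}\starI\int_{[s_2,t_2]^n}\right)\star\left(\int_{\Delta^{n'}_{s_1,t_1}}\starI\int_{[s_2,t_2]^{n'}}\right)\prod_{i=1}^{n}\dd X^{w_i}_{\br^i}\prod_{j=1}^{n'}\dd X^{w'_j}_{\br^{n+j}}.
\]
Applying the interchange law (Lemma \ref{lem:interchange}) regroups the two directions,
\[
=\left(\int_{\Delta^n_{s_1,t_1}}\star\int_{\Delta^{n'}_{s_1,t_1}}\right)\starI\left(\int_{[s_2,t_2]^n}\star\int_{[s_2,t_2]^{n'}}\right)\prod_{i=1}^{n+n'}\dd X^{\hw_i}_{\br^i},\qquad \hw=[ww'].
\]
In the second direction \eqref{b4} gives immediately $\int_{[s_2,t_2]^n}\star\int_{[s_2,t_2]^{n'}}=\int_{[s_2,t_2]^{n+n'}}$, \emph{with no permutation appearing} — this is the crucial point distinguishing the symmetrized case from the full one. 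In the first direction I would invoke Proposition \ref{prop:shuffle operators} (equivalently Lemma \ref{lem:prod of simplex}) with both permutations equal to the identity, yielding $\int_{\Delta^n_{s_1,t_1}}\star\int_{\Delta^{n'}_{s_1,t_1}}=\sum_{\sigma\in\sh(n,n')}\int_{\Delta^{n+n'}_{s_1,t_1}\sigma^{-1}}$.

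Combining the two displays, the product equals $\sum_{\sigma\in\sh(n,n')}\int_{\Delta^{n+n'}_{s_1,t_1}\sigma^{-1}}\starI\int_{[s_2,t_2]^{n+n'}}\prod_{i=1}^{n+n'}\dd X^{\hw_i}_{\br^i}$. For each fixed $\sigma$ I would then reorder the first coordinates into the standard simplex exactly as in \eqref{f2}: the substitution $r_1^i=p^{\sigma(i)}$ turns $\Delta^{n+n'}_{s_1,t_1}\sigma^{-1}$ into $\Delta^{n+n'}_{s_1,t_1}$. The essential observation is that, because the second coordinates range over the symmetric set $[s_2,t_2]^{n+n'}$, I may relabel them by the same permutation, $r_2^i=q^{\sigma(i)}$, at no cost, so the whole change of variables merely permutes the letters of $\hw$. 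Reindexing the product by $k=\sigma(i)$ then identifies the $\sigma$‑term, through \eqref{eq:rep of sym sig as monom} read backwards, with $\la\bS^{\sym}_{\bs\bt}(X),[ww']_{\sigma^{-1}}\ra$.

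Summing over $\sigma$ gives $\la\bS^{\sym}_{\bs\bt}(X),w\ra\la\bS^{\sym}_{\bs\bt}(X),w'\ra=\sum_{\sigma\in\sh(n,n')}\la\bS^{\sym}_{\bs\bt}(X),[ww']_{\sigma^{-1}}\ra$, which is precisely the permutation description of the shuffle product appearing in \eqref{eq:sym shuffle rel}; by linearity of the pairing $\la\bS^{\sym}_{\bs\bt}(X),\cdot\ra$ and the definition \eqref{eq:shuffle} of $\dsqcup$ on words this is exactly $\la\bS^{\sym}_{\bs\bt}(X),w\dsqcup w'\ra$, completing the chain of equalities. The main obstacle — and the step warranting the most care — is the simultaneous relabeling of both coordinates in the change of variables: one must verify that $[s_2,t_2]^{n+n'}$ is genuinely invariant under the coordinate permutation (no Jacobian, no orientation sign, and no residual ordering constraint), since it is exactly this hypercube symmetry that removes the second family of permutations $\lambda\in\sh(\nu,\nu')$ present in Proposition \ref{prop:shuffle relation} and reduces the 2D shuffle to the classical one. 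One should also keep careful track of the $\sigma$ versus $\sigma^{-1}$ bookkeeping so that the summation range matches the standard shuffle set $\sh(n,n')$.
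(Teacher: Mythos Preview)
Your approach is essentially identical to the paper's: write both factors via \eqref{eq:rep of sym sig as monom}, apply the interchange law (Lemma~\ref{lem:interchange}), and then invoke Proposition~\ref{prop:shuffle operators} on the first-direction simplexes while the second-direction hypercube factors trivially. The paper's proof stops at ``apply \Cref{prop:shuffle operators} to get the claimed result'', leaving implicit the change of variables from $\Delta^{n+n'}_{s_1,t_1}\sigma^{-1}$ back to the standard simplex that you spell out carefully; your added detail on the simultaneous relabeling in the $r_2$-coordinates (and the $\sigma$ versus $\sigma^{-1}$ bookkeeping) is exactly what is needed to finish that step, and your caution there is well placed.
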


\begin{proof}
For generic words $w \in \cW_n$ and $w'\in \cW_{n'}$, we use $\eqref{eq:rep of sym sig as monom}$ in order to write 
\begin{eqnarray*}
           \la\bS^{\sym}_{\bs\bt}(X),w\ra \la\bS^{\sym}_{\bs\bt}(X),w'\ra  = \lp \int_{\Delta^n_{s_1,t_1}}\starI \int_{[s_2,t_2]^n} \prod_{i=1}^n \dd X^{w_i}_{\br^i} \rp \lp \int_{\Delta^{n'}_{s_1,t_1}}\starI \int_{[s_2,t_2]^{n'}} \prod_{i=1}^{n'} \dd X^{w'_i}_{\br^i}\rp \, .
\end{eqnarray*}
    By \Cref{lem:interchange}
    \begin{equation}
          \la\bS^{\sym}_{\bs\bt}(X),w\ra \la\bS^{\sym}_{\bs\bt}(X),w'\ra  = \lp
          \left( \int_{\Delta^n_{s_1,t_1}} \star \int_{\Delta^{n'}_{s_1,t_1}} \right) \starI \int_{[s_2,t_2]^{n+n'}} \prod_{i=1}^{n+n'} \dd X^{[ww']_i}_{\br^i}\rp 
    \end{equation}
    Now apply \Cref{prop:shuffle operators}
    to get the claimed result.
\end{proof}


\section{The signature determines the field}
\label{sec:Funcapprox}


In this section we show that the 2D signature
of a  (time-enhanced) field determines the field.
%
Our proof is based on the Stone-Weierstrass theorem, and relies upon lifting the path to an extended path where the two `time' variables are included.
A similar trick is typically used in the classical universal approximation theorem for path signatures.
We note that after establishing the ``point-separating''
property of the signature, together with the shuffle relation derived in Proposition \ref{prop:shuffle relation},  one can deduce
that the linear functionals on the signature
are dense in an appropriate space of continuous functionals
on fields (see for example \cite[Section 10]{giusti2022topological}). For notational sake, the analysis in this section will be restricted to fields defined on $[0,1]^2$. the trivial extension to $[0,T]^2$ is left to the reader. 

We begin this section with a technical lemma that will be central in subsequent proofs. 
\begin{lem}
    \label{lem:Psi}
    Given $\delta \in (0,1), \eps > 0$ there exists
    a univariate polynomial $\Psi$ such that
    \begin{align*}
        \Psi(y) &\in [0,1], &&\text{for } y \in [0,1], \\
        |\Psi(y)| &< \eps, &&\text{for } y \in [0,\delta/2], \\
        |\Psi(y)-1| &< \eps, &&\text{for } y \in [\delta,1].
    \end{align*} 
\end{lem}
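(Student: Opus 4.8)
The plan is to reduce the statement to the Bernstein-polynomial approximation of a fixed continuous ``ramp'' function, exploiting the crucial fact that Bernstein polynomials automatically inherit the range constraint $\Psi([0,1])\subseteq[0,1]$. A generic Weierstrass approximation would give the two one-sided estimates but could overshoot outside $[0,1]$; the Bernstein construction is what makes the first requirement hold for free.

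First I would introduce the piecewise-linear target
\[
f(y) = \begin{cases} 0, & y\in[0,\delta/2],\\[2pt] \dfrac{2y-\delta}{\delta}, & y\in[\delta/2,\delta],\\[2pt] 1, & y\in[\delta,1], \end{cases}
\]
which is continuous on $[0,1]$ and satisfies $f([0,1])\subseteq[0,1]$, $f\equiv 0$ on $[0,\delta/2]$, and $f\equiv 1$ on $[\delta,1]$. This $f$ already meets the three requirements exactly; the only defect is that it is not a polynomial, and the entire task is to polynomialize it while keeping all three properties.

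Next I would pass to the Bernstein polynomial
\[
B_n(f)(y) = \sum_{k=0}^{n} f\!\left(\tfrac{k}{n}\right)\binom{n}{k} y^k (1-y)^{n-k}.
\]
The key structural observation is that the Bernstein basis functions $\binom{n}{k}y^k(1-y)^{n-k}$ are nonnegative on $[0,1]$ and sum to $1$, so $B_n(f)(y)$ is a convex combination of the values $f(k/n)\in[0,1]$; hence $B_n(f)(y)\in[0,1]$ for every $y\in[0,1]$ and every $n$. This yields the first requirement unconditionally and is precisely the reason for choosing Bernstein polynomials over an arbitrary approximation.

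Finally, since $f$ is continuous on the compact interval $[0,1]$, the classical uniform-convergence theorem for Bernstein polynomials gives $\norm{B_n(f)-f}_\infty\to 0$ as $n\to\infty$. I would fix $n$ large enough that $\norm{B_n(f)-f}_\infty<\eps$ and set $\Psi:=B_n(f)$. Then for $y\in[0,\delta/2]$ one has $|\Psi(y)|=|\Psi(y)-f(y)|<\eps$, and for $y\in[\delta,1]$ one has $|\Psi(y)-1|=|\Psi(y)-f(y)|<\eps$, which are exactly the remaining two requirements. There is no genuine obstacle beyond recalling the range-preservation of Bernstein polynomials; the one point to state carefully is the monotone ramp $f$ together with the convex-combination argument, so that uniform closeness to $f$ transfers verbatim into the two one-sided bounds while the global $[0,1]$ bound holds everywhere.
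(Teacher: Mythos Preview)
Your proof is correct. The Bernstein-polynomial argument works exactly as you describe: the convex-combination structure of the Bernstein basis guarantees the range constraint $\Psi([0,1])\subseteq[0,1]$ for free, and uniform convergence $B_n(f)\to f$ then delivers the two strict one-sided bounds.

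The paper takes a different, more explicit route, following Brosowski--Deutsch. It writes down the closed-form polynomial $\Phi_m(y)=(1-y^m)^{k^m}$ for a suitably chosen integer $k$ with $1<k\delta<2$, and uses Bernoulli's inequality to show directly that $\Phi_m$ is close to $1$ on $[0,\delta/2]$ and close to $0$ on $[\delta,1]$ for $m$ large; then $\Psi:=1-\Phi_m$. The advantage of the paper's construction is that it is entirely self-contained (only Bernoulli's inequality is used) and gives an explicit formula, which is in keeping with its later use inside a Stone--Weierstrass-type density argument where one may prefer not to invoke a separate approximation theorem. Your approach is cleaner and shorter, but imports the Bernstein approximation theorem as a black box; the range-preservation observation is the essential point that makes Bernstein polynomials the right tool rather than a generic Weierstrass approximant.
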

\begin{proof}
    The proof idea is from \cite{brosowski1981elementary}.
    We recall Bernoulli's inequality, 
    \begin{align*}
        (1 + x)^n \ge 1 + nx, \quad \text{for } x \ge -1, n \in \NN.
    \end{align*}
Pick $k \in \mathbb N_{\ge 1}$ such that the following relation holds
\begin{align*}
  1 < k\delta < 2.
\end{align*}
Given $m \in \mathbb N_{\ge 1}$ define the function 
\begin{align*}
  \Phi_m(y) := (1-y^m)^{k^m}.
\end{align*}
Then we see that 
\begin{align*}
  \Phi_m(y) \in [0,1] , \quad \textrm{for all}\quad  y \in [0,1].
\end{align*}
Furthermore, for any  $y \in [0,\frac\delta2]$,  using Bernoulli's inequality we see that 
\begin{align*}
  \Phi_m(y) \ge 1 - k^m y^m > 1 - \left(\frac{k\delta}2\right)^m.
\end{align*}
We observe that since $k\delta/2 < 1$, this tends to $1$
(independent of $y \in [0,\frac\delta2]$) as $m\rightarrow \infty$.
Now for  the case $y \in [\delta,1]$, it is readily checked that 
\begin{align*}
  \Phi_m(y)
  &= \frac{1}{k^m y^m} (1-y^m)^{k^m} k^m y^m \\
  &\le \frac{1}{k^m y^m} (1-y^m)^{k^m} (1 + k^m y^m) \\
  &\leq  \frac{1}{k^m y^m} (1-y^{2m})^{k^m}
  \le \frac{1}{k^m y^m}
  \le \frac{1}{(k\delta)^m}.
\end{align*}
Here we used Bernoulli's inequality for the second inequality.
We observe that since $k\delta > 1$, this tends to $0$
(uniformly in $y \in [\delta,1]$) as $m\rightarrow \infty$.
Combining the two cases, we see that we can choose $m$ large enough such
that $\Phi := \Phi_m$ satisfies, 
\begin{align}
  \label{h}  \Phi(y) &\in [0,1] \text{ for } y \in [0,1] \\
   \label{i} \Phi(y) &\in [1-\epsilon,1] \text{ for } y \in [0,\frac\delta2] \\
   \label{j} \Phi(y) &\in [0,\epsilon] \text{ for } y \in [\delta,1].
\end{align}
Then $\Psi := 1- \Phi$ satisfies the required bounds.
\end{proof}

The next step is to consider a family of functions that is dense in the space of continuous functions on $[0,1]^2$ with zero boundary. To this end, 
let $\mathcal{F}$ denote the $\RR$-algebra spanned by the functions,
\begin{align}\label{eq: def of cF}
    (s_1,s_2) \mapsto s_1^{2m + n} s_2^{2n + m}, \quad m, n \ge 0.
\end{align}
With this definition at hand, we prove another technical lemma that will allow us to prove that $ \cF$ is a dense subset of $C([0,T]^2;\RR)$ with zero boundary. 

\begin{lem}
    \label{lem:g}
    For every $b > 0$ small enough, there exists a  $ b' \in (0, b)$
    such that for all $a > 0$ there is a function
    $g_{a,b',b} \in \mathcal{F}$ such that
    \begin{align}
       \label{eq:gnorm} \|g\| &\le 1, && \\
        \label{eq:gnorm 2}|g(s_1,s_2)| &< a, \qquad &&\text{for } s_1 \in [0,b'] \text{ or } s_2 \in [0,b'], \\
        \label{eq:gnorm 3}|g(s_1,s_2)-1| &< a, \qquad &&\text{for } s_1,s_2 \in [b,1], 
    \end{align}
    where $\|g\|$ stands for the $\sup$ norm in $[0,1]^2$. 
\end{lem}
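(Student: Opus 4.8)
The plan is to reduce the construction of $g$ to the univariate approximation already established in \Cref{lem:Psi}, by feeding a suitable \emph{single} element of the algebra $\cF$ into the polynomial $\Psi$. The key observation is that $\cF$ is an $\RR$-algebra containing the constants (take $m=n=0$ in \eqref{eq: def of cF}), so that for any univariate polynomial $P$ and any $h\in\cF$ one automatically has $P(h)\in\cF$. Hence it suffices to exhibit one $h\in\cF$ that is small near the boundary $\{s_1=0\}\cup\{s_2=0\}$ and bounded away from zero on $[b,1]^2$, and then to compose it with an appropriately tuned $\Psi$.

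First I would choose $h(s_1,s_2):=(s_1s_2)^3=s_1^3 s_2^3$. This is exactly the generator of $\cF$ corresponding to $m=n=1$ in \eqref{eq: def of cF} (solving $2m+n=3$, $2n+m=3$ forces $m=n=1$), so $h\in\cF$, and clearly $h$ maps $[0,1]^2$ into $[0,1]$. The relevant elementary bounds are: if $s_1\le b'$ or $s_2\le b'$ then $h\le (b')^3$, while if $s_1,s_2\in[b,1]$ then $b^6\le h\le 1$.

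Next I would fix the parameters so that these two regimes land in the two good regions of $\Psi$. Taking $\delta:=b^6$ (which lies in $(0,1)$ as soon as $b\in(0,1)$, the meaning of ``$b$ small enough'') and $b':=b^2/2$, one checks $b'\in(0,b)$ and $(b')^3=b^6/8<b^6/2=\delta/2$; crucially $b'$ depends only on $b$, not on $a$, as required by the statement. Given $a>0$, I would then apply \Cref{lem:Psi} with this $\delta$ and $\eps:=a$ to produce $\Psi$, and set $g:=\Psi(h)\in\cF$.

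Finally I would verify the three claimed bounds directly. Property \eqref{eq:gnorm} follows from $h([0,1]^2)\subseteq[0,1]$ together with $\Psi([0,1])\subseteq[0,1]$. For \eqref{eq:gnorm 2}, when $s_1\in[0,b']$ or $s_2\in[0,b']$ we have $h\le(b')^3<\delta/2$, so $|g|=|\Psi(h)|<a$. For \eqref{eq:gnorm 3}, when $s_1,s_2\in[b,1]$ we have $h\in[\delta,1]$, so $|g-1|=|\Psi(h)-1|<a$. The only genuinely non-mechanical point is the very first one: realizing that the correct object to plug into $\Psi$ is the single monomial $(s_1s_2)^3$, which is simultaneously a bona fide element of $\cF$ (thanks to the symmetric exponent pattern $2m+n,\,2n+m$ with $m=n=1$) and a faithful detector of the corner region $[b,1]^2$. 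Once this choice is in hand, everything else is parameter bookkeeping.
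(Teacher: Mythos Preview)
Your proof is correct and follows the same strategy as the paper: feed a monomial from $\mathcal{F}$ that detects the corner region into the polynomial $\Psi$ of \Cref{lem:Psi}, then read off the three bounds. The paper uses $P(s_1,s_2)=s_1 s_2$ (with $\delta=b^2$) rather than your $(s_1 s_2)^3$; your choice is in fact more careful, since $s_1 s_2$ does not obviously lie in $\mathcal{F}$ (the exponent sum of every monomial generator $s_1^{2m+n}s_2^{2n+m}$ is a multiple of~$3$), whereas $(s_1 s_2)^3$ is exactly the generator with $m=n=1$.
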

\begin{proof}
    Consider the function 
    \begin{align*}
        P(s_1,s_2) := s_1 s_2 \in \mathcal{F}.
    \end{align*}
    Then $P(s_1,s_2) \in [0,1]$ for $s_1,s_2 \in [0,1]$.
    For the given $b$ , let $\delta$ be the number defined by  
    \begin{align*}
        \delta := \inf_{s_1,s_2 \in [b,1]} P(s_1,s_2), 
    \end{align*}
    and observe that $\delta = b^2$. Now pick  $b' < b$ such that the following relation holds
    \begin{align*}
        \sup_{s_1,s_2 \in ([0,1]\times [0,b']) \cup ([0,b']\times [0,1])} P(s_1,s_2) < \delta/2.
    \end{align*}
Next we consider a polynomial $\Phi$ satisfying the requirements of \Cref{lem:Psi}, for    $\eps = a$. We let  $g := \Phi\circ P $. It is readily checked from the definition of the space proposed in \eqref{eq: def of cF} that  $g\in \cF$. Therefore, relations \eqref{h}-\eqref{j} translate into 
\begin{align*}
    &|g(s_1,s_2)|\in [0,1], \quad \forall \, (s_1,s_2)\in [0,1]^2
    \\
        &|g(s_1,s_2)|\leq a, \quad \forall \, (s_1,s_2)\in ([0,1]\times [0,b']) \cup ([0,b']\times [0,1])
        \\
                &|1-g(s_1,s_2)|\leq a, \quad \forall \, (s_1,s_2)\in [0,1]^2
\end{align*}
   Otherwise stated, our claims \eqref{eq:gnorm}-\eqref{eq:gnorm 3} are proved. 
\end{proof}

\begin{lem}
    \label{lem:dense}
    Let $\cF$ be the vector space defined by \eqref{eq: def of cF}. Then
    $\mathcal{F}$ is dense in
    the subspace
    \begin{align*}
        V := \{ f \in C([0,1]^2, \RR) \mid f(0,s_2) = f(s_1,0) = 0, s_1,s_2 \in [0,1] \},
    \end{align*}
    of continuous functions vanishing at the ``left-bottom'' boundaries.
\end{lem}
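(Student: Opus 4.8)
The plan is to recognize \Cref{lem:dense} as an instance of the Stone--Weierstrass theorem, while carefully accounting for the degeneracy of $\mathcal{F}$ along the left-bottom boundary. Write $B := (\{0\}\times[0,1]) \cup ([0,1]\times\{0\})$ for that boundary. Two structural facts drive everything. First, $\mathcal{F}$ is exactly the unital polynomial algebra $\RR[u,v]$ generated by $u(s_1,s_2)=s_1^2 s_2$ and $v(s_1,s_2)=s_1 s_2^2$, since every spanning monomial $s_1^{2m+n}s_2^{2n+m}$ equals $u^m v^n$ and the case $m=n=0$ supplies the constants. Second, the pair $(u,v)$ separates the points of $[0,1]^2$ lying off $B$ and separates each such point from $B$: if $s_1,s_2,s_1',s_2'>0$ and $u,v$ take equal values at $(s_1,s_2)$ and $(s_1',s_2')$, then multiplying the two equalities gives $(s_1 s_2)^3=(s_1' s_2')^3$, hence $s_1 s_2 = s_1' s_2'$, and dividing the $u$-equality by this forces $s_1=s_1'$ and $s_2=s_2'$; whereas both $u$ and $v$ vanish identically on $B$. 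Thus every element of $\mathcal{F}$ is constant on $B$, and $\mathcal{F}$ separates everything else.

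The clean conceptual route is to pass to the compact Hausdorff quotient $Y := [0,1]^2/B$ obtained by collapsing $B$ to a single point $\ast$. Under the identification $C(Y)\cong\{f\in C([0,1]^2): f|_B \text{ is constant}\}$, the facts above say that $\mathcal{F}$ is a unital point-separating subalgebra of $C(Y)$, so the classical Stone--Weierstrass theorem gives $\overline{\mathcal{F}}=C(Y)$ in the sup norm. Since every $f\in V$ vanishes on $B$, it lies in $C(Y)$, and hence $f\in\overline{\mathcal{F}}$; this is precisely the claimed density.

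To keep the argument self-contained, and to genuinely use Lemmas~\ref{lem:Psi} and~\ref{lem:g}, I would instead run the elementary Brosowski--Deutsch proof of Stone--Weierstrass directly in this setting. The engine is the construction of approximate indicators inside $\mathcal{F}$: given $x_0=(a_1,a_2)$ with $a_1,a_2>0$ and a closed $K\subset[0,1]^2$ with $x_0\notin K$, the function $h:=(u-u(x_0))^2+(v-v(x_0))^2\in\mathcal{F}$ vanishes only at $x_0$, so $h\ge\delta>0$ on $K$; composing the normalized $h$ with the polynomial $\Psi$ of \Cref{lem:Psi} produces $g\in\mathcal{F}$ with $0\le g\le 1$, equal to approximately $1$ at $x_0$ and approximately $0$ on $K$. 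The boundary itself is handled by \Cref{lem:g}, whose $g_{a,b',b}$ is the corresponding bump that is near $0$ along $B$ and near $1$ on the bulk $[b,1]^2$. Given $f\in V$ and $\eps>0$, one assembles these bumps in the standard Brosowski--Deutsch manner: partition the range of $f$ into $O(1/\eps)$ slabs, take approximate indicators of the super-level sets, and form the resulting finite nonnegative combination in $\mathcal{F}$; the usual estimates then place it within a fixed multiple of $\eps$ of $f$ uniformly.

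The main obstacle, and the reason one cannot simply invoke the classical Weierstrass theorem on monomials, is that $\mathcal{F}$ is a \emph{proper} subalgebra of the polynomials that is forced to be constant along $B$, so two requirements must be met at once: genuine point separation on the bulk (provided by $u$ and $v$) and vanishing of the approximants along $B$ to match $f\in V$ (provided by \Cref{lem:g}). A secondary technical point is that $\mathcal{F}$ is an algebra but not a lattice, so $\min/\max$ constructions are unavailable; the approximation must therefore be built from the polynomial-composition bumps and the Brosowski--Deutsch summation scheme rather than from lattice operations.
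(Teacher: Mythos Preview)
Your proposal is correct, and your quotient-space route is a genuinely different (and cleaner) argument than the paper's. The paper does \emph{not} collapse the boundary $B$; instead it applies the classical Stone--Weierstrass theorem only on the sub-square $[b',1]^2$ (where $\mathcal F$ separates points without issue) to get an approximant $f_{\eps,b'}$, and then multiplies by the bump $g_{\eps/M,b',b}$ of \Cref{lem:g} to tame $f_{\eps,b'}$ in the strip near $B$, checking three regions by hand. Your route recognizes that $\mathcal F=\RR[u,v]$ descends to a unital point-separating subalgebra of $C([0,1]^2/B)$ and invokes Stone--Weierstrass once on that compact Hausdorff quotient; this dispenses with \Cref{lem:Psi} and \Cref{lem:g} altogether. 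What the paper's argument buys is that it stays entirely on $[0,1]^2$ and makes the role of the auxiliary lemmas explicit, which is consistent with their placement in the text; what your argument buys is brevity and the avoidance of the three-region case analysis. Your second sketched route (running Brosowski--Deutsch in full) is valid but is not what the paper does either: the paper uses Stone--Weierstrass as a black box on $[b',1]^2$ rather than rebuilding it from approximate indicators.
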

\begin{proof}
The strategy for the proof is a subtle variation on classical Stone-Weierstrass type arguments. Namely on every sub-rectangle $[b',1]$, Stone-Weierstrass theorem will give the desired conclusion. Then close to the axes we shall resort to a polynomial interpolation given by \Cref{lem:g}. 

    First let us prove that $\mathcal{F}$ separates points in $(0,1]^2$.
    Indeed, 
    if
    \begin{equation*}
    s_1^{2} s_2     = q_1^{2} q_2 ,    \qquad
    s_1     s_2^{2} = q_1     q_2^{2},
    \end{equation*}
    and $s_1, s_2, q_1, q_2 \in (0,1)$ then
    \begin{align*}
    \frac{s_1}{q_1} = \frac{q_2^2}{s_2^2} = \frac{s_1^4}{q_1^4},
    \end{align*}
    hence
    \begin{align*}
        s_1 = q_1, \quad s_2 = q_2.
    \end{align*}
    Furthermore, the constant function is contained in $\mathcal{F}$
    and it is closed under multiplication.
    Hence, by the Stone-Weierstrass theorem,
    the functions are dense in $C([\epsilon,1]^2, \RR)$,
    for every $\epsilon > 0$.

    We now start our interpolation argument close to the axes. That is consider $f \in V$ and $\eps > 0$ given.
    Choose $\delta > 0$ such that
    \begin{align}\label{eq:ineq f}
        |f(s_1,s_2)| < \eps, \quad \text{for } s_1\in [0,\delta] \text{ or } s_2 \in [0,\delta].
    \end{align}
    Let $b' < b$ be given as in 
    \Cref{lem:g},
    for $b := \delta$.
    Denote $\mathcal{F}_{b'}$ the restriction of $\mathcal{F}$ to $[b',1]^2$.
    By the classical Stone-Weierstrass theorem,
    there exists $\hat f_{\eps,b'} \in \mathcal{F}_{b'}$ such that
    \begin{align}\label{p}
        ||f - \hat f_{\eps,b'}||_{ C([b',1]^2, \RR) } < \eps. 
    \end{align}
    Fix an extension $f_{\eps,b'} \in \mathcal{F}$ of $\hat f_{\eps,b'}$ to $[0,1]^2$.
    Define
    \begin{align*}
        M := \sup_{s_1,s_2 \in [0,b']} |f_{\eps,b'}(s_1,s_2)| + 1.
    \end{align*}
    Consider $g_{\eps/M,b',b} \in \mathcal{F}$ as in \Cref{lem:g}.
    Then, $h := f_{\eps,\delta} \cdot g_{\eps/M,b',b} \in \mathcal{F}$ satisfies the following three properties: 
\begin{enumerate}[wide, labelwidth=!, labelindent=0pt, label= \textbf{(\roman*)}]
\setlength\itemsep{.05in}
        \item 
        For  $s_1 \in [0,b'] $ or $s_2 \in [0,b']$:
        \begin{align*}
            |h(s_1,s_2)|
            &=
            |f_{\eps,\delta}(s_1,s_2)| \cdot |g_{\eps/M,b',b}(s_1,s_2)| \\
            &\le
            M \cdot \eps / M = \eps.
        \end{align*}
        Since $f$ verifies \eqref{eq:ineq f} for $s_1\in [0,b']$ or $s_2\in [0,b']$, we get that $|h(s_1,s_2)-f(s_1,s_2)|\leq 2\epsilon$. 
        \item
        For $s_1 \in [b',b], s_2 \in [b',1]$ or $s_2 \in [b',b], s_1 \in [b',1]$
        \begin{align*}
            |h(s_1,s_2)|
            &=
            |f_{\eps,\delta}(s_1,s_2)| \cdot |g_{\eps/M,b',b}(s_1,s_2)| \\
            &\le
            2 \eps \cdot ||g_{\eps/M,b',b}||_\infty 
            \le 2 \eps.
        \end{align*}
        
        \item
        For $s_1,s_2 \in [\delta,1]$
        \begin{align*}
            &|h(s_1,s_2) - f(s_1,s_2)| \\
            &\quad\le
            |(f_{\eps,\delta}(s_1,s_2) - f(s_1,s_2)) \cdot g_{\eps/M,b',b}|
            +
            |f(s_1,s_2) (g_{\eps/M,b',b} - 1)| && \\
            &\quad\le
            \eps + ||f|| \eps/M.
        \end{align*}
    \end{enumerate}
    Now $\eps$ was arbitrary (and $M \ge 1$), and so it follows that  $\mathcal{F}$ is dense in $V$.
\end{proof}

We now turn to the main result of the section, which states that an extended version of the signature characterizes a smooth field up to an equivalent class.

\begin{thm}
Consider the following equivalence relation
on $C^2([0,T]^2;\RR^d)$:
\begin{align}\label{eq:sim rel}
    X \sim Y \iff \partial_{12} X = \partial_{12} Y.
\end{align}
    For $X \in C^2([0,1]^2;\RR^d)$ consider the ``lift'' to
    $\hat X \in  C^2([0,1]^2;\RR^{2+d})$
    \begin{align*}
        \hat X_{s_1,s_2} := (s_1^2 s_2, s_1 s_2^2, X_{s_1,s_2}).
    \end{align*}
    Then, for $X, Y \in C^2([0,1]^2;\RR^d)$ we have $X \sim Y$ if and only if
    \begin{align*}
        \bS_{\boo,\mathbf{1}}(\hat X) = \bS_{\boo,\mathbf{1}}(\hat Y).
    \end{align*}
\end{thm}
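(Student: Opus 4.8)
The plan is to prove the two implications separately. For the forward implication, I would first observe from Definition~\ref{def:extended Sig} and Notation~\ref{not:2d-diff} that every coordinate $\langle\bS_{\bs\bt}(Z),(w,\nu)\rangle$ depends on a field $Z$ only through the mixed derivatives $\partial_{12}Z^{c}$ of its components. For the lift one computes $\partial_{12}\hat X^{1}=2s_{1}$ and $\partial_{12}\hat X^{2}=2s_{2}$ (fixed, independent of $X$), while $\partial_{12}\hat X^{j+2}=\partial_{12}X^{j}$. Hence $X\sim Y$, i.e.\ $\partial_{12}X=\partial_{12}Y$, forces $\partial_{12}\hat X=\partial_{12}\hat Y$ componentwise, and therefore $\bS_{\boo\mathbf{1}}(\hat X)=\bS_{\boo\mathbf{1}}(\hat Y)$.

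For the reverse implication, assume $\bS_{\boo\mathbf{1}}(\hat X)=\bS_{\boo\mathbf{1}}(\hat Y)$; the goal is $\partial_{12}X^{j}=\partial_{12}Y^{j}$ for every $j$. The strategy is to recover from the signature the functionals $\phi\mapsto\int_{[0,1]^{2}}\phi\,\partial_{12}X^{j}$ as $\phi$ ranges over the algebra $\mathcal{F}$ from \eqref{eq: def of cF}. The decisive feature of the lift is that the two time components vanish on the bottom-left boundary, so their rectangular increments are pure monomials: $\square_{\boo\bv}\hat X^{1}=v_{1}^{2}v_{2}$ and $\square_{\boo\bv}\hat X^{2}=v_{1}v_{2}^{2}$, whence $(\square_{\boo\bv}\hat X^{1})^{m}(\square_{\boo\bv}\hat X^{2})^{n}=v_{1}^{2m+n}v_{2}^{2n+m}$ is exactly a generator of $\mathcal{F}$. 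Since $\square_{\boo\bv}\hat X^{1}=\langle\bS_{\boo\bv}(\hat X),((1),\id)\rangle$ and similarly for the second component, the shuffle relation of Proposition~\ref{prop:shuffle relation} writes each such monomial as a linear combination $\sum_{(w,\nu)}c_{(w,\nu)}\langle\bS_{\boo\bv}(\hat X),(w,\nu)\rangle$ with coefficients $c_{(w,\nu)}$ that are combinatorial, hence independent of the field, and with all words $w$ built from the letters $1,2$ only.

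The next and central step is the identity
\begin{equation*}
\int_{[\boo,\mathbf{1}]}\langle\bS_{\boo\bv}(\hat X),(w,\nu)\rangle\,\dd^{j+2}\hat X_{\bv}=\langle\bS_{\boo\mathbf{1}}(\hat X),\big((w_{1},\dots,w_{q},j+2),\hat\nu\big)\rangle,\qquad q=|w|,
\end{equation*}
which follows by writing out Definition~\ref{def:extended Sig}: appending $\bv$ with $v_{1}\ge r_{1}^{i}$ and $v_{2}\ge r_{2}^{i}$ inserts $\bv$ as the top variable in both coordinates, so $\hat\nu$ is simply $\nu$ extended by $\hat\nu(q+1)=q+1$. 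Feeding the shuffle expansion of the previous paragraph into this identity gives, for every generator $\phi=v_{1}^{2m+n}v_{2}^{2n+m}$ of $\mathcal{F}$,
\begin{equation*}
\int_{[0,1]^{2}}\phi(\bv)\,\partial_{12}X^{j}(\bv)\,\dd\bv=\sum_{(w,\nu)}c_{(w,\nu)}\,\langle\bS_{\boo\mathbf{1}}(\hat X),\big((w_{1},\dots,w_{q},j+2),\hat\nu\big)\rangle,
\end{equation*}
and the identical expansion holds for $\hat Y$ with the same $c_{(w,\nu)}$. Since $\bS(\hat X)=\bS(\hat Y)$, we obtain $\int\phi\,\partial_{12}X^{j}=\int\phi\,\partial_{12}Y^{j}$ for all generators, hence by linearity for all $\phi\in\mathcal{F}$.

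To finish, Lemma~\ref{lem:dense} gives that $\mathcal{F}$ is dense in $V$ for the sup norm; since $\phi\mapsto\int\phi\,\partial_{12}X^{j}$ is sup-norm continuous, the equality extends to all $\phi\in V$. Writing $h:=\partial_{12}X^{j}-\partial_{12}Y^{j}$ and testing against $\phi=h\chi_{n}\in V$, where $\chi_{n}$ is a continuous cutoff vanishing on a shrinking neighborhood of the two axes and increasing to $1$ elsewhere, dominated convergence yields $\int_{[0,1]^{2}}h^{2}=0$, hence $h\equiv0$; as $j$ is arbitrary this proves $X\sim Y$. The step I expect to be most delicate is exactly this last passage through the boundary: because the separating generators $s_{1}^{2}s_{2},s_{1}s_{2}^{2}$ vanish on the bottom-left boundary, $\mathcal{F}$ is only dense in $V$ rather than in all of $C([0,1]^{2})$, so one must check that testing against $V$ still determines $\partial_{12}X^{j}$ — which is legitimate precisely because the omitted boundary has Lebesgue measure zero and is absorbed by $\chi_{n}$. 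The remaining computations (the append-a-letter identity and the field-independence of the shuffle coefficients) are routine once the setup is in place.
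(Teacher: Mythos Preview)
Your proof is correct and follows essentially the same route as the paper's: both exploit that $\square_{\boo\bv}\hat X^{1}=v_{1}^{2}v_{2}$ and $\square_{\boo\bv}\hat X^{2}=v_{1}v_{2}^{2}$, use the shuffle relation to generate the algebra $\mathcal{F}$, append the letter $j+2$ to produce $\int_{[0,1]^{2}}\phi\,\partial_{12}X^{j}$ as a signature coefficient, and invoke Lemma~\ref{lem:dense} for density in $V$. The only substantive difference is the final identification step: the paper concludes via an approximation-of-the-Dirac-delta argument on $(0,1]^{2}$ followed by continuity, whereas you test against $h\chi_{n}\in V$ and pass to the limit by dominated convergence---your version is slightly more self-contained, but both are valid.
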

\begin{proof}
    It suffices to show the case $d=1$, i.e.
    \begin{align*}
        \hat X_{s_1,s_2} = (\hat X_{s_1,s_2}^{(1)}, \hat X_{s_1,s_2}^{(2)}, \hat X_{s_1,s_2}^{(3)}) = (s_1^2 s_2, s_1 s_2^2, X_{s_1,s_2}) \in \RR^3.
    \end{align*}
   It is clear that 
    \begin{align*}
        \la \bS_{\boo \bs}(\hat X), 1 \ra 
        =
        s_1^2 s_2, \quad \mathrm{and } \quad 
        \la \bS_{\boo \bs}(\hat X), 2 \ra
        =
        s_1 s_2^2.
    \end{align*}
    Then, by the shuffle relation it follows that 
    \begin{align*}
        \la \bS_{\boo \bs}(\hat X), 1^{\shuffle m} \shuffle 2^{\shuffle n} \ra
        =
        s_1^{2m + n} s_2^{2n + m}.
    \end{align*}
    Hence, by \Cref{lem:dense}, for every $f\in\mathcal F$, $\eps > 0$ there exists
    $\phi_\eps \in T(\RR^d)$ such that with
    \begin{align*}
        \Phi_\eps(s_1,s_2) := 
        \la \bS_{\boo \bs}(\hat X), \phi_\eps \ra,
    \end{align*}
    we have $||f - \Phi_\eps|| < \eps$.
    Note that 
    \begin{align*}
        \la \bS_{\boo \mathbf{1}}(\hat X), (1^{\shuffle m} \shuffle 2^{\shuffle n}) 3 \ra
        =
        \int_0^1 ds_1 \int_0^1 ds_2 \ s_1^{2m + n} s_2^{2n + m} \partial_{12} X_{s_1,s_2}.
    \end{align*}
    If $\bS_{\boo \mathbf{1}}(\hat X) = \bS_{\boo \mathbf{1}}(\hat Y)$ then,
    for every $f \in \mathcal F$ we have 
    \begin{align*}
        \int_0^1 ds_1 \int_0^1 ds_2 \ f(s_1,s_2) \partial_{12} X_{s_1,s_2}
        =
        \int_0^1 ds_1 \int_0^1 ds_2 \ f(s_1,s_2) \partial_{12} Y_{s_1,s_2}.
    \end{align*}
    By the usual approximation of the Dirac delta we hence get
    \begin{align*}
        \partial_{12} X(s_1,s_2) = \partial_{12} Y(s_1,s_2), \quad \text{for } s_1,s_2 \in (0,1].
    \end{align*}
    By continuity of $\partial_{12} X, \partial_{12} Y$,
    we hence get
    \begin{align*}
        \partial_{12} X = \partial_{12} Y.
    \end{align*}
We have thus proved that whenever $\bS_{\mathbf{0}\mathbf{1}}(\tilde{X})=\bS_{\mathbf{0}\mathbf{1}}(\tilde{Y})$ we have $\tilde{X}\sim \tilde{Y}$ according to \eqref{eq:sim rel}. The other implication being trivial, this finishes the proof. 
\end{proof}

\section{Concluding remarks and open problems}
\label{sec:conclussion}

We have proposed several 2D extensions of the path signature arising in the theory of rough paths \cite{LCL}. In contrast to the multiparameter extensions proposed by \cite{giusti2022topological,lee2023random}, the 2D-signatures we consider here are constructed over fields $X:[0,T]^2 \rightarrow \RR^d $, using the mixed partial derivative $\frac{\partial^2 }{\partial t_1 \partial t_2}X$  of the field as the driving signal. As such we have invariance to path perturbations
(\Cref{ss:translate_invariance}),
and thus the 2D-signatures may serve as a complement to the classical path signature when working with image data. We show that our (full) signature satisfies shuffle relations (\Cref{prop:shuffle relation}), and the $\id$-signature satisfies a type of Chen relation based on convolutional products (\Cref{Prop:Horizontal and vertical chen}).
%
As the current paper only deals with sufficiently smooth, i.e., twice continuously differentiable fields $X$, we have not considered rough path-type questions, such as an extension theorem for the signature, or investigations of probabilistic aspects of the 2D signature constructed from random fields. This is a direction of work which we are currently investigating. 
 
\bibliographystyle{plain}

\end{document}